\documentclass{amsart}
\usepackage{amssymb}
\usepackage{mathdots}
\usepackage[...]{youngtab}
\usepackage{tikz}
\usetikzlibrary{shapes,snakes}
\usepackage{framed}
\usepackage{tikz-cd}
\usepackage{amsmath}
\usepackage{txfonts}
\usepackage{booktabs}
\usepackage{listings}

\input xy
\xyoption{all}

\usepackage{amsfonts}
\usepackage{mathrsfs}
\usepackage{adjustbox}

\usepackage{mathrsfs}

\usepackage{latexsym}
\usepackage{graphicx}
\usepackage{amscd,amssymb,amsmath,amsbsy,amsthm}
\usepackage[all]{xy}

\usepackage{xcolor}
\definecolor{darkred}{RGB}{139,0,0}
\usepackage{hyperref}
\hypersetup{colorlinks,linkcolor={darkred},citecolor={darkred},urlcolor={darkred}} 
\usepackage{verbatim}
%\usepackage{showkeys}

%-------------------------------------------------------------------------------
\usepackage{tikz-cd}
\usepackage{amsmath}
\usepackage{amsfonts}
\usepackage{amssymb}
\usepackage{pdfpages}
\usepackage{tikz}
\usepackage{asymptote}
\usetikzlibrary{decorations.markings,arrows}
\usepackage[all]{xy}
\usepackage{yfonts}
\usepackage[...]{youngtab}

\usepackage{mathrsfs}
\usepackage{latexsym}
\usepackage{graphicx}
\usepackage{amscd,amssymb,amsmath,amsbsy,amsthm}
\usepackage[all]{xy}

\newsavebox{\boxedtikzcdbox}
%----------------------------------------------------------------------------------------

\topmargin=0.1in
\textwidth=5.26in
\textheight=7.84in

\newtheorem{theorem}{Theorem}[section]

\newtheorem{prop}[theorem]{Proposition}

\theoremstyle{definition}
\newtheorem{defn}[theorem]{Definition}
\newtheorem{ex}[theorem]{Example}
\newtheorem{remark}[theorem]{Remark}
\newtheorem{recall}[theorem]{Recall}

%----------------------------------------------------------------------------------------

\DeclareMathOperator{\Gr}{\mathbf{Gr}}
\DeclareMathOperator{\id}{\mathbf{id}}
\DeclareMathOperator{\Mat}{\mathbf{Mat}}

\DeclareMathOperator{\G}{\mathbf{G}}
\DeclareMathOperator{\Fitt}{\mathbf{Fitt}}

\DeclareMathOperator{\FF}{\mathbb{F}}
\DeclareMathOperator{\NN}{\mathbb{N}}
\DeclareMathOperator{\ZZ}{\mathbb{Z}}
\DeclareMathOperator{\QQ}{\mathbb{Q}}
\DeclareMathOperator{\RR}{\mathbb{R}}
\DeclareMathOperator{\CC}{\mathbb{C}}
\DeclareMathOperator{\PP}{\mathbb{P}}
\DeclareMathOperator{\KK}{\mathbb{K}}

\DeclareMathOperator{\HH}{\mathbb{H}}
\DeclareMathOperator{\im}{im}

\DeclareMathOperator{\rep}{\mathbf{rep}}

\DeclareMathOperator{\cO}{\mathcal{O}}

\DeclareMathOperator{\lcm}{\mathbf{lcm}}

\DeclareMathOperator{\ttop}{\mathbf{top}}

\DeclareMathOperator{\fm}{\mathfrak{m}}
\DeclareMathOperator{\bd}{\mathbf{d}}

\DeclareMathOperator{\SL}{\mathbf{SL}}
\DeclareMathOperator{\GL}{\mathbf{GL}}

\DeclareMathOperator{\Spec}{\mathbf{Spec}}
\DeclareMathOperator{\Proj}{\mathbf{Proj}}
\DeclareMathOperator{\rank}{\mathbf{rank}}
\DeclareMathOperator{\gr}{\mathbf{\mathfrak{gr}}}

\newcommand{\cV}{\mathcal{V}}

\newcommand{\N}{\mathbb{N}}

\newcommand{\A}{\mathbb{A}}
\newcommand{\bk}{\mathbb{K}}
\newcommand{\scO}{\mathscr{O}}

\DeclareMathOperator{\Frac}{\mathbf{Frac}}
\DeclareMathOperator{\Hom}{\mathbf{Hom}}

\DeclareMathOperator{\init}{\mathbf{in}}

\DeclareMathOperator{\sgn}{\mathbf{sign}}

\DeclareMathOperator{\Sym}{\mathcal{\mathbf{Sym}}}
\DeclareMathOperator{\cF}{\mathcal{\mathbf{F}}}

\DeclareMathOperator{\fR}{\mathfrak{R}}

\DeclareMathOperator{\fp}{\mathfrak{p}}
\DeclareMathOperator{\fa}{\mathfrak{a}}

\newcommand{\la}{\langle}
\newcommand{\ra}{\rangle}
%----------------------------------------------------------------------------------------

\newcount\cols
{\catcode`,=\active\catcode`|=\active
	\gdef\Young(#1){\hbox{$\vcenter
			{\mathcode`,="8000\mathcode`|="8000
				\def,{\global\advance\cols by 1 &}%
				\def|{\cr
					\multispan{\the\cols}\hrulefill\cr
					&\global\cols=2 }%
				\offinterlineskip\everycr{}\tabskip=0pt
				\dimen0=\ht\strutbox \advance\dimen0 by \dp\strutbox
				\halign
				{\vrule height \ht\strutbox depth \dp\strutbox##
					&&\hbox to \dimen0{\hss$##$\hss}\vrule\cr
					\noalign{\hrule}&\global\cols=2 #1\crcr
					\multispan{\the\cols}\hrulefill\cr%
				}
			}$}}
}

\title[Multiparameter Persistent Homology: Generic Structures and Quantum Computing]
{Multiparameter Persistent Homology: Generic Structures and Quantum Computing}

\author[A.~Schreiber]{Amelie~Schreiber}
\email{amelie.schreiber.math@gmail.com}

\subjclass[2020]{
	Primary
	13P25  	%Applications of commutative algebra (e.g., to statistics, control theory, optimization, etc.)
	13P10  	%Gröbner bases; other bases for ideals and modules (e.g., Janet and border bases)
    13P20  	%Computational homological algebra [See also 13Dxx]
	Secondary
	05E40  	%Combinatorial aspects of commutative algebra
	13A30  	%Associated graded rings of ideals (Rees ring, form ring), analytic spread and related topics
}

\date{\today}
\keywords{persistent homology, multiparameter persistence, topological data analysis, generic free resolutions, module varieties, quadratic Hamiltonians}

\setcounter{tocdepth}{1}

\begin{document}
	
	\begin{abstract}
	The following article is an application of commutative algebra to the study of multiparameter persistent homology in topological data analysis. In particular, the theory of finite free resolutions of modules over polynomial rings is applied to multiparameter persistent modules. The generic structure of such resolutions and the classifying spaces involved are studied using results spanning several decades of research in commutative algebra, beginning with the study of generic structural properties of free resolutions popularized by Buchsbaum and Eisenbud. Many explicit computations are presented using the computer algebra package Macaulay2, along with the code used for computations. This paper serves as a collection of theoretical results from commutative algebra which will be necessary as a foundation in the future use of computational methods using Gröbner bases, standard monomial theories, Young tableaux, Schur functors and Schur polynomials, and the classical representation theory and invariant theory involved in linear algebraic group actions. The methods used are in general characteristic free and are designed to work over the ring of integers in order to be useful for applications and computations in data science. As an applications we explain how one could apply 2-parameter persistent homology to study time-varying interactions graphs associated to quadratic Hamiltonians such as those in the Ising model or Kitaev's torus code and other surface codes.
	\end{abstract}

\maketitle

\textcolor{darkred}{\hrule}

	\tableofcontents
\textcolor{darkred}{\hrule}
\medskip

%------------------------------------------------------------------------------------------
%
%------------------------------------------------------------------------------------------
\section{Historical Background and Motivations}

\subsection{Introduction}
The theory of multiparameter persistence arises in many contexts in computational topology and data science. Often a finite point cloud $P \subset \RR^d$ is obtained from some observational data, and one would like to understand the significant features of the data cloud. It is typical that the data set will have some \emph{"noise"}, and such noise may need to be filtered out. It is in general a very difficult problem to analyze a data set and determine what subset of the data is noise and what is a significant feature of the data. In this case, one can use \emph{topological data analysis} and \emph{persistent homology}. This method has proven very robust and effective in applications, but is a relatively new field.

The main object of study in persistent homology are the \emph{"persistent modules"}. It was established by Carlsson and Zomorodian in \cite{CZ1, CZ2} that \emph{multiparameter persistent modules} correspond to modules over a polynomial ring $\KK[x_1, ..., x_n]$, $n$ being the number of parameters. In \cite{CZ1} it is stated that certain coefficient fields (or rings) $\KK$ may be more desirable in certain cases than others. They state having a theory and methods for understanding persistence modules over polynomial rings with arbitrary coefficients is important for applications since continuous invariants may be more difficult computationally and in some applications it may be desirable to have coefficients in finite fields, fields of nonzero characteristic, or the integers $\ZZ$. 

This paper is primarily a survey, much like that of \cite{HOST}, which we will make some connections to. Results and methods from commutative algebra and algebraic geometry are framed in the setting of multiparameter persistent homology and topological data analysis. The applications come from research initiated in the early 1970s by Buchsbaum and Eisenbud on the structure of free resolutions of modules over commutative rings, especially the theory of so-called \emph{generic free resolutions}. 

In this paper, we will establish a structure theory for the free resolutions of multiparameter persistent modules, which are finitely generated modules over the polynomial ring $\KK[x_1, ..., x_n]$, where $\KK$ may be any field, or the ring of integers $\ZZ$. The study of free resolutions of modules, especially that of the generic structure of resolutions over Noetherian rings has been extensively studied for decades (see for example \cite{Ho} and \cite{BE1, BE2, BE3}). The quest for a complete and practical understanding of the generic structure of free resolutions and the related representation theory of algebraic groups acting on rings is an endeavor which is still an active area of research (see for example \cite{B}, \cite{W} %,\cite{T1, T2, T3}
). In the following paper, many of these techniques and results are used to study multiparameter persistent homology. Ample references to past research on the subject are given.

\subsection{Remarks on Background Material and Assumptions in the Paper}
Throughout, the ring $R = \KK[x_1, ..., x_n]$ will always be the polynomial ring with coefficients in an arbitrary field or in the integers. The results which follow are almost entirely independent of the characteristic of the base field (or ring), and one may assume we are working over $\ZZ$ unless specifically stated otherwise. This is done for computational purposes, as computing over the field $\CC$ or even $\QQ$ or its algebraic closure $\overline{\QQ}$ can be computationally expensive, and can give rise to large errors when computing with large data sets. For example, it is shown in \S $1.4.1$ in \cite{C} that even computations with Hilbert matrices of relatively small size lead to large errors rather quickly. In certain specific cases, we may need to assume the algebraic closure of the base field. We will state this assumption when it is necessary, but for the majority of the results not even this assumption is required. 

There will be applications of the representation theory of algebraic groups, especially of $\GL(n)$ and $\SL(n)$, the \emph{general} and \emph{special linear groups}. Because much of what follows is done in the generality of coefficients over $\ZZ$, we will need a representation theory of these groups which works in this generality. This will require notions such as \emph{schemes} and \emph{functors of points}, which we will review the basics of. For the details and background we refer the reader to \cite{EH} and \cite{M}. 

We will need this, for example, when constructing \emph{universal objects} such as the \emph{Grassmannian functor}. The Grassmannian over a field is a classical construction, but if one wishes to work over arbitrary base rings, for example $\ZZ$, or a polynomial ring $\KK[x_1, ..., x_n]$, one needs something more general than the classical construction. In the case of the Grassmannian, this construction is not hard. It turns out if one constructs the Grassmannian $\Gr_{\ZZ}(r,k)$ over $\ZZ$, then for any other base scheme $S = \Spec(A)$, for $A$ any commutative ring, we have
\[ \Gr_S(r,k) = \Gr_{\ZZ}(r,k) \times S.\]
More generally, if we have any homomorphism of affine schemes $\phi: T \to S$ we have
\[ \Gr_T(r,k) = \Gr_S(r,k) \times_S T \]
is given by a fibre product. We will also talk about the \emph{universal complexes}, which is a universal object for complexes of free modules over a commutative ring in the same way $\Gr_{\ZZ}(r,k)$ is for Grassmannians. From this comes the study of \emph{generic free resolutions}, which are in some sense "\emph{the most general}" free resolutions in a way we will make precise later on. They give a kind of classifying space for free resolutions of a certain specified type. 

At this point, we will also need the notion of Schur functors and the combinatorics of Young tableaux in order to describe the irreducible representations of these linearly reductive groups. We will review this material as well, but refer the reader to \cite{E}, \cite{F}, \cite{FH}, \cite{DEP}, \cite{DS}, \cite{T1}. We will look at some specific results on free resolutions where it is assumed that the base field has characteristic zero. So, for example, one may assume we are working over the $\fp$-adic numbers $\QQ_{\fm}$. As seen in \cite{C}, this can still be useful from a computer science perspective. We will also state which specific cases require this particular assumption.

\subsection{Organization of the Paper}
In Sections \ref{Young tableaux} and \ref{MP persistence} we review the theory of Schur functors and Young tableaux, and recall basic information on multiparameter persistence homology and related commutative algebra and algebraic geometry. Our first task once this is done will be to develop a generic structure theory in order to extend the theory of parametrizing spaces of persistence modules started in \cite{CZ1}, where the so-called \emph{relation families} are studied. As it turns out, these are simply presentation spaces of multiparameter persistence modules. In this case the theory of \emph{determinantal ideals} and \emph{determinantal varieties} are of great utility. We will present the theory of the so-called \emph{Fitting invariants} of persistence modules and use them to study presentations of persistence modules. This is the content of Section \ref{presentation spaces}. Next in Section \ref{Generic Structure} we will define \emph{generic free resolutions} of persistence modules and we will review Bruns' \emph{"generic exactification"} of a complex of modules. Once the generic structure is described, we introduce the \emph{Buchsbaum-Eisenbud multipliers} and use these to state some deeper results about generic free resolutions of persistence modules in Section \ref{BE multipliers}. We then come to the \emph{varieties of complexes} in Section \ref{varieties of complexes}, where we study a scheme which gives geometric properties of families of free resolutions of persistent modules. In Section \ref{Bases and Standard Monomials} we then give a Gr\"{o}bner basis of the coordinate rings of varieties of complexes, and we develop a \emph{standard monomial theory} using the combinatorics of \emph{Young tableaux}. This section in particular provides methods which are particularly useful tools standard in computational algebraic geometry. Throughout we provide many explicit examples, many of which were computed using Macaulay2. We provide the code for the computations carried out in the examples in the appendix at the end.

%------------------------------------------------------------------------------------------
%
%------------------------------------------------------------------------------------------
\section{Young Tableaux and Schur Functors}\label{Young tableaux}
A \textbf{partition} $\lambda \vdash m$, of some nonnegative integer $m$ is a sequence of nonincreasing numbers $\lambda = (\lambda_1, ..., \lambda_s)$ such that $\sum_i \lambda_i = m$. We define a \textbf{Young diagram} corresponding to a partition $\lambda = (\lambda_1, ..., \lambda_s) \vdash m$, as the diagram with $\lambda_i$ boxes in the $i^{th}$ row. We identify partitions with their Young diagrams and speak of the two interchangeably. For a free module $E$ over a commutative ring $\KK$ with ordered basis $\{e_1, ..., e_n\}$, we associate a filling of the diagram $\lambda$ by integers $\{1, ..., n\}$ to an element in the module
\[ \bigwedge^{\lambda_1}E \otimes \cdots \otimes \bigwedge^{\lambda_s}E \]
as follows: If in row $i$ and box $j$ of $\lambda$ we have the integers $t(i, j)$ for $j=1, ..., \lambda_i$, we associate the element
\[ e_{t(1,1)} \wedge \cdots \wedge e_{t(1,\lambda_1)} \otimes  \cdots \otimes e_{t(s,1)} \wedge \cdots \wedge e_{t(s, \lambda_s)}.\]
We define such a filling to be a \textbf{tableaux}. A tableaux is \textbf{standard} if its rows are strictly increasing, and its columns are nondecreasing. 

Fix a free module $E$ of rank $n$ over a commutative ring $K$. Let $\lambda = ( \lambda_1, ..., \lambda_s) \vdash m$ be a partition. We define the module
\[ L_{\lambda}E = \bigwedge^{\lambda_1} \otimes \cdots \otimes \bigwedge^{\lambda_s}E/R(\lambda, E) \]
where the submodule $R(\lambda, E)$ is the sum of all submodules of the form
\[ \bigwedge^{\lambda_1}E \otimes \cdots \otimes \bigwedge^{\lambda_{a-1}}E \otimes R_{a, a+1}E \otimes \bigwedge^{\lambda_{a+2}}E \otimes \cdots \otimes \bigwedge^{\lambda_s}E \]
for $1 \leq a \leq s-1$. Here $R_{a,a+1}E$ is the submodule spanned by the images of the maps $\theta(\lambda, a, u, v, E)$
\[ \xymatrix{ \bigwedge^uE \otimes \bigwedge^{\lambda_a-u+\lambda_{a+1}-v}E \otimes \bigwedge^vE \ar[d]^{1 \otimes \Delta \otimes 1} \\
	\bigwedge^uE \otimes \bigwedge^{\lambda_a-u}E \otimes \bigwedge^{\lambda_{a+1}-v}E \bigwedge^vE \ar[d]^{m_{12} \otimes m_{34}} \\
	\bigwedge^{\lambda_a}E \otimes \bigwedge^{\lambda_{a+1}}E}.\]
Here, $\Delta$ is the diagonal embedding (or comultiplication), and $m: \bigwedge^r E \otimes \bigwedge^s E \to \bigwedge^{r+s}E$ is a component of the multiplication map on the exterior algebra $\bigwedge^{\bullet}E$. Combinatorially, we may use the theory of Young tableaux to describe the modules $L_{\lambda}E$. It is well known the standard 
tableaux form a basis of $L_{\lambda}E$. Further, the relations $R(\lambda, E)$ may be visualized as follows. Let $\lambda \vdash 
m$ be a partition with at most $n$ parts. To the map $\theta(\lambda, a, u, v, E)$ we associate a \textbf{Young scheme}. The Young 
scheme is defined as being a diagram of shape $\lambda$ with empty boxes everywhere except the $a^{th}$ and 
$a+1^{st}$ rows. In row $a$ there are $u$ empty boxes followed by $\lambda_a-u$ filled boxes. In row $a+1$ there are 
$\lambda_{a+1}-v$ filled boxes followed by $v$ empty boxes. We restrict to $u+v<\lambda_{a+1}$ as in the definition of 
$L_{\lambda}E$. Let $U_j = e_{t(j,1)} \wedge \cdots \wedge e_{t(j, \lambda_j)}$, $V_1 = e_{x_1} \wedge \cdots \wedge e_{x_u}$, 
$V_2 = e_{y_1} \wedge \cdots \wedge e_{y_{\lambda_a-u+\lambda_v-v}}$, and $V_3 = e_{z_1} \wedge \cdots \wedge e_{z_v}$. 
Then the image of a typical element 
\[ U_1 \otimes \cdots \otimes U_{a-1} \otimes V_1 \otimes V_2 \otimes V_3 \otimes U_{a+2} \otimes \cdots \otimes U_s \]
is a sum of tableaux where we put $x_1, ..., x_u$ in the empty $u$ boxes in row $a$, and $z_1, ..., z_v$ in the empty $v$ boxes of row $a+1$, and we shuffle the elements $y_1, ..., y_{\lambda_a-u+\lambda_{a+1}-v}$ between the filled boxes in row $a$ and $a+1$. The coefficients of the tableaux in the summation are $\pm 1$ depending on the sign of the permutations coming from the exterior diagonals. 

%\begin{ex}

%\end{ex}

%\begin{ex}

%\end{ex}

%\begin{ex}

%\end{ex}

\begin{remark}
	From now on, we will let $L_{\lambda}F$ denote the Schur functor corresponding to the diagram $\lambda$, and we will let $S_{(\lambda-k)}F = L_{\lambda}F \otimes (\bigwedge^{\dim F}F^*)^{\otimes k}$, where $\lambda-k$ is obtained by subtracting the integer $k$ from every entry of the vector $\lambda$. 
\end{remark}

%------------------------------------------------------------------------------------------
%
%------------------------------------------------------------------------------------------
\section{Multiparameter Persistence Modules}\label{MP persistence}

\subsection{Homological Persistence}
The theory of multiparameter persistence arises in many contexts in computational topology and data science. Often a point finite cloud $P \subset \RR^d$ is obtained from some observational data, and one would like to understand the significant features of the data cloud. It is typical that the data set will have some \emph{"noise"}, and such noise may need to be filtered out. It is in general a very difficult problem to analyse a data set and determine what subset of the data is noise and what is a significant feature of the data. In this case, one can use topological data analysis and persistence homology. This method has proven very robust and effective in applications.

\begin{defn}
A \textbf{multifiltered space} $X$, is a topological space with a family of subspaces $\{X_v \subseteq\}_{v \in \NN^n}$, with inclusion maps $X_u \hookrightarrow X_v$ whenever $u \leq v$ in the standard partial order on $\NN^n$. We require the following diagram to commute:
\[ \xymatrix{
X_u \ar[r] \ar[d] & X_{v_1} \ar[d] \\
X_{v_2} \ar[r] & X_w}
\]
whenever $u \leq v_1, v_2 \leq w$. 
\end{defn}

\begin{defn}
Let $\KK$ be a field. A \textbf{persistence module} $M$, is a family of $\KK$-modules $\{M_u\}_{u \in \NN^n}$ with maps 
\[ \phi_{u,v}: M_u \to M_v \]
for all $u \leq v$ such that $\phi_{v,w} \circ \phi_{u,v} = \phi_{u,w}$ for all $u \leq v \leq w$. Let $M$ be a persistence module and let $R = \KK[x_1, ..., x_n]$. Define
\[ \alpha(M) = \bigoplus_v M_v \]
where the $\KK$-module structure is the direct sum structure, and $x^{u-v}:M_u \to M_v$ is $\phi_{u,v}$ when $u \leq v$ in $\NN^n$. This gives an equivalence of categories between the category of finite persistence modules over $\KK$, and the category of $\NN^n$-graded finitely generated modules over $R$. 
\end{defn}

Now, the first two terms of a minimal free resolution of a persistence module $M$ are unique up to isomorphism of free chain complexes. This is important for classification of isomorphism classes of multigraded persistence modules. 

% \subsection{What Makes a Complex Exact?}

\subsection{Free Resolutions}
The structure of free resolutions of modules over commutative rings is one of the most fundamental objects of study in commutative algebra. If one wants to understand a ring well, one needs to understand the modules over that ring. If one has some structure theorems about what free (or projective/injective) resolutions of the modules "look like", then one understands the modules quite well. In some very classical results which still have some significant implications which are being studied in commutative algebra, Buchsbaum and Eisenbud gave their "structure theorems" on free resolutions over Noetherian rings (see \cite{BE1, BE2, BE3} for example). 

It was a generalization of classical results of Hilbert, and later Burch, which is unsurprisingly known as the \textbf{Hilbert-Burch Theorem}. Buchsbaum and Eisenbud's results were of course extended, notably by Eagon and Northcott \cite{EN}. A very nice summary of the results can be found in the notes of Hochster.

For the time being let us focus on the "simple" examples which relate to bifiltrations in Topological Data Analysis, i.e. $\mathbb{N}^2$-graded modules over $R=\KK[x,y]$. Many of the following statements are independent of characteristic, and so unless otherwise stated, one may assume $\KK$ to be an arbitrary field, or the ring of integers.

Let us attempt to put the results of Buchsbaum and Eisenbud into simple and straightforward language for those who are not experts at commutative algebra, and to provide some concrete examples as an introduction. The precise statements and proofs will be provided in subsequent sections. In the $\mathbb{N}^2$-graded case, persistence modules are modules over $\KK[x,y]$, and the free resolutions are at worst of length $2$. So in general, we are in the case elaborated on by Hochster, where we study free resolutions of the form
\[ 0 \to R^{b_2} \to R^{b_2} \to R^{b_0} \]
where we have excluded the module given by the cokernel of the right most map to $R^{b_0}$. Let us look at an example which comes from the paper \cite{HOST}. In the paper this resolution shows up in several examples and is presented in graded form incorrectly. In the notation of \cite{HOST}, the graded resolution should look like, 
%\[
%\xymatrix{
%0 \ar[r] & S(-2,-3) \ar[r]^{\begin{pmatrix}
%0 \\ -y \\ x
%\end{pmatrix}}_{d_2}
%&  \ar[rr]^{\begin{pmatrix}
%x^2 & 0 & 0\\
%0 & x & y
%\end{pmatrix} }_{d_1}
%& & S(-1,-1) \oplus S(-1,-2) \ar[r]^{p_M \quad \quad \quad} & S(-1,-1)/(x^2) \oplus S(-1,-2)/(x,y) \ar[r] & 0
%} 
%\]

\begin{tikzcd}[ampersand replacement=\&, column sep=.35in,row sep=1in]
S(-2,-3) \arrow{rr}{ \begin{pmatrix} 0 \\ -y \\ x \end{pmatrix}_{d_2}} \& \& \begin{matrix} S(-3,-1) \\ \oplus S(-2,-2) \\ \oplus S(-1,-3) \end{matrix} \arrow{rr}{ \begin{pmatrix} x^2 & 0 & 0\\ 0 & x & y \end{pmatrix}_{d_1}} \&  \& \begin{matrix}S(-1,-1) \\ \oplus S(-1,-2)\end{matrix} \arrow{rr}{ p_M } \&  \& \begin{matrix}S(-1,-1)/(x^2) \\ \oplus S(-1,-2)/(x,y) \end{matrix}
\end{tikzcd}

and we have thrown out the "extraneous" part of the resolution involving the free summand $S(-2,-2)$ of the module
\[ H_1(K') = S(-2,-2) \oplus S(-1,-1)/x^2 \oplus S(-1,-2)/(x,y)\]
and we have rewritten the resolutions of $S/x^2$ and $S/(x,y)$ provided in their paper in the appropriate grading, which may have proved confusing for someone not familiar with commutative algebra. We also leave out the zeros simply for formatting and lack of space. For those who are new to commutative algebra, $S(-i,-j) \cong k[x,y]$ in the above resolution is the "multi-graded shift" of the ring $R=\KK[x,y]$. So in our notation, suppressing the shifts we have, 
\[
\xymatrix{
0 \ar[r] & R^1 \ar[r]^{\begin{pmatrix}
0 \\ -y \\ x
\end{pmatrix}}_{d_2}
& R^3 \ar[rr]^{\begin{pmatrix}
x^2 & 0 & 0\\
0 & x & y
\end{pmatrix} }_{d_1}
& & R^2 \ar[r]^{p_M \quad \quad \quad} & R/(x^2) \oplus R/(x,y) \ar[r] & 0
}
\]
Now, in general we have a natural isomorphism from standard multilinear algebra,
\[ \bigwedge^k R^n \cong \bigwedge^{n-k} (R^n)^* \]
so in this example, 
\[ \bigwedge^2R^3 \cong \bigwedge^1R^3 \]
\[
\bigwedge^2 d_1 = \begin{pmatrix}
x^3 & x^2y & 0
\end{pmatrix} \quad \quad \bigwedge^{1} d_2 = d_2 = \begin{pmatrix}
0 \\ -y \\ x
\end{pmatrix}
\]
If we let $[i,j]_1$ denote the $2 \times 2$ minor of $d_1$ given by the columns $i$, and $j$, and we let $k$ be the complement of $i, j \in \{1,2,3\}$, so that $[k]_2$ is the $1 \times 1$ minor of $d_2$, we have the following equations
\[ [1,2]_1 = a[3]_2, \quad [1,3]_1 = a[2]_2, \quad [2,3]_1 = a[1]_2
\]
Then the "Buchsbaum-Eisenbud" multiplier is $x^2 \in R$. The "most generic resolution" with format $(1,3,2)$ is generated over $\mathbb{Z}$ by the entries of two "generic" matrices of the same size as $d_1$ and $d_2$, and the Buchsbaum-Eisenbud multiplier. If we let $\varphi = (x_{i,j})$ and $\psi = (y_j)$, for $i, j \in \{1,2,3\}$, and let $k$ be the complement of $i, j$ in $\{1,2,3\}$, then it is a quotient of:
\[ \mathbb{Z}[x_{1,1}, x_{1,2}, x_{1,3}, x_{2,1}, x_{2,2}, x_{2,3}, y_1, y_2, y_3][x^2] = \mathbb{Z}[x_{i,j}, y_j][x^2] \]
by the ideal,
\[ (\varphi \cdot \psi) + ([i,j]_{\varphi} + \mathit{sgn}(i,j,k) \cdot x^2 \cdot [k]_{\psi}) \]
This is the general setup for the $\mathbb{N}^2$ graded case. Hochster gives a wonderful explanation of this in \cite{}, but we will include a paraphrasing of it for completeness and so that we can try to keep the paper self contained. In pedestrian terms, for an arbitrary free resolution 
\[ \xymatrix{
0 \ar[r] & R^{b_2} \ar[r]^{\psi} & R^{b_1} \ar[r]^{\varphi} & R^{b_0} 
}\]
of an $\mathbb{N}^2$-graded persistence module, we can compute the ring $\mathcal{R}:= \mathbb{Z}[x_{i,j}, y_{j,k}][a_I]$, where $x_{i,j}$ and $y_{j,k}$ are "generic" functions defining the maps $\psi$ and $\varphi$. 
We then quotient out by the ideal given by the multiplication $\psi \cdot \varphi$, then we quotient out relations between the minors of the two matrices, which are given by list of $b_0 \choose r_1$ \textbf{Buchsbaum-Eisenbud multipliers}, "$a_i$". In our example, the multipliers come from the three equations
\[ [1,2]_{\varphi} = a_1[3]_{\psi}, \quad [1,3]_{\varphi} = a_2[2]_{\psi}, \quad [2,3]_{\varphi} = a_3[1]_{\psi}
\]
Notice the indices on the right side of each equality are the complement of those on the left. This is always the case, and is essentially a consequence of the isomorphism 
\[
\Gr(r,V) \cong \Gr(n-r,V^*)
\]
on Grassmannians. For the details, we require slightly more high-brow language, so avert your eyes if you are squeamish around multilinear algebra. 

For any free resolution of format $(b_2, b_1, b_0)$, say
\[ \xymatrix{
\mathbf{F}_{\bullet}:= \ 0 \ar[r] & F_2 \ar[r]^{d_2} & F_1 \ar[r]^{d_1} & F_0 
}\]
the Buchsbaum-Eisenbud multipliers come from the requirement that the following diagrams commute,
\[
\xymatrix{
\bigwedge^{r_{i+1}}F_i^* \cong \bigwedge^{r_i}F_i \ar[dr]_{a_{i-1}^*} \ar[rr]^{\bigwedge^{r_i}d_i} && \bigwedge^{r_i}F_{i-1} \\
& R \ar[ur]_{a_i}
}
\]
we identify this with the quotient of 
\[ \mathbf{Sym}\left(F_2 \otimes F_1^* \oplus F_1 \otimes F_0^*\right) \]
with the ideal generated by the representations
\[ F_2 \otimes F_0^* \hookrightarrow \left(F_2 \otimes F_1^*\right) \otimes \left(F_1 \otimes F_0^*\right) \]
corresponding to the sacred equation of the complex $d_2 \circ d_1 = 0$, 
\[ \bigwedge^{r_i+1}F_i \otimes \bigwedge^{r_i+1}F_{i-1}^* \]
corresponding to the rank condition $\bigwedge^{r_i+1}d_i = 0$ (in more generality, these results hold for when the ranks do not add up to be the rank of the module $F_1$). 

Now, denote by $\mathcal{R}_{comp}$ to be the ring above after taking quotients by these ideals. Now, what is the importance of this ring? It can be thought of as the "coordinate ring" of a "general free resolution" 
\[ \mathbb{F}_{\bullet}:= \mathbb{F}_2 \to \mathbb{F}_1 \to \mathbb{F}_0 \]
which classifies all resolutions of the format $(b_2, b_1, b_0)$. In particular, it will give an explicit description of the free resolutions of all persistence modules with resolutions of this format. In the paper by Carlsson and Zomodorian, they propose the study of a space they denote by $\mathcal{RF}(\xi_1, \xi_0)$. This is just the restriction of the above to the "general presentations" $\mathbb{F}_1 \to \mathbb{F}_0$, with fixed $b_1 = \xi_1$ and $b_0 = \xi_0$. There is some study of this in terms of quivers, and the results from commutative algebra will all be applicable as well. 

In particular, we can find all persistence modules with minimal resolutions of the same format, and therefore with the same minimal presentations via a unique map. This gives a very concrete and complete description of the spaces $\mathcal{RF}(\xi_1, \xi_0)$. One can think of this as taking the closure of the generic object, and all other objects being a degeneration of this general free resolution.

\subsection{The Grassmannian Functor}
Let $R = \KK[x_1, ..., x_n]$, and let $M$ be an $R$-module with an ordered set $m_1, ..., m_k \in M$. Define an equivalence relation
\[ (M; m_1, ..., m_k) \sim (M'; m_1', ..., m_k') \]
if and only if there is an isomorphism $fM \to M'$ such that $f(m_i) = m_i'$. Define the \textbf{Grassmanian functor}, denoted $\Gr(r, k)$, to be the functor from the category of rings to the category of sets which assigns to a ring $R$
\[ \left\lbrace \substack{ (M;m_1, ..., m_k), \ \rank(M)=r, \\ M=R\la m_1, ..., m_k\ra } \right\rbrace \bigg/ \sim\]
and to each ring homomorphism $f:R \to S$ it assigns a set map
\[ [M;, m_1, ..., m_k] \mapsto [M \otimes_R S; m_1 \otimes 1, ..., m_k \otimes 1],\]
where $[M; m_1, ..., m_k]$ denotes the equivalence classes of locally free $R$-modules. 
The importance of viewing the classical Grassmannian manifold (over an algebraically closed field) as a scheme, or 
the associated functor of points is due to the fact that constructing the Grassmannian $\Gr_{\ZZ}(r,n)$ over $
\Spec(\ZZ)$, and defining 
$$ \Gr_{S}(r,n) = \Gr_{\ZZ}(r,n) \times S $$
for any affine scheme $S = \Spec(A)$, yields the desired Grassmannian for arbitrary base rings. Said another way, for any $0<r<k$, 
let 
\[ \Gr(r,k): \mathbf{Rings} \to \mathbf{Sets} \]
be the functor given by
\[ \Gr(r,k)(S) = \{\text{rank} \ r \ \text{direct summands of} \ S^n\}. \]
Letting $I = (i_1, i_2, ..., i_r) \subseteq \{1,2,...,k\}$ be a subset with $1 \leq i_1 < i_2 < \cdots < i_r \leq k$. Let $x_I = x_{(i_1, i_2, ..., i_k)}$ be indeterminates over $\ZZ$ so that $S = \ZZ[x_I]_{I \subseteq \{1,2,...,k\}}$ be the ring over $\ZZ$ in $k\choose r$ variables. The variables $x_I$ can be thought of as corresponding to maximal minors of an $r \times k$ generic matrix $M \in \Mat_{r \times k}(\ZZ)$. Now, let $M = (\id_r, B)$ have the identity matrix as its first $r \times r$-submatrix, and $B$ is an $r \times (k-r)$ matrix. Then the maximal minors of $M$ can be identified with the minors of $B$ of all sizes. We will see some examples of this construction in the next section. The \textbf{Pl\"{u}cker relations} are the homogeneous polynomials in the variables $x_I$ given by expanding minors of complementary submatrices. In particular, if we let $\ZZ[y_{1,1}, y_{1,2}, ..., y_{r,k}] = \ZZ[Y]$, where $Y = (y_{i,j})$ is an $r \times k$ matrix of variables, and then map
\[ x_I \mapsto (i_1, i_2, ..., i_r) \]
where $(i_1, i_2, ..., i_r)$ is the minor of $Y$ given by the columns $i_1, ..., i_r$, we obtain a map
\[ \phi: \ZZ[x_I] \to \ZZ[Y] \]
with $\ker(\phi) = J$ the Pl\"{u}cker relations. Then we define the projective scheme
\[ \Gr_{\ZZ}(r,k) = \Proj\left(\ZZ[x_I]/J \right) \hookrightarrow \Proj(\ZZ[x_I]) = \PP^{{k\choose r} -1}_{\ZZ} = \PP\left(\bigwedge^r \ZZ \right). \]
In the next section we will see how the combinatorics of Young tableaux are used for Grassmannians. 

\begin{remark}
The above construction is functorial. In particular, for any affine schemes $S, T$ with a homomorphism $T \to S$ we have 
\[ \Gr_T(r,k) = \Gr_S(r,k) \times_S T \]
is given by a fibre product of schemes. 
\end{remark}

%This yields the \emph{functor of points} of a closed subscheme $\Gr_{\ZZ}(r,k) \hookrightarrow \PP^r_{\ZZ}$ of 
%projective space over $\ZZ$. Let $\Mat_{r \times k}(\ZZ)$ be $r \times k$ matrices over $\ZZ$. Let $I = (i_1, 
%i_2,...,i_r) \subseteq \{1,2,...,k\}$ be a size $r$ subset. Set the $r \times r$ submatrix given by the columns $I$ 
%equal to the identity matrix. The set of all matrices over $\ZZ$ of this form gives a closed set in $\Mat_{r \times k}
%(\ZZ)$. 
%Let $S = \Proj \left(\ZZ [x_I]\right)$. 

\section{Presentation Spaces, Rank Invariants, and Fitting Ideals}\label{presentation spaces}

Let $\NN^n$ be the partially ordered lattice with partial order given by
\[ u = (u_1, ..., u_n) \leq v = (v_1, ..., v_n) \ \iff \ u_i \leq v_i \ \text{for all} \ i=1, ..., n.\]
Next, define the reverse degree lexicographic order on $\NN^n$ by
\[ u < v \ \iff \ \deg(u)<\deg(v) \ \text{or} \ \deg(u) = \deg(v) \ \text{and} \ u_i>v_i, \]
where $i$ is the first index such that $u_i \neq v_i$ in reverse order counting backwards down to $i$, i.e. $n, n-1, n-2, ..., i$. For example, if $n =2$ and we look at the lattice $\NN^2$, we picture this as

\medskip

\begin{center} Figure \ref{Sym maps} \end{center}
\[
\begin{tikzcd}\label{Sym maps}
\fbox{$x^6$} \arrow[r] & x^6y \arrow[r] & x^6y^2 \arrow[r] & x^6y^3 \arrow[r] & x^6y^4 \arrow[r] & x^6y^5 \arrow[r] & x^6y^6 \\
x^5 \arrow[r] \arrow[u] & \fbox{$x^5y$} \arrow[r] \arrow[u] & x^5y^2 \arrow[r] \arrow[u] & x^5y^3 \arrow[r] \arrow[u] & x^5y^4 \arrow[r] \arrow[u] & x^5y^5 \arrow[r] \arrow[u] & x^5y^6 \arrow[u] \\
\fbox{$x^4$} \arrow[u] \arrow[r] \arrow[ru, "xy=yx" description, no head, dotted] & x^4y \arrow[u] \arrow[r] & \fbox{$x^4y^2$} \arrow[u] \arrow[r] & x^4y^3 \arrow[u] \arrow[r] & x^4y^4 \arrow[u] \arrow[r] & x^4y^5 \arrow[u] \arrow[r] & x^4y^5 \arrow[u] \\
x^3 \arrow[u] \arrow[r] & \fbox{$x^3y$} \arrow[u] \arrow[r] \arrow[ru, "xy=yx" description, no head, dotted] & x^3y^2 \arrow[r] \arrow[u] & \fbox{$x^3y^3$} \arrow[r] \arrow[u] & x^3y^4 \arrow[r] \arrow[u] & x^3y^5 \arrow[u] \arrow[r] & x^3y^6 \arrow[u] \\
x^2 \arrow[u] \arrow[r] & x^2y \arrow[u] \arrow[r] & \fbox{$x^2y^2$} \arrow[r] \arrow[u] \arrow[ru, "xy=yx" description, no head, dotted] & x^2y^3 \arrow[r] \arrow[u] & \fbox{$x^2y^4$} \arrow[r] \arrow[u] & x^2y^5 \arrow[u] \arrow[r] & x^2y^6 \arrow[u] \\
x \arrow[u] \arrow[r] & xy \arrow[u] \arrow[r] & xy^2 \arrow[r] \arrow[u] & \fbox{$xy^3$} \arrow[r] \arrow[u] \arrow[ru, "xy=yx" description, no head, dotted] & xy^4 \arrow[r] \arrow[u] & \fbox{$xy^5$} \arrow[u] \arrow[r] & xy^6 \arrow[u] \\
1 \arrow[u] \arrow[r] & y \arrow[r] \arrow[u] & y^2 \arrow[r] \arrow[u] & y^3 \arrow[r] \arrow[u] & \fbox{$y^4$} \arrow[r] \arrow[u] \arrow[ru, "xy=yx" description, no head, dotted] & y^5 \arrow[r] \arrow[u] & \fbox{$y^6$} \arrow[u]
\end{tikzcd}
\]

\medskip

Here, we identify $x^ry^s$ with the coordinate $(r,s) \in \NN^2$. In this way, we view 
\[ \KK[x,y] = \Sym(\KK^2) = \bigoplus_{n \geq 0} \Sym^n(\KK^2) \]
as a bi-graded vector space, with bi-degree $(r,s)$ for the monomial $x^ry^s$. More generally, 
\[ \KK[x_1, ..., x_n] = \Sym(\KK^n) = \bigoplus_{k \geq 0} \Sym^k(\KK^n) \]
is an $n$-graded vector space, with multi-degree $(a_1, ..., a_n)$ for the monomial $x_1^{a_1} \cdots x_n^{a_n}$.  We have vector space maps $x_i: \Sym^{(a_1, ..., a_n)}(\KK^n) \to \Sym^{(a_1, ..., a_i+1, ..., a_n)}(\KK^n)$. In somewhat different notation, this can be written more compactly as
\[
x_i : \KK(x^a) \to \KK(x^{a}x_i)
\]
where $\KK(x^a) = \KK(x_1^{a_1} \cdots x_n^{a_n}) = \Sym^{(a_1, ..., a_n)}(\KK^n) \cong \KK$ is the one dimensional subspace of $\Sym^r(\KK^n)$ spanned by the monomial $x_1^{a_1} \cdots x_n^{a_n}$, where $\sum_{i=1}^n a_i = r$. In $\NN^2$, the spaces $\Sym^4(\KK^2)$ and $\Sym^6(\KK^2)$ are boxed in Figure \ref{Sym maps}. The dotted lines indicate the map $xy: \Sym^4(\KK^2) \to \Sym^6(\KK^2)$. Now, let $u = (u_1, ..., u_n), v=(v_1, ..., v_n) \in \NN^n$ be two vectors. Let us define $\la u \ra = \la (u_1, ..., u_n) \ra$ to be the ideal generated by the monomial $x_1^{u_1} \cdots x_n^{u_n}$. For example, in the following lattice for $\NN^2$ we have the ideal $\la (3,1) \ra = R \la x^3y \ra$ boxed. 
\medskip
\[
\begin{tikzcd}
 & \vdots &  &  & \vdots &  &  \\
{(0,4)} \arrow[r] & {(1,4)} \arrow[r] & {(2,4)} \arrow[r] & {\fbox{$(3,4)$}} \arrow[r] & {\fbox{$(4,4)$}} \arrow[r] & {\fbox{$(5,4)$}} &  \\
{(0,3)} \arrow[r] \arrow[u] & {(1,3)} \arrow[r] \arrow[u] & {(2,3)} \arrow[r] \arrow[u] & {\fbox{$(3,3)$}} \arrow[r] \arrow[u] & {\fbox{$(4,3)$}} \arrow[r] \arrow[u] & {\fbox{$(5,3)$}} \arrow[u] & \cdots \\
{(0,2)} \arrow[u] \arrow[r] & {(1,2)} \arrow[u] \arrow[r] & {(2,2)} \arrow[u] \arrow[r] & {\fbox{$(3,2)$}} \arrow[u] \arrow[r] & {\fbox{$(4,2)$}} \arrow[u] \arrow[r] & {\fbox{$(5,2)$}} \arrow[u] &  \\
{(0,1)} \arrow[u] \arrow[r] & {(1,1)} \arrow[u] \arrow[r] & {(2,1)} \arrow[u] \arrow[r] & {\fbox{$(3,1)$}} \arrow[u] \arrow[r] & {\fbox{$(4,1)$}} \arrow[u] \arrow[r] & {\fbox{$(5,1)$}} \arrow[u] & \cdots \\
{(0,0)} \arrow[u] \arrow[r] & {(1,0)} \arrow[u] \arrow[r] & {(2,0)} \arrow[u] \arrow[r] & {(3,0)} \arrow[u] \arrow[r] & {(4,0)} \arrow[u] \arrow[r] & {(5,0)} \arrow[u] & 
\end{tikzcd}
\]
\medskip
Now, as a vector space, this is isomorphic to the ring $R = \KK[x_1, ..., x_n]$. It will be denoted $R(-3,-1)$. In general, we denote the free multigraded $R$-module generated by $(u_1, ..., u_n) \in \NN^n$, by $R(-u) = R(-u_1, ..., -u_n)$. There is a multigraded map of free multigraded modules $R(-u_1, ..., -u_n) \to R(-v_1, ..., -v_n)$ given by $x^{v-u}$, which corresponds to any (directed) path from $v$ to $u$ in the lattice $\NN^n$. If no such path exists, then there is no multigraded map $R(-u) \to R(-v)$. In general, we may express any multigraded map of free modules
\[ f: \bigoplus_{\substack{i=1 \\ u(i) = (u(i)_1, ..., u(i)_n)}}^{b_1} R(u(i))^{b_1(i)} \to \bigoplus_{\substack{j=1 \\ v(j) = (v(j)_1, ..., v(j)_n)}}^{b_0} R(a(i))^{b_0(j)} \]
where $b_1(i)$ is the number of summands in multidegree $u(i)$ and $b_0(j)$ is the number of summands in multidegree $v(j)$, and $b_1 = \sum_i b_1(i)$ and $b_0 = \sum_j b(j)$ are each finite. 

The way one might think of this, is as assigning to each $(u(i)_1, ..., u(i)_n) \in \NN^n$, the multiplicity $b_1(i)$ for all $u'(i) \geq u(i)$ in the partial order on $\NN^n$, and similarly for the $v(j)$. So, for example:
\newpage
\[ xy: R(-u_1-1, -u_2-1) \to R(-u_1, -u_2) \]
\[x^2: R(-u_1-2, -u_2) \to R(-u_1, -u_2) \]
\[y^2: R(-u_1, -u_2-2) \to R(-u_1, -u_2)\]

can be represented by the matrix
\[ \begin{pmatrix}
xy & 0 & 0 \\
0 & x^2 & 0 \\
0 & 0 & y^2
\end{pmatrix}\]

There is a linear map then corresponding to this matrix for any $(u_1, u_2) \in \NN^2$. 
\[ \KK(u_1, u_2)^{\oplus 3} \to \KK(u_1+1, u_2+1) \oplus \KK(u_1+2, u_2) \oplus \KK(u_1, u_2+2) \]

In general, suppose we choose some nonnegative integer $\bd(u) = \bd(u_1, ..., u_n)$ for each $u = (u_1, ..., u_n) \in \NN^n$ and some $\bd(v) = \bd(v_1, ..., v_n)$ for each $v = (v_1, ..., v_n) \in \NN^n$. Then there is a multigraded vector space map
\[ f: \KK(u_1, ..., u_n)^{\bd(u_1, ..., u_n)} \to \KK(v_1, ..., v_n)^{\bd(v_1, ..., v_n)} \]
corresponding to a path from $v$ to $u$ given by $x^{v-u}$, which has rank at most $\min\{\bd(u), \bd(v)\}$. Now, let $M$ be any finitely generated multigraded $R$-module. Let $u \in \NN^n$ and let $M_u = \KK(u)^{\bd(u)} = \KK(u_1, ..., u_n)^{\bd(u_1, ..., u_n)}$. Now, define the \textbf{top} of the module $M$ to be
\[
\ttop(M) = \bigoplus M_u = \bigoplus \KK(u)^{\bd(u)}:\ u \in \NN^n, \ \bd(u) \neq 0, \ \text{and} \ \not\exists \ v<u \in \NN^n: \ M_v \neq 0.
\]
Let us define a \textbf{projective cover} of $M$ by 
\[ P_0(M) = \bigoplus P_u^{\bd(u)} = \bigoplus R(-u)^{\bd(u)} \]
where $u$ and $\bd(u)$ are as in the definition of $\ttop(M)$. For an illustration of $\ttop(M)$ for a module over $\KK[x,y,z]$ see Figure \ref{3DStairs}. The corners of the boxes with bold vertices indicate $\ttop(M)$. 
\medskip

\begin{center}\label{3DStairs}
\fbox{Figure \ref{3DStairs}
\includegraphics[
    page=1,
    width=280pt,
    height=280pt,
    keepaspectratio
]{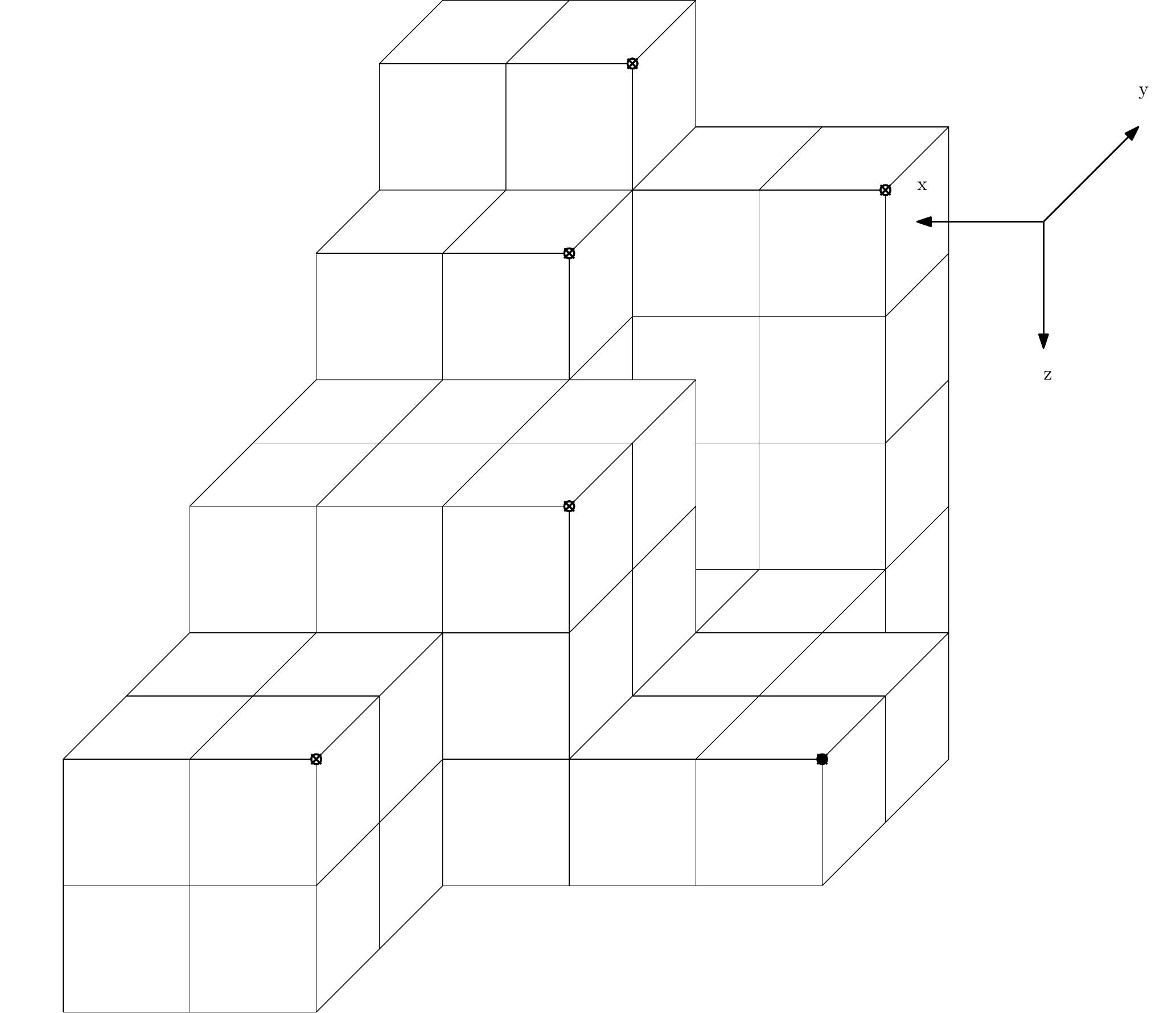}}
\end{center}
\medskip

Next, suppose in the partial order on $a, b \in \NN^n$ we have for any $R(-a)$ and $R(-b)$ that $c = \lcm(a,b)$. In other words, $c \in \NN^n$ corresponds to the monomial that is the least common multiple of the monomials corresponding to $a, b \in \NN^n$. 

\subsection{The Rank Invariant}

In this section a detailed description of the geometric properties of the rank invariant is given. Suppose $M$ is an $n$-graded persistence module, and the vector spaces $M_u = \KK(u)^{\bd(u)}$ and $M_v = \KK(v)^{\bd(v)}$ have dimension $\bd(u)$ and $\bd(v)$ respectively. Suppose further that $u \leq v$ in the partial order on $\NN^n$. The map $\phi_{u,v}(M) = x^{v-u} = x_1^{v_1-u_1} \cdots x_n^{v_n-u_n}: M_u \to M_v$ is a linear map of the form

\[ \phi(M) = \begin{pmatrix}
y_{1,1} & y_{1,2} & \cdots & y_{1,\bd(u)}\\
y_{2,1} & y_{2,2} & \cdots & y_{2,\bd(u)}\\
\vdots & \vdots & \ddots & \vdots \\
y_{\bd(v),1} & y_{\bf(v),2} & \cdots & y_{\bd(v),\bd(u)}
\end{pmatrix} \]

with $y_{i,j}$ indeterminates over $\KK$, i.e. it is a matrix in $\Mat_{\bd(v) \times \bd(u)}(\KK)$, with coordinates $(y_{i,j})$ once a basis of $M_u$ and $M_v$ have been chosen. Now, If we would like to classify all persistence modules with $\rank(\phi_{u,v}(M)) \leq r$, where $\phi(M)$ is the chosen linear map with respect to the module $M$, then we are geometrically classifying the so-called determinantal varieties. These varieties have a well known structure, but are by no means simple in general. However, many methods for understanding them are available, in particular, their coordinate rings have a standard monomial theory and Gr\"{o}bner bases given by filtrations using Schur functors and Young tableaux. 

\begin{defn}
A \textbf{determinantal variety} $\cV(m,n,r) \subset \Mat_{n \times m}(\KK)$ is the set of $n \times m$-matrices $X$ over $\KK$ (where $\KK = \ZZ$ or is a field), with rank $\rank(X) \leq r$. 
\end{defn}

We will study these varieties in relation to multiparameter persistence modules. In particular, we have the following obvious statement. 

\begin{prop}
Let $\cV(\bd(u), \bd(v), r)$ be the set of all morphisms of graded components of persistence modules $M_u \to M_v$, $\dim(M_u) = \bd(u)$ and $\dim(M_v) = \bd(v)$, and such that $\rank(\phi_{u,v}(M)) \leq r$, where $\phi_{u,v}(M): M_u \to M_v$. Then $\cV(\bd(u), \bd(v), r)$ is a determinantal variety. 
\end{prop}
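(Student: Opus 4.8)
The plan is to show that this proposition is a direct translation of the coordinate description set up in the paragraph immediately preceding the statement. First I would fix ordered bases of the two $\KK$-modules $M_u\cong\KK(u)^{\bd(u)}$ and $M_v\cong\KK(v)^{\bd(v)}$. With respect to these bases, the structure map $\phi_{u,v}(M)=x^{v-u}\colon M_u\to M_v$ of any finitely generated multigraded module $M$ with those graded pieces is recorded as a matrix in $\Mat_{\bd(v)\times\bd(u)}(\KK)$, exactly as in the displayed matrix $\phi(M)$ above the statement. Thus the assignment $M\mapsto\phi_{u,v}(M)$ lands in $\Mat_{\bd(v)\times\bd(u)}(\KK)$, and the rank appearing in the statement is the ordinary rank of this matrix, which does not depend on the chosen bases.

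Next I would check that the image of this assignment is all of $\Mat_{\bd(v)\times\bd(u)}(\KK)$: given any matrix $A$ one constructs a finitely generated multigraded $R$-module realizing it, for instance the module supported on the up-set $\{w : w\geq u\}$ that equals $\KK^{\bd(u)}$ on $\{w\geq u,\ w\not\geq v\}$ and equals $\KK^{\bd(v)}$ on $\{w\geq v\}$, with all transition maps within each of these two regions equal to the identity and the transition from the first region into the second equal to $A$. The cocycle identities $\phi_{w',w''}\circ\phi_{w,w'}=\phi_{w,w''}$ hold because within each region the maps are identities and $A\circ\mathrm{id}=\mathrm{id}\circ A=A$, while all maps out of degrees $w\not\geq u$ are zero. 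Hence, as $M$ ranges over the modules allowed in the statement, $\phi_{u,v}(M)$ ranges over exactly $\{A\in\Mat_{\bd(v)\times\bd(u)}(\KK):\rank(A)\leq r\}$.

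Finally I would invoke the definition of a determinantal variety: with $m=\bd(u)$ and $n=\bd(v)$, the locus $\cV(m,n,r)\subset\Mat_{n\times m}(\KK)$ is by definition the set of matrices of rank at most $r$, which is precisely the set identified in the previous step; the argument is valid verbatim for $\KK=\ZZ$, since the definition of $\cV(m,n,r)$ is stated in that generality and the rank condition is cut out by the vanishing of all $(r+1)\times(r+1)$ minors. As the authors note, the statement is ``obvious'', so there is no real obstacle; the one point meriting a line of care is the surjectivity claim of the second step, namely that the cocycle conditions defining persistence modules place no constraint on a single structure map $\phi_{u,v}$, after which the rank stratification of matrix space transports directly onto the set of graded-component morphisms.
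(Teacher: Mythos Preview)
Your proposal is correct. The paper does not actually supply a proof: it introduces the proposition with the words ``we have the following obvious statement'' and moves on, so your unpacking of the definitions is already more than the paper provides. Your surjectivity step---constructing a persistence module realizing an arbitrary matrix $A$ as the structure map $\phi_{u,v}$---is a genuine addition and is carried out correctly; the paper tacitly assumes this point rather than arguing it.
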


\subsection{Determinantal Varieties}

Let $R = \KK[X_{i,j}]$ with $\KK$ any commutative ring (not necessarily $\ZZ$ or a field). In the case $\KK$ is a field, the space 
\[ X = \Hom_{\KK}(\KK^m, \KK^n) \]
is acted on by the algebraic group $\GL = \GL_n(\KK) \times \GL_m(\KK)$ by conjugation. The orbits are then simply given by $\rank(f) = r$ of any point $f \in X$. The orbit closures $X_r$ are given by all $g \in X$ such that $\rank(g) \leq \rank(f)$. However, if $\KK$ is not a field, the ideal of functions in $R$ vanishing on $X_r$ is $I_{r+1}$ generated by the $r+1$-order minors of $X_{i,j}$ and the coordinate ring is Cohen-Macaulay and normal. All of the ideals $I_r$ are $\GL$-invariant. The usual basis for the polynomial ring $R = \KK[X_{i,j}]$ is typically given by the $1 \times 1$-order minors $X_{i,j}$. In what follows, we will take a different basis for $R$, free over $\KK$, regardless of the characteristic of $\KK$.  

A classical characteristic free approach using the Schur functors allows one to define a basis of $\KK[E^* \otimes F]$ in terms of certain \emph{standard monomials} given by standard bi-tableaux. 

On the space $\Mat_{n \times m}(\KK)$ there is an action of $\GL_n(\KK) \times \GL_m(\KK)$. Orbits under this action are given by the rank of a matrix. The orbit closures (with respect to the Zariski topology) are exactly the \textbf{determinantal variety}. It is well known that such varieties are normal, Cohen-Macaulay, and have rational singularities. Let $\KK^{\bd(u)} = V(1),  \KK^{\bd(v)} = V(2)$. The defining ideal in the coordinate ring 
\[ \KK[\Hom(V(1), V(2)]/I = \Sym(V(1) \otimes V(2)^*)/I = \KK[\overline{\mathcal{O}_X}],\]
 is given by the $(r+1) \times (r+1)$ minors of the generic $\bd(v) \times \bd(u)$ matrix $X = (x_{ij})$ of $\bd(u) \cdot \bd(v)$ indeterminates. 

\begin{recall}
	For $X \subset \mathbb{A}_{\KK}^N$ an irreducible variety, the coordinate ring $k[X] = A/I$, where $A = \KK[x_1, ..., x_n]$ is the coordinate ring of the affine space, is \textbf{Cohen-Macaulay} if $\mathbf{pd}_A A/I = \mathit{codim} X$. For $S = \KK[X]$ a domain, $t \in S_{(0)}$ (localization of $S$ at $0$), and $t$ integral over $S$, if 
	\[ t^n+r_{n-1}t^{n-1} + \cdots r_0 = 0 \]
	with all $r_i \in S$, then $t \in S$. This is equivalent to $X$ being nonsingular in codimension $1$. 
\end{recall}

To summarize we have the following
\begin{theorem} (\cite{W1}):
	\begin{enumerate}
		\item The coordinate ring $\KK[\overline{\mathcal{O}_M}]$ is normal and Cohen-Macaulay. 
		\item $I_{r+1}$, the ideal given by the $(r+1) \times (r+1)$ minors of the generic matrix $X$, is the defining ideal of $\overline{\mathcal{O}_M}$. 
		\item The Hilbert function of $\KK[\overline{\mathcal{O}_X}]$ is independent of the characteristic of the base field $\KK$. 
	\end{enumerate}
\end{theorem}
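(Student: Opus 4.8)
The plan is to use two complementary, classical toolkits for determinantal ideals: the combinatorial \emph{standard monomial theory} (the straightening law on bitableaux, in the tradition of Doubilet--Rota--Stein and De Concini--Eisenbud--Procesi, see \cite{DEP}) and the \emph{geometric desingularization} of the determinantal variety by a vector bundle over a Grassmannian (the Kempf--Lascoux--Weyman technique, see \cite{W1}). The first is manifestly characteristic-free and handles parts (2) and (3) together with the Cohen--Macaulay statement in (1); the second re-derives Cohen--Macaulayness geometrically and, in characteristic $0$, additionally produces rational singularities --- that is the only place where the hypothesis $\operatorname{char}\KK = 0$ would be needed, and only for that refinement.

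For part (2), one inclusion is trivial: every $(r+1)\times(r+1)$ minor of $X = (x_{ij})$ vanishes identically on matrices of rank $\le r$, so $I_{r+1} \subseteq I(\overline{\mathcal{O}_M})$. For the reverse it suffices to show that $R/I_{r+1}$ is reduced; combined with the set-theoretic identity $V(I_{r+1}) = \{\rank \le r\} = \overline{\mathcal{O}_M}$ and the irreducibility of the orbit closure, this forces $I_{r+1} = I(\overline{\mathcal{O}_M})$, and in fact $I_{r+1}$ prime. Reducedness --- and considerably more --- comes from the straightening law: the standard bitableaux all of whose shapes have parts $\le r$ form a $\ZZ$-basis of $R/I_{r+1}$; the product of two such basis elements straightens to an explicit $\ZZ$-combination of standard ones with a well-defined leading term; and the associated graded ring for this algebra-with-straightening-law structure is the Stanley--Reisner ring of the order complex of the poset of bounded bitableaux. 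That poset is \emph{wonderful} in the sense of \cite{DEP}, so its Stanley--Reisner ring is reduced and Cohen--Macaulay of dimension $r(\bd(u)+\bd(v)-r)$, and both properties, together with the dimension $r(\bd(u)+\bd(v)-r) = \dim \overline{\mathcal{O}_M}$, pass from the associated graded back to $R/I_{r+1}$. Part (3) is then immediate: the bounded standard bitableaux are singled out by a purely combinatorial rule that does not involve $\KK$ and stay a basis after $\otimes_{\ZZ}\KK$; since straightening is degree-preserving, each graded component of $R/I_{r+1}$ has a dimension independent of $\operatorname{char}\KK$, so the Hilbert function is the universal one (the classical product formula inherited from the Grassmannian).

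Cohen--Macaulayness in part (1) is thus already in hand. Normality then follows from Serre's criterion: $S_2$ is automatic from Cohen--Macaulayness, and $R_1$ holds because the singular locus of $\overline{\mathcal{O}_M}$ is the rank $\le r-1$ locus, which sits in $\overline{\mathcal{O}_M}$ in codimension $\bd(u) + \bd(v) - 2r + 1 \ge 2$ whenever $r < \min(\bd(u), \bd(v))$. For the record --- and to obtain rational singularities in characteristic $0$ --- one may instead argue geometrically: let $\pi\colon Z \to \overline{\mathcal{O}_M}$ be the Kempf collapsing, where
\[ Z = \{(f, W) \in \Hom(V(1), V(2)) \times \Gr(r, V(2)) \ :\ \im f \subseteq W\} \]
is the total space of a vector bundle over the Grassmannian, so that $Z$ is smooth and $\pi$ proper and birational. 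Pushing $\mathcal{O}_Z$ down through $\Gr(r, V(2))$ yields a graded complex of free $R$-modules assembled from the sheaf cohomology of Schur functors of the tautological quotient bundle; the vanishing of the higher cohomology (Bott vanishing in characteristic $0$, its characteristic-free replacement together with flatness over $\ZZ$ in general) makes this complex a free resolution of $\KK[\overline{\mathcal{O}_M}]$ of length $(\bd(u)-r)(\bd(v)-r) = \operatorname{codim}\overline{\mathcal{O}_M}$, whence Cohen--Macaulayness by Auslander--Buchsbaum, while $\pi_*\mathcal{O}_Z = \mathcal{O}_{\overline{\mathcal{O}_M}}$ with $Z$ normal gives normality and, in characteristic $0$, $R^{>0}\pi_*\mathcal{O}_Z = 0$ gives rational singularities.

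The main obstacle is the characteristic-free input. In positive characteristic one cannot invoke Bott's vanishing theorem, so the technical heart of the argument is to establish --- equivalently --- either the straightening law together with the wonderfulness (hence Cohen--Macaulayness) of the poset of bounded bitableaux, or the characteristic-free vanishing $R^i\pi_*\mathcal{O}_Z = 0$ for $i > 0$ (via good filtrations of the relevant homogeneous bundles, flatness over $\ZZ$, and semicontinuity). Once either is secured, everything else --- the easy inclusion in (2), reducedness and the dimension count, the ascent of Cohen--Macaulayness, the Serre-criterion verification, and the base-change invariance of the Hilbert function --- is formal.
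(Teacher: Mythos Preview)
The paper does not prove this theorem; it merely records it as a known result, attributed to \cite{W1}, and then moves on to the next theorem (the Cauchy filtration, attributed to \cite{DEP}). So there is no ``paper's own proof'' against which to compare.

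That said, your sketch is a correct and essentially complete outline of the two classical proofs of this result. The straightening-law route (Doubilet--Rota--Stein, De~Concini--Eisenbud--Procesi) is exactly the characteristic-free argument the paper itself invokes in the surrounding discussion of standard bi-tableaux and the filtration $\bigoplus_\lambda S_\lambda M_u \otimes S_\lambda M_v^*$; it delivers the $\ZZ$-basis of bounded standard bitableaux, hence reducedness and part~(3), and the ASL/wonderful-poset machinery yields Cohen--Macaulayness of the correct dimension. Your Serre-criterion check for normality is fine, including the codimension count $\bd(u)+\bd(v)-2r+1 \ge 2$ for the singular locus when $r<\min(\bd(u),\bd(v))$. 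The Kempf--Lascoux--Weyman collapsing you describe is the approach behind the cited reference \cite{W1}; it reproves Cohen--Macaulayness via the Lascoux resolution and gives rational singularities in characteristic zero as a bonus. Nothing here is wrong or missing; you have correctly identified that the only delicate point is the characteristic-free vanishing input, and named the two standard ways to secure it.
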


\begin{theorem}
(\cite{DEP}): The coordinate ring $\KK[\overline{\mathcal{O}_X}]$ has a $\GL_{\bd(u)}(\KK) \times \GL_{\bd(v)}(\KK)$ filtration given by Schur functors of the form
\[ \bigoplus_{\lambda} S_{\lambda}M_u \otimes S_{\lambda} M_v^* \]
\end{theorem}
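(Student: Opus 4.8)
The plan is to deduce this from the De Concini--Eisenbud--Procesi straightening law: the characteristic-free Cauchy formula for the polynomial ring, together with an identification of the determinantal ideal inside the resulting filtration. Throughout, write $A = \Sym(M_u \otimes M_v^*) = \KK[x_{ij}]$ for the coordinate ring of $\Hom(M_u, M_v)$, so that by part (2) of the preceding theorem (which we may assume) $\KK[\overline{\mathcal{O}_X}] = A/I_{r+1}$, where $I_{r+1}$ is the ideal generated by the $(r+1)\times(r+1)$ minors of the generic matrix $X=(x_{ij})$. The group $\GL(M_u)\times\GL(M_v)$ acts on $\Hom(M_u,M_v)$, hence on $A$, and $I_{r+1}$ is stable under this action because it is cut out intrinsically by a rank condition; so the assertion is really about a $\GL\times\GL$-equivariant filtration of $A/I_{r+1}$.

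First I would invoke the bitableaux basis of $A$ from \cite{DEP}. For a partition $\lambda$ and tableaux $S,T$ of shape $\lambda$, with $S$ filled from the row indices and $T$ from the column indices of $X$, the bitableau $(S\mid T)$ is the product over the rows of $\lambda$ of the corresponding minors of $X$. The straightening law says that an arbitrary product of minors is a $\ZZ$-linear combination of the \emph{standard} bitableaux --- those with $S$ and $T$ both standard in the sense of Section \ref{Young tableaux} --- and, crucially, that the shapes occurring in a straightening relation are controlled: starting from shape $\lambda$, only shapes $\mu \succeq \lambda$ appear, for a fixed total order refining the box-moving (dominance-type) order on partitions. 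Hence, letting $A_{\succeq\lambda}$ be the $\KK$-span of the standard bitableaux of shape $\succeq\lambda$, the family $\{A_{\succeq\lambda}\}$ is a descending $\GL\times\GL$-stable filtration of $A$ whose associated graded in degree $\lambda$ is spanned by the standard bitableaux of shape exactly $\lambda$. Since the standard tableaux of shape $\lambda$ form a $\KK$-basis of the Schur module $L_\lambda$ (recalled in Section \ref{Young tableaux}), and since the row and column indices range independently, this graded piece is $L_\lambda M_u \otimes L_\lambda M_v^* = S_\lambda M_u \otimes S_\lambda M_v^*$; this is the characteristic-free Cauchy formula, and over a field of characteristic zero the filtration splits into the honest direct sum $A = \bigoplus_\lambda S_\lambda M_u \otimes S_\lambda M_v^*$.

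Next I would locate $I_{r+1}$ inside this filtration. By \cite{DEP} the determinantal ideal $I_{r+1}$ has as a $\KK$-basis precisely the standard bitableaux whose shape $\lambda$ has at least $r+1$ rows, i.e. $\ell(\lambda)\ge r+1$: an $(r+1)$-minor is itself a bitableau of shape $(1^{r+1})$, and shape-control in the straightening law keeps products with it inside the span of shapes with $\ell\ge r+1$; conversely, any standard bitableau with $\ell(\lambda)\ge r+1$ has a first column of length $\ge r+1$ and so is visibly an $(r+1)$-minor times another bitableau. Therefore $I_{r+1}$ is a saturated piece of the filtration, and the induced filtration on $\KK[\overline{\mathcal{O}_X}] = A/I_{r+1}$ has associated graded
\[ \bigoplus_{\ell(\lambda)\le r} S_\lambda M_u \otimes S_\lambda M_v^*, \]
all of whose summands are nonzero since automatically $\ell(\lambda)\le r\le\min(\bd(u),\bd(v))$. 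This is the claimed statement; as a byproduct, the index set of shapes and hence the graded character are manifestly independent of $\KK$, which re-proves part (3) of the preceding theorem.

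The genuine obstacle is the straightening law itself --- that products of minors straighten over $\ZZ$ to standard bitableaux with the stated control on shapes, and that this identifies $I_{r+1}$ with the span of the long-shape bitableaux. This is the technical core of \cite{DEP}, and is exactly what makes the whole argument characteristic-free; I would take it as a black box. Granting it, the only care needed on our side is combinatorial bookkeeping: verifying that $A_{\succeq\lambda}$ really is $\GL\times\GL$-stable (which uses the shape-control, since a change of basis turns a minor into a combination of possibly non-standard minors of the same size), and keeping straight the two conventions --- a shape versus its conjugate, and "number of rows" versus "number of columns" --- so that it is indeed $\ell(\lambda)$, not $\lambda_1$, that is bounded by $r$ in the quotient.
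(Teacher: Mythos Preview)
Your proposal is correct and takes the same approach as the paper: both reduce to the characteristic-free Cauchy formula of \cite{DEP} for $\Sym(M_u\otimes M_v^*)$. The paper's proof is in fact terser than yours --- it simply records the isomorphism $\KK[\Hom_{\KK}(M_u,M_v)]\cong\bigoplus_{n\ge 0}\Sym^n(M_u\otimes M_v^*)$ and cites Cauchy for the filtration, without explicitly treating the passage to the quotient by $I_{r+1}$ --- so your identification of the determinantal ideal with the span of bitableaux of shape $\ell(\lambda)\ge r+1$ actually goes beyond what the paper supplies.
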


\begin{proof}
Let us first note the following isomorphisms,
\begin{align*}
\KK[\Hom_{\KK}(M_u, M_v)] &\cong \Sym(M_u \otimes M_v^*)\\
										 &\cong \bigoplus_{n \geq 0} \Sym^n(M_u \otimes M_v^*).
\end{align*}
Also, 
There is a natural filtration of $\Sym^n(M_u^* \otimes M_v)$ with the associated graded object
\[ \bigoplus_{|\lambda| = n} S_{\lambda}M_u\otimes S_{\lambda}M_v^*. \]
This is the well known \emph{Cauchy Formula} \cite{DEP} 
\end{proof}

We define a \textbf{standard bi-tableau}, denoted $(s|t)$ as a pair of standard tableaux $s$ and $t$ of the same shape $\lambda$, where by convention we write $s$ in reverse order. To a bitableau we associate a product of minors of $X$, where the entries of row $a$ of $s$ give the columns and the entries of row $a$ of $t$ give the rows of the minor of $X$. Then each pair of rows in $s$ and $t$ defines a minor of size $\lambda_a$, and the bi-tableau is associated to the product of these minors. It is well known that the standard bi-tableaux with at most $\min\{\dim V(1), \dim V(2)\} = \{\bd(u), \bd(v)\}$ columns forms a $\KK$-free basis of $\KK[V(1)^* \otimes V(2)]$, and that the standard bi-tableaux with at most $k$ columns form a $\KK$-free basis of $R/I_{k+1}$. We call the associated minors to a standard bi-tableau a \emph{standard monomial} in $R$ or $R/I_{k+1}$. It can be shown that any nonstandard monomial can be written as a sum of standard monomials each of which are earlier in the partial order on monomials. In particular if $m$ is nonstandard then we can write $m = \sum_i n_i m_i + \sum_jn_j m_j$, where $m_i$ are standard and of the same shape as the tableau corresponding to $m$, and the $m_j$ are standard but of a smaller shape in the order on tableau. \\
\\
Let $\Gr(n,m+n)$ denote the Grassmannian variety of $n$-dimensional subspaces of an $(m+n)$-dimensional vector space $V$. Let $\{w_1, ..., w_n\}$ be a basis for an $n$-dimensional subspace $W \subset V$. Then the vector $w_1 \wedge \cdots \wedge w_n$ determines a point $[W]$ in the projective space $\mathbb{P}(\bigwedge^nV)$, and the map $W \mapsto [W]$ is a one-to-one correspondence between $n$-dimensional subspaces of $V$ and points in $\mathbb{P}(\bigwedge^n V)$ (which are lines in $\bigwedge^nV$). For the standard basis $\{e_i\}_{i=1}^{m+n}$ of $V$, we have the standard basis $\{e_{i_1} \wedge \cdots \wedge e_{i_n}: i_1< \cdots < i_n\}$ of $\bigwedge^n V$. If to any $v_1 \wedge \cdots \wedge v_n \in \bigwedge^n V$ we associate the matrix 
\[X = \begin{pmatrix}
v_1 \\
\vdots \\
v_n
\end{pmatrix} = \begin{pmatrix}
v_{1,1} & v_{1,2} & \cdots & v_{1,m+n} \\
v_{2,1} & v_{2,2} & \cdots & v_{2,m+n} \\
\vdots & \vdots  & \ddots &  \vdots \\
v_{n,1} & v_{n,2} & \cdots & v_{n,m+n}
\end{pmatrix},\]
we have that in the basis $\{e_{i_1} \wedge \cdots \wedge e_{i_n}:i_1 < \cdots < i_n\}$ the coordinates of $v_1 \wedge \cdots \wedge v_n$ are given by the maximal minors of $X$. Let $[i_1, i_2, ..., i_n]$ denote the minor given by columns $i_1, i_2, ..., i_n$ of $X$. So,
\[ v_1 \wedge \cdots \wedge v_n = \sum_{1 \leq i_1 \leq \cdots \leq i_n \leq m+n} [i_1, ..., i_n]e_{i_1} \wedge \cdots \wedge e_{i_n}.\]
The open set $S_{n,m+1} = \{(v_1, ..., v_n) \in V^n: v_1 \wedge \cdots \wedge v_n \neq 0\} \subset \Hom(K^{m+n}, K^n)$ of maximal rank $n \times (m+n)$-matrices is called the \textbf{Stiefel manifold}. Now, $\Gr(n,m+n)$ can be identified with the orbits in $S_{n,m+n} \subset \Hom(\KK^{m+n}, \KK^n)$ under the action of $\GL_n(\KK)$ by left multiplication. Further, given a homomorphism $\pi: \KK^r \to \KK^{r+s}$ of two affine spaces, of the form
\[ \pi(x_1, ..., x_r) = (x_1, ..., x_r, p_1, ..., p_s) \]
where $p_i$ are polynomials in the $x_j$, the image of $\pi$ is a closed subvariety of $\KK^{r+s}$, and $\pi$ is an isomorphism of $\KK^r$ onto its image (it is the \emph{graph} of a polynomial map). The defining ideal is $I = (x_{r+i}-p_i)$, and the inverse map is
\[ (x_1, ..., x_r, ..., x_{r+s}) \mapsto (x_1, ..., x_r).\]
Now, consider the open set $U$ of $\Gr(n,m+n)$ where the Pl\"{u}cker coordinate given by the minor coming from the last $n$ columns is nonzero. $U$ can be identified with the space of $n \times m$ matrices. The association is given by associating any $X$ with the row span of the matrix
\[ (X| \tilde{\id_n}) = \begin{pmatrix}
x_{11} & \cdots & x_{1m} & 0 & \cdots & 0 & 1 \\
x_{21} & \cdots & x_{2m} & 0 & \cdots & 1 & 0 \\
\vdots & \ddots & \vdots & \vdots & \iddots   & \iddots & \vdots \\
x_{n1} & \cdots & x_{nm} & 1         &  0 & \cdots & 0 
\end{pmatrix}.\]
One may now identify Pl\"{u}cker coordinates with \textbf{bi-tableaux}. 
\begin{ex}\label{Plucker relations}
	Let $n=3$, $m=5$, and write
	\[ X = \begin{pmatrix}
	x_{31} & x_{32} & x_{33} & x_{34} & x_{35} & 1 & 0 & 0 \\
	x_{21} & x_{22} & x_{23} & x_{24} & x_{25} & 0 & 1 & 0 \\
	x_{11} & x_{12} & x_{13} & x_{14} & x_{15} & 0 & 0 & 1 \\
	\end{pmatrix}.\]
	Now let the Pl\"{u}cker coordinate $[1,2,3][1,2,5][1,2,7][2,3,6][5,6,7][5,6,8]$ be represented by the tableau
	\[ \young(123,125,127,238,567,568). \quad (*) \]
	To this we may associate the bi-tableau
	\[ \begin{pmatrix}
	\young(123,123,13,12,1,2),  \young(123,125,12,23,5,5)
	\end{pmatrix} \quad (**) \]
	Neither the tableau $(*)$, nor the double tableau $(**)$ are standard, and thus we may use the quadratic relations on Pl\"{u}cker coordinates to "straighten" the tableau $(*)$, which induces a straightening of the double tableau $(**)$ to a standard bi-tableau. These quadratic relations can be seen as the shuffling relations coming from the definition of the Schur functors from the previous section on Schur functors and Young tableaux. 
\end{ex}

We consider the Pl\"{u}cker coordinate standard if and only if the associated tableau is, i.e. the tableau is strickly increasing along rows, and nondecreasing along columns. In this example, the coordinate is not standard. We have a \textbf{straightening law} on the coordinates which is given by the definition of the Schur-functors and the relations among tableaux induced by the multilinear map defining them, i.e. the \textbf{shuffling relations}. One may also view the straightening law on standard bi-tableaux as a consequence of the quadratic relations on Pl\"{u}cker coordinates in the Grassmannian. If we take the perspective of double tableaux, we let the left tableau be the "row tableau", with indices $j \in [1,n]$ and the right tableau as the "column tableau" with indices $i \in [1,m+n]$. Each pair of rows gives a minor of the matrix $X$. We think of the space of one line tableau of size $k$ as a vector space $M_k$, with basis $(j_k, ..., j_i|i_1, ..., i_k)$, so that if two indices on the right or left are equal, then the symbol is zero, and the symbols are alternating separately on the left and right. For any partition $\lambda:= m_1 \geq m_2 \geq \cdots \geq m_r$, the tableaux of shape $\lambda$ can be thought of as a tensor product $M_{m_1} \otimes M_{m_2} \otimes \cdots \otimes M_{m_r}$. Evaluating a formal tableau as a product of minors gives a nontrivial kernel, which is the space spanned by the shuffling relations, or equivalently the straightening law. The action of $\GL(n) \times \GL(m)$ on $\KK[X]$ induces an action by the two groups of diagonal matrices, and the content of a bi-tableau is a weight vector for the product of the two algebraic tori. In particular If $(A,B) = (a_i) \times (b_j) \in T(n) \times T(m)$ is an element of the product of the tori $T(n) \subset \GL(n)$ and $T(m) \subset \GL(m)$, then the weight is $(\prod_{i+1}^n a_i^{-k(i)}; \prod_{j=1}^{m+n} b_j^{h(j)})$.

\subsection{Presentations of Persistence Modules}

Now, suppose we would like to study persistence modules with a given minimal free presentation
\[ F_1 \to F_0 \]
This can be broken down into the study of maps between two free multigraded modules. In particular, if we focus out attention on the maps $\ttop(F_1) \to \ttop(F_0)$, then we are in the situation of determinantal varieties,  since the maps of the tops can be represented as maps of the form
\[ \bigoplus_{u} M_u \to \bigoplus_v M_v \]
Now, these maps may not decompose into a direct sum of maps
\[ M_u \to M_v \]
so we will need to be careful in our study of them. In particular, suppose we have the following situation. 

\begin{center}
\begin{tikzcd}
{M(u_1, u_2+1)} &  &  &  &  &  &  \\
 &  &  &  &  &  &  \\
{M(u_1, u_2)} \arrow[uu] \arrow[rr] &  & {M(u_1+1, u_2)} \arrow[rr] &  & {M(u_1+2, u_2)} &  &  \\
 &  &  &  &  &  &  \\
 &  &  &  & {M(u_1+2, u_2-1)} \arrow[uu] \arrow[rr] &  & {M(u_1+3, u_2-1)}
\end{tikzcd}
\end{center}

Suppose that $\dim M(i,j) = \bd(i,j) = 1$ for each space in the diagram and that all maps are given by the identity. Then the map 
\[ \phi: M(u_1, u_2) \oplus M(u_1+2, u_2-1)  \oplus M(u_1+1, u_2) \to M(u_1, u_2+1) \oplus M(u_1+2, u_2) \oplus M(u_1+3, u_2-1) \oplus M(u_1+1, u_2) \]

can be represented as a vector space map
\[ \phi: \KK^6 \to \KK^6 \]
given by the map on the standard basis in the following diagram

\begin{center}
\begin{tikzcd}
e_1 & e_1 \\
e_2 \arrow[ru] \arrow[rd] & e_2 \\
e_3 \arrow[rd] & e_3 \\
e_4 & e_4 \\
e_5 \arrow[ru] \arrow[rd] & e_5 \\
e_6 & e_6
\end{tikzcd}
\end{center}
As endomorphisms on $\KK^6$ this is represented by the matrices
\[
X = \begin{pmatrix}
0 & 1 & 0 & 0 & 0 & 0 \\
0 & 0 & 0 & 0 & 0 & 0 \\
0 & 0 & 0 & 0 & 0 & 0 \\
0 & 0 & 0 & 0 & 1 & 0 \\
0 & 0 & 0 & 0 & 0 & 0 \\
0 & 0 & 0 & 0 & 0 & 0 
\end{pmatrix},  \quad \quad
Y = \begin{pmatrix}
0 & 0 & 0 & 0 & 0 & 0 \\
0 & 0 & 0 & 0 & 0 & 0 \\
0 & 1 & 0 & 0 & 0 & 0 \\
0 & 0 & 1 & 0 & 0 & 0 \\
0 & 0 & 0 & 0 & 0 & 0 \\
0 & 0 & 0 & 0 & 1 & 0 
\end{pmatrix}
\]
Note that this is an indecomposable finite dimensional representation of $\KK[x,y]/(xy)$. In particular, there is no way to decompose $V = \KK^6$ as a direct sum since $X$ and $Y$ cannot be simultaneously written as block diagonal matrices acting on complementary subspaces. Indeed, this would amount to dividing the matrix
\[
\begin{pmatrix}
0 & 1 & 0 & 0 & 0 & 0 \\
0 & 0 & 0 & 0 & 0 & 0 \\
0 & 1 & 0 & 0 & 0 & 0 \\
0 & 0 & 1 & 0 & 1 & 0 \\
0 & 0 & 0 & 0 & 0 & 0 \\
0 & 0 & 0 & 0 & 1 & 0 
\end{pmatrix}
\]
in such a way as to produce a block diagonal matrix.

\subsection{Fitting Ideals}

Proofs to the results which are not proven here can be found in \cite{E} Chapter $20$. Suppose $\phi: F \to G$ is a map of two free modules. Then $I_j(\phi)$ is the image of 
\[ \bigwedge^j F \otimes \bigwedge^j G^* \to S,\]
which is induced by 
\[ \bigwedge^j \phi : \bigwedge^j F \to \bigwedge^j G.\]
Choosing bases $\{f_i\}$ and $\{g_k\}$ for the free modules and representing $\phi$ in these bases as a matrix means that $I_j(\phi)$ can be realized as the ideal generated by the size $j$-minors of $\phi$. The convention we follow is that $I_0(\phi) = S$. The ideals $I_j(\phi)$ define invariants of finitely generated modules.

\begin{defn}
Let $M$ be a finitely generated module over a ring $S$ and let $\phi: F \to G \to M \to 0$ and $\phi': F' \to G' \to M \to 0$ be two free presentations such that $\rank(G) = r$ and $\rank(G') = r'$. Then for all $j \geq 0$ in $\ZZ$ we have 
\[ I_{r-j}(\phi) = I_{r'-j}(\phi ').\]
We define the $j^{th}$ \textbf{Fitting invariant} of $M$ to be the ideal 
\[ \Fitt_j(M) = I_{r-j}(\phi) \subset S.\]
\end{defn}

\begin{theorem}
The Fitting ideals commute with base change, so for any ring homomorphism $S \to S'$ we have 
\[ \Fitt_j(M \otimes_S S') = (\Fitt_j(M))S'.\]
\end{theorem}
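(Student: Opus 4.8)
The plan is to reduce everything to a single presentation of $M$ over $S$ together with the elementary fact that minors of a matrix are polynomial in its entries, hence compatible with ring homomorphisms. Since $M$ is finitely generated over $S$, I would fix a finite free presentation
\[ F \xrightarrow{\ \phi\ } G \to M \to 0, \qquad G = S^r. \]
Applying the right-exact functor $-\otimes_S S'$ gives an exact sequence
\[ F \otimes_S S' \xrightarrow{\ \phi\otimes 1\ } G \otimes_S S' \to M \otimes_S S' \to 0, \]
and $G\otimes_S S' \cong (S')^r$ is free of the same rank $r$, so this is a bona fide free presentation of $M\otimes_S S'$ over $S'$ whose middle term has rank $r$. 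Invoking the Definition above (in particular the independence of $\Fitt_j$ from the chosen presentation), we get $\Fitt_j(M\otimes_S S') = I_{r-j}(\phi\otimes 1)$.

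The heart of the matter is then the identity $I_t(\phi\otimes 1) = I_t(\phi)S'$ for every $t$. Choosing bases, represent $\phi$ by a matrix $A=(a_{ik})$ with entries in $S$; in the induced bases, $\phi\otimes 1$ is represented by the matrix $A'=(f(a_{ik}))$ obtained by applying the structure map $f:S\to S'$ entrywise. Every $t\times t$ minor of $A'$ is the image under $f$ of the corresponding minor of $A$ — a determinant is a universal polynomial in the matrix entries, hence commutes with $f$ — so the $t\times t$ minors of $A'$ are exactly $\{f(m): m \text{ a } t\times t \text{ minor of } A\}$, and the ideal they generate in $S'$ is precisely the extension $I_t(\phi)S'$. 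Intrinsically this is just the statement that the map $\bigwedge^t F \otimes \bigwedge^t G^* \to S$ defining $I_t(\phi)$ base-changes to the analogous map for $\phi\otimes 1$, because $\bigwedge$ and $\otimes$ commute with base change on finite free modules. Combining, $\Fitt_j(M\otimes_S S') = I_{r-j}(\phi\otimes 1) = I_{r-j}(\phi)S' = (\Fitt_j(M))S'$.

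I do not expect a substantive obstacle here; the only points requiring care are bookkeeping ones. First, one must be sure that tensoring a presentation yields a presentation — this is exactly right-exactness of $-\otimes_S S'$ — and that the rank of the free target of the relations is unchanged, which is clear. Second, and this is what makes the one-line computation legitimate, one relies on the already-established well-definedness of $\Fitt_j$, so that it may be computed from the base change of a presentation over $S$ rather than from an arbitrary presentation over $S'$. Everything else is the observation that minors are polynomial in the entries.
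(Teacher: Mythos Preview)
Your argument is correct and is precisely the standard one: tensor a chosen free presentation, use right-exactness to see that you still have a presentation with target of the same rank, and then observe that $t\times t$ minors, being polynomial in the matrix entries, are carried by the structure map $f:S\to S'$ to the corresponding minors of the base-changed matrix, so $I_t(\phi\otimes 1)=I_t(\phi)S'$. The only subtlety you flag---that well-definedness of $\Fitt_j$ is needed so one may compute it from the tensored presentation rather than from an arbitrary $S'$-presentation---is exactly the right point to note, and you handle it.

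As for comparison: the paper does not supply its own proof of this statement. It explicitly defers all unproven results in that subsection to \cite{E}, Chapter~20, and this theorem is stated without argument. Your proof is essentially the one given there (Eisenbud, Corollary~20.5), so there is no meaningful divergence to discuss.
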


\begin{theorem}
If the ring $(S,\fm)$ is local, then a module $M$ can be generated by $j$ elements if and only if $\Fitt_j(M)=S$. The closed subset of $\Spec(S)$ given by $\Fitt_j(M)$ is the set of all prime ideals $\fp$ such that $M_{\fp}$ cannot be generated by $j$ elements. 
\end{theorem}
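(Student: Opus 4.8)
The plan is to deduce both assertions from two ingredients that are already available: the base-change property of Fitting ideals (the theorem immediately above) and Nakayama's lemma. I would begin by recording the easy half of the first assertion, which requires no hypothesis on $S$: if $M$ is generated by $j$ elements, choose a free presentation $S^{t} \xrightarrow{\phi} S^{j} \to M \to 0$, so that $\rank G = j$, and then by definition $\Fitt_j(M) = I_{j-j}(\phi) = I_0(\phi) = S$.

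For the converse I would pass to the residue field $\bk := S/\fm$. By base change, $\Fitt_j(M/\fm M) = \Fitt_j(M \otimes_S \bk) = (\Fitt_j M)\bk$, the image of the ideal $\Fitt_j M$ in $\bk$; because $S$ is local this image is $0$ precisely when $\Fitt_j M \subseteq \fm$, i.e. when $\Fitt_j M \neq S$, and equals $\bk$ precisely when $\Fitt_j M = S$. Hence $\Fitt_j M = S$ if and only if $\Fitt_j(V) = \bk$ for the finite-dimensional $\bk$-vector space $V := M/\fm M$, and I am reduced to the linear-algebra statement $\Fitt_j(V) = \bk \iff \dim_{\bk} V \leq j$. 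For $j \geq d := \dim_\bk V$ this follows from the easy half above, since $V$ is generated by $d \leq j$ elements; for $j < d$, use the presentation with $F = 0$ and $G = \bk^{d}$ mapping isomorphically onto $V$, so that $\phi$ is represented by the empty $d \times 0$ matrix and $I_{d-j}(\phi) = (0)$, there being no minors of size $d - j \geq 1$. Finally, Nakayama's lemma identifies $\dim_\bk(M/\fm M)$ with the minimal number of generators of $M$, so $\dim_\bk(M/\fm M) \leq j$ is equivalent to "$M$ can be generated by $j$ elements", which finishes the first assertion.

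For the statement about $\Spec(S)$ I would localize rather than quotient. Given $\fp \in \Spec(S)$, one has $\Fitt_j M \subseteq \fp$ if and only if $(\Fitt_j M)S_\fp \neq S_\fp$: if some $a \in \Fitt_j M$ lies outside $\fp$ then $a$ becomes a unit in $S_\fp$, while conversely $\Fitt_j M \subseteq \fp$ forces $(\Fitt_j M)S_\fp \subseteq \fp S_\fp$. By base change along $S \to S_\fp$ this says $\Fitt_j(M_\fp) \neq S_\fp$, and applying the first part of the theorem to the local ring $S_\fp$ shows this is equivalent to $M_\fp$ not being generated by $j$ elements. Thus the closed set cut out by $\Fitt_j M$ is exactly $\{\fp : M_\fp \text{ is not generated by } j \text{ elements}\}$.

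The argument is almost entirely formal once base change is in hand; I do not expect a serious obstacle, but the one genuinely computational point is the Fitting ideal of a vector space (where one must be careful with the empty-matrix case and the convention $I_0 = S$), and the one place where locality is indispensable — and where the statement would fail without it — is the implication $\Fitt_j M = S \Rightarrow M$ generated by $j$ elements, which rests squarely on Nakayama's lemma.
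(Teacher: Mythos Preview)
Your argument is correct and is essentially the standard proof. The paper does not supply its own proof of this theorem: at the start of the Fitting ideals subsection it defers to \cite{E}, Chapter~20, and states the result without argument. What you have written is precisely the proof one finds there (Proposition~20.6 in Eisenbud): reduce the local statement to the residue field via base change, settle the vector-space case by inspecting a minimal presentation, and invoke Nakayama; then obtain the $\Spec(S)$ statement by localizing and applying the first part to $S_\fp$.
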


%\begin{theorem}
% For 
%\end{theorem}

%------------------------------------------------------------------------------------------
%
%------------------------------------------------------------------------------------------
\section{Generic Free Resolutions}\label{Generic Structure}

\subsection{Bruns' Generic Exactification}
Let $R = \KK[x_1, ..., x_n]$, and let 
\[
\xymatrix{
R^{b_0} & R^{b_{1}} \ar[l]_{x^{1}} & \cdots \ar[l]_{x^2} & R^{b_{n-1}} \ar[l]_{x^{n-1}} & R^{b_n}  \ar[l]_{x^n}& \ar[l] 0
}
\]
be a finite free resolution. The ranks of the cokernels of $x^i$ must be nonnegative, so 
\[
r_k = \sum_{j=k}^n (-1)^{k-j}b_j \geq 0
\]
for $k=0, ..., n$. We call this the \textbf{rank conditions} for $(b_0, b_1, ..., b_n)$.

\begin{theorem}
(Bruns \cite{B}): Let $(b_0, b_1 ..., b_n)$ be a sequence of nonnegative integers satisfying the rank condition $r=(r_1, ..., r_n)$, i.e. $b_k = r_k+r_{k-1}$, where we formally define $r_k = 0$ for $k \notin \{1, ..., n\}$. Then there exists a generic free resolution $(S, \FF_{\bullet})$ of type $(b_0, b_1 ..., b_n)$ in which $S$ is a countably generated $\ZZ$-algebra. 
\end{theorem}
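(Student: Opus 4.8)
The plan is to construct $S$ and $\FF_\bullet$ in two stages: first write down the ``most generic'' complex of free modules having the prescribed format, and then enlarge its base ring by Bruns' \emph{generic exactification} until the Buchsbaum--Eisenbud acyclicity criterion holds.

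First I would set up the generic complex. Over $\ZZ$ introduce, for each $i = 1, \dots, n$, a matrix of indeterminates $\Phi_i = (x^{(i)}_{jk})$ of size $b_{i-1} \times b_i$; let $P$ be the polynomial ring over $\ZZ$ in all of their entries, and put $R_0 = P/\mathfrak{a}_0$, where $\mathfrak{a}_0$ is generated by the entries of the products $\Phi_i\Phi_{i+1}$ (encoding $d \circ d = 0$) together with the $(r_i+1)\times(r_i+1)$ minors of each $\Phi_i$ (encoding $\operatorname{rank}\Phi_i \le r_i$). Over $R_0$ the $\Phi_i$ assemble into a complex $\mathbb{G}^{(0)}_\bullet \colon 0 \to R_0^{b_n} \to \cdots \to R_0^{b_1} \to R_0^{b_0}$ of the required format, and one checks that the $\Phi_i$ carry their prescribed ranks $r_i$ at generic points, so that the ``rank conditions'' hold. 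By the acyclicity criterion, $\mathbb{G}^{(0)}_\bullet$ is a genuine free resolution precisely when $\operatorname{grade} I_{r_i}(\Phi_i) \ge i$ for every $i = 1, \dots, n$.

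These grade inequalities generally fail over $R_0$, because imposing $\Phi_i\Phi_{i+1} = 0$ puts the matrices in special position and can pull the grades of their minor ideals below the generic (expected) value. The second stage repairs this by exactification. In the abstract, given a complex of the format over a $\ZZ$-algebra $R_k$ for which some $\operatorname{grade} I_{r_i}(\Phi_i) < i$, one adjoins finitely many new indeterminates together with relations that reposition the differentials ``as generically as the constraint $d \circ d = 0$ permits,'' thereby raising the deficient grades; a concrete device is to factor each $\Phi_i = A_iB_i$ through a generic $r_i$-by-$b_i$ matrix (so $B_i \colon R_k^{b_i} \to R_k^{r_i}$ and $A_i \colon R_k^{r_i} \to R_k^{b_{i-1}}$ are new generic matrices) and to replace $\Phi_i\Phi_{i+1} = 0$ by the stronger relation $B_iA_{i+1} = 0$ --- since $b_i = r_i + r_{i+1}$, the string $R_k^{r_{i+1}} \xrightarrow{A_{i+1}} R_k^{b_i} \xrightarrow{B_i} R_k^{r_i}$ becomes generically exact, and minor ideals of \emph{generic} matrices carry their maximal grade. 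Iterating over all $i$ and all grade defects yields a countable tower $R_0 \to R_1 \to R_2 \to \cdots$ of $\ZZ$-algebras, each finitely generated over its predecessor; set $S = \varinjlim_k R_k$, a countably generated $\ZZ$-algebra. Over $S$ every one of the finitely many inequalities $\operatorname{grade} I_{r_i}(\Phi_i) \ge i$ holds, so $\FF_\bullet := \mathbb{G}^{(0)}_\bullet \otimes_{R_0} S$ is a finite free resolution of type $(b_0, \dots, b_n)$; and since, by construction, any $\ZZ$-algebra equipped with an acyclic complex of this format receives a compatible homomorphism from $S$, the pair $(S, \FF_\bullet)$ is generic.

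The hard part is the grade bookkeeping in the exactification. One must show that each modification genuinely increases the grade of the targeted minor ideal --- which leans on the known grade computations for determinantal ideals of generic matrices --- and, crucially, that grade does not drop along the transition maps $R_k \to R_{k+1}$ nor in the colimit, so that after countably many steps all of $\operatorname{grade} I_{r_i}(\Phi_i) \ge i$ hold \emph{simultaneously}. One must also reconcile the fact that the tower need not stabilize (this is exactly why $S$ is only claimed to be a countably generated $\ZZ$-algebra, not a Noetherian one) with the universal property that makes $(S, \FF_\bullet)$ truly ``generic'' rather than merely acyclic; checking that this universality is preserved in the colimit is the subtlest point.
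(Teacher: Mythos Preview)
Your high-level architecture (build a countable tower of $\ZZ$-algebras and take the colimit) matches the paper, but the mechanism you use at each stage is different from Bruns' and, as you yourself flag, is where the argument is incomplete. The paper does \emph{not} work through the Buchsbaum--Eisenbud grade criterion. Instead, starting from $S_0 = \ZZ[X^k_{i,j}]/(\text{entries of }X^kX^{k-1})$ (note: only the complex relations, no rank relations), one forms the complex $\G_0$, computes its homology, chooses generators $y^k_u = (y^k_{u,1},\dots,y^k_{u,b_k})$ of each $H_k(\G_0)$, and then passes to $S_1 = S_0[Z^{k,l}_u]/\fa_1$ where $\fa_1$ is generated by the elements $y^k_{u,j} - \sum_l Z^{k,l}_u x^{k+1}_{l,j}$ for $k<n$ together with the $y^n_{u,j}$ themselves. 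In plain terms: each unwanted cycle is \emph{forced} to be a boundary by adjoining a fresh preimage with indeterminate coordinates, and top-degree homology is simply killed. Iterating gives $S = \varinjlim S_m$. This is a direct attack on acyclicity and sidesteps any grade bookkeeping.

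Your proposed device --- factoring $\Phi_i = A_iB_i$ through generic rank-$r_i$ matrices and imposing $B_iA_{i+1}=0$ --- does not obviously do what you want. The appeal to ``minor ideals of generic matrices carry their maximal grade'' is exactly the step that breaks: once you impose $B_iA_{i+1}=0$ the matrices $A_i,B_i$ are no longer generic, and you are back in the same situation you started from (a generic \emph{complex}, now of a different format). You have not explained why grades strictly increase, nor what the next iterate looks like after one round of factoring, so the tower is not actually defined. The honest admission in your final paragraph that the grade monotonicity and the compatibility with the colimit are ``the hard part'' and ``the subtlest point'' is accurate; in Bruns' construction those difficulties simply do not arise, because one never invokes the acyclicity criterion --- one manufactures acyclicity by hand. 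I would recommend abandoning the grade route and following the homology-killing construction instead.
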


The construction given by Bruns is as follows:

Start with the ring 
\[S_0 = \ZZ[X^k_{i(k),j(k)}]/\fa, \quad k=1, ..., n, \ i(k)=1, ..., b_k, \ j(k) = 1, ..., b_{k-1} \]
with $X_{i,j}^k$ a system of matrices of indeterminates over $\ZZ$, and $\fa$ is the ideal generated by the entries of the matrices given by the products $X^k X^{k-1}$, $k=2, ..., n$. Now, choose $\G_0$ the complex
\[
\G_0: \xymatrix{
S_0^{b_0} & S_0^{b_{1}} \ar[l]_{x^{1}} & \cdots \ar[l]_{x^2} & S_0^{b_{n-1}} \ar[l]_{x^{n-1}} & S_0^{b_n}  \ar[l]_{x^n}& \ar[l] 0
}
\]
Here $x^k$ is the matrix of residues of the entries in $X^k$. Next, we have the homology 
\[ \HH(\G_0): 
\xymatrix{
\HH(S_0^{b_n}) \ar[r] & \HH(S_0^{b_{n-1}}) \ar[r] & \cdots \ar[r] & \HH(S_0^{b_2}) \ar[r] & \HH(S_0^{b_1}) \ar[r] & \HH(S_0^{b_0})
}
 \]
where $\HH(S_0^{b_k}) = \ker(x^{k})/ \im(x^{k+1})$. Let $y_u^k = (y_{u,1}^k, ..., y_{u, b_k}^k)$ be the basis of $\HH(S_0^{b_k})$, where $u=1, ..., u_k$ and $k=1, ..., n$. Now, take 
\[S_1 = S_0[Z_u^{k,l}]/ \mathfrak{a}_1 \]
where $k=1, ..., n-1$, $u=1, ..., u_k$, $j=1, ..., b_k$. 
The polynomials
\[ y_{u,j}^k - \sum_{l=j}^{b_{k+1}}Z_u^{k,l}x_{l,j}^{k+1} \]
and the elements
\[ y_{u,j}^n, \ \text{with} \ u=1, ..., u_n, \ j=1, ..., b_n \]
generate $\fa_1$. Repeating this procedure for $S_1, S_2, ...$. 

%\begin{ex}

%\end{ex}

\begin{remark}
It is important at this point to note what determines exactness of a complex. Let $R$ be a commutative ring, and $M^* = \Hom_R(M,R)$. We will say the \textbf{rank} of a projective module is $r$ if $\bigwedge^{r+1}P = 0$ and $\bigwedge^rP \neq 0$. $P$ has \textbf{well defined rank} if $\rank(P) = \rank(P_x)$ for every prime ideal $(x)$. If $f:P \to Q$ is a morphism of $R$-modules then $\bigwedge^k f : \bigwedge^k P \to \bigwedge^k Q$ induces a map
\[ \left(\bigwedge^k Q\right)^* \otimes \bigwedge^k P \to R\] 
with image $\fa(k)$. If $P$ and $Q$ are free modules of finite rank and we choose a basis then $\fa(k)$ is simply the $k \times k$ minors of the matrix representing $f$ in the chosen basis. Again, we say $\rank(f) = r$ if $\bigwedge^{r+1}f = 0$ but $\bigwedge^r f \neq 0$. This always exists and is finite if $P$ and $Q$ are finitely generated. Now, if $P$ is a free $R$-module and $\rank(P) = r$, we choose a generator of $\bigwedge^r P$, say $\alpha$ and define it to be the \textbf{orientation} of $P$. This determines an isomorphism
\[ \bigwedge^k P \cong \bigwedge^{r-k}P^*.\]
\begin{theorem}
(\cite{BE1}):
\end{theorem}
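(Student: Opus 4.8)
The theorem cited here is the Buchsbaum--Eisenbud acyclicity criterion (together with its refinement, the First Structure Theorem producing the multiplier maps $a_i$): for $R$ Noetherian and
\[
\mathbf F\colon\quad 0\longrightarrow F_n\xrightarrow{\phi_n}F_{n-1}\longrightarrow\cdots\xrightarrow{\phi_1}F_0
\]
a complex of finitely generated free $R$-modules, $\mathbf F$ is acyclic (exact at $F_1,\dots,F_n$) if and only if $\rank\phi_{k+1}+\rank\phi_k=\rank F_k$ for all $k$ and $\operatorname{depth}\fa(\rank\phi_k)\geq k$ for all $k\geq 1$, where $\fa(\rank\phi_k)$ is the ideal of $\rank(\phi_k)\times\rank(\phi_k)$ minors. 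The plan is to reduce to the local case — ranks, minors, depth, and homology all localize correctly — and then prove the two implications; afterwards the existence and essential uniqueness of the multipliers $a_i$, with $\fa(\rank\phi_i)$ recovered up to radical from $a_i$, follows by factoring $\bigwedge^{\rank\phi_i}\phi_i$ through $R$ using the orientations and the identification $\bigwedge^{k}P\cong\bigwedge^{r-k}P^{*}$ recorded in the preceding Remark.

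For the implication ``acyclic $\Rightarrow$ conditions'' I would first localize at each minimal prime $\fp$: over the Artinian ring $R_\fp$ a bounded acyclic complex of free modules splits, forcing the rank equality there, and since over $R$ the cokernels are modules of well-defined rank the equality holds globally. For the depth bounds I argue by contradiction: if $\operatorname{depth}\fa(\rank\phi_k)<k$, pick a prime $\fp\supseteq\fa(\rank\phi_k)$ with $\operatorname{depth}R_\fp<k$; then $\mathbf F_\fp$ is a finite free resolution of $\operatorname{coker}(\phi_1)_\fp$, and since $\fa(\rank\phi_k)\subseteq\fp$ the resolution cannot be simplified at spot $k$ (a map of free modules over a local ring splits off a free summand only when some minor of the relevant size is a unit), so its length is $\geq k>\operatorname{depth}R_\fp$, contradicting the Auslander--Buchsbaum formula.

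The implication ``conditions $\Rightarrow$ acyclic'' is the substantive half. It suffices to show $H_i(\mathbf F)_\fp=0$ for every prime $\fp$, and I induct on $d=\operatorname{depth}R_\fp$; after localizing, assume $(R,\fm)$ is local of depth $d$. If $d=0$ then $\fm\in\operatorname{Ass}(R)$, so every ideal of positive grade is the unit ideal; by the depth hypothesis $\fa(\rank\phi_k)=R$ for all $k\geq 1$, hence each $\phi_k$ has a unit minor of size $\rank\phi_k$, and using the rank conditions $\mathbf F$ becomes a direct sum of split two-term complexes plus the tail $F_0$, so it is acyclic. If $d\geq 1$, use prime avoidance to pick $x\in\fm$ that is a nonzerodivisor on $R$ and avoids the finitely many primes witnessing the grades of the ideals $\fa(\rank\phi_k)$; then $\overline R=R/xR$ has depth $d-1$, the reduced complex $\overline{\mathbf F}=\mathbf F\otimes\overline R$ still satisfies the rank conditions and suitably shifted depth conditions, and the inductive hypothesis — conveniently packaged as the Peskine--Szpiro acyclicity lemma applied to the syzygy modules, whose depths are controlled by the $\fa(\rank\phi_k)$ — gives that $\overline{\mathbf F}$ is acyclic. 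Finally I lift this across the short exact sequence of complexes $0\to\mathbf F\xrightarrow{\;x\;}\mathbf F\to\overline{\mathbf F}\to 0$: its long exact homology sequence yields $xH_i(\mathbf F)=H_i(\mathbf F)$ for $i\geq 1$, and Nakayama (the $H_i(\mathbf F)$ are finitely generated) forces $H_i(\mathbf F)=0$.

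I expect the main obstacle to be the inductive step of this second implication: choosing $x$ so that passage from $R$ to $R/xR$ preserves the rank conditions and lowers each relevant grade by exactly one rather than by more, and then verifying that the reduced complex genuinely satisfies the hypotheses needed to invoke the acyclicity lemma. The bookkeeping for $\operatorname{depth}\fa(\rank\phi_k)$ under reduction modulo $x$, combined with the requirement that $x$ simultaneously dodge the associated primes of $R$ and the grade-witnessing primes of all the minor ideals, is the one place where genuine care is required; the remaining ingredients — localization, McCoy's lemma on unit minors, Auslander--Buchsbaum, and Nakayama — enter only formally.
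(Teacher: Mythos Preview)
Your proof outline is correct and follows the standard route to the Buchsbaum--Eisenbud acyclicity criterion: localize, handle the depth-zero base case by splitting via unit minors, and induct on depth by reducing modulo a regular element chosen via prime avoidance, finishing with Nakayama. This is essentially the argument in the original paper \cite{BE1}.

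There is nothing to compare against, however: in the present paper the theorem environment is empty apart from the citation, and no proof is given or even sketched. The surrounding Remark sets up the notation (rank, the ideals $\fa(k)$, orientations, the isomorphism $\bigwedge^{k}P\cong\bigwedge^{r-k}P^{*}$) and then simply invokes \cite{BE1}. So your proposal supplies what the paper omits; there is no alternative argument in the paper to contrast with yours.
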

\end{remark}

Now, let
\[ S_1 = S_0[Z_u^{k,j}]/\fa_1 \]
where $k=1, ..., n-1$, $u=1, ..., u_k$, and $j=1, ..., b_{k+1}$, and where $Z_u^{k,j}$ are indeterminates over $S_0$. The ideal $\fa_1$ is generated by
\[ y_{u, p}^k - \sum_{j=p}^{b_{k+1}} Z_u^{k,j}x_{j,p}^{k+1}\] 
where $k=1, ..., n-1$, $u=1, ..., u_k$, $p=1, ..., b_k$. Continuing in this way to produce $S_2, S_3, ...., S_m$ we reach a point where $\G_m$ is an acyclic complex (i.e. exact except possibly at $S_m^{b_0}$). This gives the so-called \textbf{generic free resolution}. We will denote the generic free resolution with Betti numbers $(b_n, ..., b_0)$ as above by $R_{com}$ as it is the parametrizing ring of all such free resolutions. 

\section{Buchsbaum-Eisenbud Multipliers}\label{BE multipliers}

\subsection{Examples}\label{Exactify1}
\begin{ex}
The Macaulay2 code for the following example can be found in the Appendix \ref{Appendix} in Subsection \ref{Exactify1 Code}. 
Take $R = \ZZ[x_1, ..., x_6]$, and let 
\[M = \begin{pmatrix}{x}_{1}&
      {x}_{3}&
      {x}_{5}&
      {x}_{7}\\
      {x}_{2}&
      {x}_{4}&
      {x}_{6}&
      {x}_{8}\\
      \end{pmatrix}\]
be a generic matrix over $R$. Next let
\[ I = (-{x}_{2} {x}_{3}+{x}_{1} {x}_{4},-{x}_{2} {x}_{5}+{x}_{1} {x}_{6},-{x}_{4} {x}_{5}+{x}_{3} {x}_{6},-{x}_{2} {x}_{7}+{x}_{1}
      {x}_{8},-{x}_{4} {x}_{7}+{x}_{3} {x}_{8},-{x}_{6} {x}_{7}+{x}_{5} {x}_{8})\]
      be the ideal in $R$ generated by the $2 \times 2$-minors of $M$. Then $I$ has free resolution of the format,
     \[R \leftarrow R^{6} \leftarrow R^{8} \leftarrow R^{4} \leftarrow R \leftarrow 0.\]
     Next, we take the ring $\ZZ[Y_{i(k), j(k)}^k]_{k=1,...,4}$, where $Y^k$ is a matrix of $b_k \times b_{k-1}$ indeterminates over $\ZZ$ corresponding to the format $(b_0, b_1, b_2, b_3,b_4) = (1,6,8,4,1)$. Computing $Y^kY^{k-1}$ we get, 

\[ (Y^1Y^2)^t=\begin{pmatrix} 
	  {y}_{1} {y}_{7}+{y}_{2} {y}_{8}+{y}_{3} {y}_{9}+{y}_{4} {y}_{10}+{y}_{5} {y}_{11}+{y}_{6} {y}_{12} \\
      {y}_{1} {y}_{13}+{y}_{2} {y}_{14}+{y}_{3} {y}_{15}+{y}_{4} {y}_{16}+{y}_{5} {y}_{17}+{y}_{6} {y}_{18} \\
      {y}_{1} {y}_{19}+{y}_{2} {y}_{20}+{y}_{3} {y}_{21}+{y}_{4} {y}_{22}+{y}_{5} {y}_{23}+{y}_{6} {y}_{24} \\
      {y}_{1} {y}_{25}+{y}_{2} {y}_{26}+{y}_{3} {y}_{27}+{y}_{4} {y}_{28}+{y}_{5} {y}_{29}+{y}_{6} {y}_{30} \\
      {y}_{1} {y}_{31}+{y}_{2} {y}_{32}+{y}_{3} {y}_{33}+{y}_{4} {y}_{34}+{y}_{5} {y}_{35}+{y}_{6} {y}_{36} \\
      {y}_{1} {y}_{37}+{y}_{2} {y}_{38}+{y}_{3} {y}_{39}+{y}_{4} {y}_{40}+{y}_{5} {y}_{41}+{y}_{6} {y}_{42} \\
      {y}_{1} {y}_{43}+{y}_{2} {y}_{44}+{y}_{3} {y}_{45}+{y}_{4} {y}_{46}+{y}_{5} {y}_{47}+{y}_{6} {y}_{48} \\
      {y}_{1} {y}_{49}+{y}_{2} {y}_{50}+{y}_{3} {y}_{51}+{y}_{4} {y}_{52}+{y}_{5} {y}_{53}+{y}_{6} {y}_{54}
      \end{pmatrix}\]

$Y^2Y^3$ is too large to print on the screen and is shown in the Appendix \ref{Appendix} in Subsection \ref{Exactify1 Code}.

\[ Y^3Y^4=\begin{pmatrix}{y}_{55} {y}_{87}+{y}_{63} {y}_{88}+{y}_{71} {y}_{89}+{y}_{79} {y}_{90}\\
      {y}_{56} {y}_{87}+{y}_{64} {y}_{88}+{y}_{72} {y}_{89}+{y}_{80} {y}_{90}\\
      {y}_{57} {y}_{87}+{y}_{65} {y}_{88}+{y}_{73} {y}_{89}+{y}_{81} {y}_{90}\\
      {y}_{58} {y}_{87}+{y}_{66} {y}_{88}+{y}_{74} {y}_{89}+{y}_{82} {y}_{90}\\
      {y}_{59} {y}_{87}+{y}_{67} {y}_{88}+{y}_{75} {y}_{89}+{y}_{83} {y}_{90}\\
      {y}_{60} {y}_{87}+{y}_{68} {y}_{88}+{y}_{76} {y}_{89}+{y}_{84} {y}_{90}\\
      {y}_{61} {y}_{87}+{y}_{69} {y}_{88}+{y}_{77} {y}_{89}+{y}_{85} {y}_{90}\\
      {y}_{62} {y}_{87}+{y}_{70} {y}_{88}+{y}_{78} {y}_{89}+{y}_{86} {y}_{90}\\
      \end{pmatrix}\]

\end{ex}

\begin{ex}\label{BE multipliers 1}
Let $R = k[t_1, ..., t_n]$ be a polynomial ring, and let 

\[ A=\begin{pmatrix}{x}_{1}&
      {x}_{4}&
      {x}_{7}&
      {x}_{10}&
      {x}_{13}\\
      {x}_{2}&
      {x}_{5}&
      {x}_{8}&
      {x}_{11}&
      {x}_{14}\\
      {x}_{3}&
      {x}_{6}&
      {x}_{9}&
      {x}_{12}&
      {x}_{15}\\
      \end{pmatrix},
\quad  B= \begin{pmatrix} {y}_{1} &
      {y}_{6}\\
      {y}_{2}&
      {y}_{7}\\
      {y}_{3}&
      {y}_{8}\\
      {y}_{4}&
      {y}_{9}\\
      {y}_{5}&
      {y}_{10}\\
      \end{pmatrix}      \]

%\begin{pmatrix}
%x_{1,1} & x_{1,2} & x_{1,3} & x_{1,4} & x_{1,5} \\
%x_{2,1} & x_{2,2} & x_{2,3} & x_{2,4} & x_{2,5}  \\
%x_{3,1} & x_{3,2} & x_{3,3} & x_{3,4} & x_{3,5} \\
%\end{pmatrix} 

be a $3 \times 5$ generic matrix representing the map $d_1: R^5 \to R^3$ and $R^2 \to R^5$ in the standard basis $\{e_1, e_2, e_3\}$ and $\{f_1, ..., f_5\}$ and $\{g_1, g_2\}$.

\[ \left(\bigwedge^3(A)\right)^t = \begin{pmatrix} -{x}_{3} {x}_{5} {x}_{7}+{x}_{2} {x}_{6} {x}_{7}+{x}_{3} {x}_{4} {x}_{8}-{x}_{1} {x}_{6} {x}_{8}-{x}_{2} {x}_{4} {x}_{9}+{x}_{1} {x}_{5} {x}_{9} \\
      -{x}_{3} {x}_{5} {x}_{10}+{x}_{2} {x}_{6} {x}_{10}+{x}_{3} {x}_{4} {x}_{11}-{x}_{1} {x}_{6} {x}_{11}-{x}_{2} {x}_{4} {x}_{12}+{x}_{1} {x}_{5} {x}_{12} \\
      -{x}_{3} {x}_{8} {x}_{10}+{x}_{2} {x}_{9} {x}_{10}+{x}_{3} {x}_{7} {x}_{11}-{x}_{1} {x}_{9} {x}_{11}-{x}_{2} {x}_{7} {x}_{12}+{x}_{1} {x}_{8} {x}_{12} \\
      -{x}_{6} {x}_{8} {x}_{10}+{x}_{5} {x}_{9} {x}_{10}+{x}_{6} {x}_{7} {x}_{11}-{x}_{4} {x}_{9} {x}_{11}-{x}_{5} {x}_{7} {x}_{12}+{x}_{4} {x}_{8} {x}_{12} \\
      -{x}_{3} {x}_{5} {x}_{13}+{x}_{2} {x}_{6} {x}_{13}+{x}_{3} {x}_{4} {x}_{14}-{x}_{1} {x}_{6} {x}_{14}-{x}_{2} {x}_{4} {x}_{15}+{x}_{1} {x}_{5} {x}_{15} \\
      -{x}_{3} {x}_{8} {x}_{13}+{x}_{2} {x}_{9} {x}_{13}+{x}_{3} {x}_{7} {x}_{14}-{x}_{1} {x}_{9} {x}_{14}-{x}_{2} {x}_{7} {x}_{15}+{x}_{1} {x}_{8} {x}_{15} \\
      -{x}_{6} {x}_{8} {x}_{13}+{x}_{5} {x}_{9} {x}_{13}+{x}_{6} {x}_{7} {x}_{14}-{x}_{4} {x}_{9} {x}_{14}-{x}_{5} {x}_{7} {x}_{15}+{x}_{4} {x}_{8} {x}_{15} \\
      -{x}_{3} {x}_{11} {x}_{13}+{x}_{2} {x}_{12} {x}_{13}+{x}_{3} {x}_{10} {x}_{14}-{x}_{1} {x}_{12} {x}_{14}-{x}_{2} {x}_{10} {x}_{15}+{x}_{1} {x}_{11} {x}_{15} \\
      -{x}_{6} {x}_{11} {x}_{13}+{x}_{5} {x}_{12} {x}_{13}+{x}_{6} {x}_{10} {x}_{14}-{x}_{4} {x}_{12} {x}_{14}-{x}_{5} {x}_{10} {x}_{15}+{x}_{4} {x}_{11} {x}_{15} \\
      -{x}_{9} {x}_{11} {x}_{13}+{x}_{8} {x}_{12} {x}_{13}+{x}_{9} {x}_{10} {x}_{14}-{x}_{7} {x}_{12} {x}_{14}-{x}_{8} {x}_{10} {x}_{15}+{x}_{7} {x}_{11} {x}_{15}
      \end{pmatrix} =  
\begin{pmatrix} - {y}_{2} {y}_{6}+{y}_{1} {y}_{7}\\
      -{y}_{3} {y}_{6}+{y}_{1} {y}_{8}\\
      -{y}_{3} {y}_{7}+{y}_{2} {y}_{8}\\
      -{y}_{4} {y}_{6}+{y}_{1} {y}_{9}\\
      -{y}_{4} {y}_{7}+{y}_{2} {y}_{9}\\
      -{y}_{4} {y}_{8}+{y}_{3} {y}_{9}\\
      -{y}_{5} {y}_{6}+{y}_{1} {y}_{10}\\
      -{y}_{5} {y}_{7}+{y}_{2} {y}_{10}\\
      -{y}_{5} {y}_{8}+{y}_{3} {y}_{10}\\
      -{y}_{5} {y}_{9}+{y}_{4} {y}_{10}\\
      \end{pmatrix} = \bigwedge^{2}(B)
      \]
\end{ex}

%------------------------------------------------------------------------------------------
%
%------------------------------------------------------------------------------------------
\section{Varieties of Complexes of Persistence Modules}\label{varieties of complexes}

\subsection{Algebraic Geometry Prerequisites}
Before we move on to the more geometric approach to the study of multiparameter persistence modules, let us review a few notions from algebraic geometry and geometric invariant theory. Much of what follows is simply formal theory in order to ensure precision and justify a more intuitive approach in later sections. For the reader not concerned with such formal justifications, this section can be skipped on a first reading. Our main reference will be \cite{M} and \cite{H}, which we will follow closely. While the theory involved in the setup can feel quite daunting to a reader not familiar with algebraic geometry, it is not crucial to understanding and using later sections. In fact, once the formalities are taken care of, little more than a good understanding of the multilinear algebra which has already appeared in previous sections will be required. 

\begin{defn} (Algebraic Varieties): 
The most elementary and classical perspective on an algebraic variety is as the zero set of a collection of polynomial equations, say
\begin{align*}
p_1(x_1, ..., x_n) &=0 \\
p_2(x_1, ..., x_n) &=0 \\
\vdots \quad \quad \quad &  \quad \vdots \\
p_m(x_1, ..., x_n) &=0
\end{align*}
These equations will of course generate an ideal in $R = \KK[x_1, ..., x_n]$. However, we will need something more general for what follows since we will not be looking for solutions to the equations of polynomials $p \in \KK[x_1, ..., x_n]$ given by points in affine space $\mathbb{A}^n_{\KK}$. Instead, we will be looking for solutions in the more general "affine space" $\A^k_{R}$. To be precise, let $R$ be the $\KK$-algebra $\KK[x_1, ..., x_n]$. Define the set
\[ \cV_I(R) = \cV(R) = \{ y = (y_1, ..., y_k) \in R^k: \ p(y) = 0 \ \forall \ p \in I\},\]
where $p$ is a polynomial over $R$ in the ideal $I \subset R$, generated by polynomial equations:
\begin{align*}
p_1(y_1, ..., y_k) &=0 \\
p_2(y_1, ..., y_k) &=0 \\
\vdots \quad \quad \quad &  \quad \vdots \\
p_m(y_1, ..., y_k) &=0.
\end{align*}
Here, as usual, $R^k = R^{\oplus k} = R \oplus R \oplus \cdots \oplus R$ ($k$-summands). The set $\cV_R(I)$ is called the \textbf{$R$-valued points} of the \textbf{variety} $\cV_I = \cV$. We will generally omit the ideal $I$ if it is clear what $I$ is. 

Next, let $f:R \to S$ be a ring homomorphism to some commutative ring $S$ (generally this will just be another copy of $R$ in the following sections). If $y = (y_1, ..., y_k)$ is an $R$-valued point of $\cV$, then $f(y) = (f(y_1), ..., f(y_k))$ is an $S$-valued point (living in $\cV(S)$). So, $\cV(f): \cV(R) \to \cV(S)$. Also, given two ring homomorphisms, $f: R \to S$ and $g: S \to T$, we get $\cV(g \circ f) = \cV(g) \circ \cV(f)$. In particular, $\cV$ is a covariant functor from the categeory of $\KK$-algebras to the category of sets. The functor is entirely independent of coordinates. So, given an arbitrary $\KK$-algebra $\fR$, if $\cV(\KK) = \Spec(R/I)$ and $(a_1, ..., a_k) \in \fR$ is an $\fR$-valued point of $\cV$, then there is a homomorphism of algebras,
\[ f: R \to \fR \]
given by $x_i \mapsto a_i$ with $\ker(f) = I$, and so there is a homomorphism $R/I \to \fR$. So we identify
\[ \cV(\fR) = \Hom_{\KK}(R/I, \fR) = \Hom_{\KK}(\Spec(\fR), \cV(\KK)).\]
Again, if $f:\fR \to \mathfrak{S}$ is any ring homomorphism, then it induces a morphism of affine schemes
\[ f^{\#}:\Spec(\mathfrak{S}) \to \Spec(\fR)\]
in the usual way. Furthermore, it gives a set map
\[ \cV(f): \Hom_{\KK}(\Spec(\fR), \cV(\KK)) \to \Hom_{\KK}(\Spec(\mathfrak{S}), \cV(\KK)). \]
\[
\xymatrix{
\Spec(\mathfrak{S}) \ar[rr]^{f^{\#}} \ar[dr] & & \Spec(\fR) \ar[dl]_g \\
& \cV(\KK) &
}\]
\end{defn}

\begin{defn} (Algebraic Groups): 
Next, we will need the technology of \emph{algebraic groups}, which are just a generalization of linear groups (i.e. subgroups of invertible square matrices) to arbitrary commutative rings. In the classical case, one defines $\GL_n(\KK)$ to be the group of $n \times n$ matrices over the field $\KK$ which are invertible, i.e. such that $\det(g) \neq 0$ for all $g \in \GL_n(\KK)$. This group and the subgroup $\SL_n(\KK)$ of matrices with $\det(g) = 1$ will be important for what follows, but we will need more general versions. While for $\KK = \CC$, these groups carry the structure of a smooth manifold, if we replace $\KK$ with a commutative ring, for example $R = \KK[x_1, ..., x_n]$, they carry the structure of a scheme. To be precise, let 
\[ \SL_n(R) = \{ g = (g_{i,j}) \in R^{n^2}: \ \det(g) = \det(g_{i,j}) = 1\} .\]
This can be realized as scheme by identifying it with the subset $\{g=(g_{i,j}) \in R^{n^2}: \det(g)-1=0\}$. Intuitively, it is just the invertible matrices of determinant $1$, with entries in the ring $R$. We can treat $\SL_n$ as a functor in the same way we treated the varieties $\cV$ as functors, from the category of $\KK$-algebras to the category of sets. So we define an \textbf{algebraic group} to be an algebraic variety $G$, which is also a covariant functor from the category of $\KK$-algebras to the category of groups. 
\end{defn}

\begin{ex}
Let $A = \KK[x_{i,j}, \det(x)^{-1}]$ be the polynomial ring in $n^2$ variables with the inverse of the determinant $\det(x)^{-1} = \det(x_{i,j})^{-1}$ adjoined. We may think of this as localizing at $\det(x) \neq 0$. $\Spec(A) \subset \A^{n^2}_{\KK}$ is open in the Zariski topology, and gives the algebraic group $\GL_n(\KK)$. 
\end{ex}

\subsection{Varieties of Complexes of Persistence Modules}
Let $\bk$ be the ring $\ZZ$ or a field and let $R = \bk[x_1, ..., x_n]$ be the polynomial ring with coefficients in $\bk$. Let 
\[
\xymatrix{
R^{b_0} & \ar[l]_{d_1} R^{b_1} & \ar[l]_{d_2} \cdots  & \ar[l]_{d_{n-1}} R^{b_{n-1}} & \ar[l]_{d_n} R^{b_n}
}
\]
be a sequence of finite free $R$-modules $F_k = R^{b_k}$, and where the Betti numbers $(b_0, b_1, ..., b_n)$ denote $\rank(F_k) = b_k$. Denote the $R$ module given by all $R$-homomorphisms $f: F_k \to F_{k-1}$, for $k=1, ..., n$ by, 
\[ \Hom_R (F_k, F_{k-1}) \cong F_{k}^* \otimes F_{k-1}. \]
Define the $R$-module
\[ A^N = \bigoplus_{k=1}^{n} \Hom_R (F_k, F_{k-1}) \cong \bigoplus_{k=1}^n F_k^* \otimes F_{k-1},\]
where $N = \sum_{k=1}^n b_{k-1}b_k$. This can be thought of as an affine space similar to the case when $R$ is replaced by a field $\bk$, and the free 
modules $F_k = R^{b_k}$ are replaced by $\bk$-vector spaces $V_k = \bk^{b_k}$. Then $\Hom_{\bk}(V_k, V_{k-1})$ is 
isomorphic to the affine space $\Mat_{b_{k-1} \times b_k}(\bk)$, of $b_{k-1} \times b_k$ matrices over $\bk$, and $A^N 
\cong \bigoplus_{k=1}^n \bk^{b_{k-1} \times b_k}$ as a $\bk$-vector space. If we take an $n$-tuple of maps $(f_1, f_2, ..., f_n) \in A^N$, this defines a point in $A^N$. Let us suppose we take all such $n$-tuples of maps in $A^N$ such that $f_k \circ f_{k-1} = 0$. This defines an affine scheme as follows. Since $\Spec(\bk[x_1, ..., x_n]) = \bk^n$, every ring homomorphism $f: R \to R$ corresponds to a morphism of the affine space $\phi: \bk^n \to \bk^n$, and every morphism $f_k: R^{b_k} \to R^{b_{k-1}}$ corresponds to a morphism $\phi_k: \prod_{j(k-1)=1}^{b_{k-1}} \bk^n \to \prod_{j(k)=1}^{b_k} \bk^n$. 

We may view the $R$-linear maps $f_k: R^{b_k} \to R^{b_{k-1}}$ as matrices of indeterminates over $R$ once a basis $\{e(k)_1, ..., e(k)_{b_k}\}$ of each free module $F_k = R^{b_k}$ is chosen and fixed. We then identify the spaces $\Hom_R(F_i, F_{i-1})$ with matrices 
\[ X(i)_{j(i), k(i)} = \begin{pmatrix}
x(k)_{1,1} & x(k)_{1,2} & \cdots & x(k)_{1,b_{k-1}} \\
x(k)_{2,1} & x(k)_{2,2} & \cdots & x(k)_{2,b_{k-1}} \\
\vdots & \vdots & \ddots & \vdots \\
x(k)_{b_k,1} & x(k)_{1,2} & \cdots & x(k)_{b_k,b_{k-1}} 
\end{pmatrix} \cong F_k^* \otimes F_{k-1}
\]
where in the chosen basis we have the bijection sending $e(k)_{i(k)}^* \otimes e(k-1)_{j(k)}$ to the matrix with a $1$ in the $(i(k),j(k))$ entry, and zeros elsewhere, $k=1, ..., n$.

Then, taking the collection of all $n$-tuples of matrices $(X_1, X_2, ..., X_n)$ such that $X_kX_{k-1}=0$, we get a collections of polynomial equations over $R$ whose zero set defines an affine scheme. We will call this a \textbf{variety of complexes (of persistence modules)}. For the Betti numbers $b = (b_0, ..., b_n)$ we denote this variety by $\cV(b)$. Define $\rank(f_k)$ of a map (respectively $\rank(X_k)$ of a matrix) to be the largest positive integer $r_k$ such that $\bigwedge^{r_k} f_k \neq 0$ (resp. $\bigwedge^{r_k} X_k \neq 0$), but $\bigwedge^{r_k+1}f_k = 0$ (resp. $\bigwedge^{r_k+1}X_k = 0$). It is not difficult to see, that if $(X_1, ..., X_n)$, corresponding to $(f_1, ..., f_n)$, is a point in $\cV(b)$, then we must have $\rank(f_k)+\rank(f_{k-1}) \leq b_k$. This yields another set of polynomial equations given by
\[ \bigwedge^{r_k+1} X_k = 0\]
for each choice of $n$-tuple of ranks $r = (r_1, ..., r_n)$. We define this closed subset by $\cV(b,r)$. Clearly, depending on the choice of $b = (b_0, b_1, ..., b_n)$ we may have several choices of \textbf{maximal rank sequences}, i.e. $n$-tuples of ranks $r=(r_1, ..., r_n)$ such that $r_k+r_{k-1} \leq b_k$, but such that no $r_k$ can be increased such that the sequence still satisfies these conditions.

\subsection{Rank Conditions and Orbits}

Given $X = (X_1, ..., X_n) \in A^N$, the set $\cV(b,r) = \{X \in A^N: \rank(X_i) \leq r_i\}$ corresponding to the rank conditions $r = (r_1, ..., r_n)$, is closed in the Zariski topology. Moreover, it is clear that if $r' = (r_1', ..., r_n')$ is such that $r_i' \leq r_i$ for $i=1, ..., n$, then the set $\cV(b,r')$ is a closed subset of $\cV(b,r)$. We may define a partial order on rank sequences by defining $r'=(r_1', ..., r_n') \leq r=(r_1, ..., r_n)$ if and only if $r_i' \leq r_i$ for all $i=1, ..., n$. Further, if $r$ and $r'$ are incomparable in this partial order, then $\cV(b,r)$ and $\cV(b.r')$ have intersection $\cV(b,r'')$ where $r''$ is the rank sequence satisfying $r_i'' \leq r_i, r_i'$. We may define a partial order on the $\cV(b,r)$ in this way given by containment. 

We will very suggestively call $\cV(b,r)$ an \textbf{orbit closure} in $A^N$, and we will call the set
\[ \cO(b,r) = \{X \in A^N: \rank(X(i)) = r_i\} \]
an \textbf{orbit} in $A^N$. In the Zariski topology, $\cO(b,r)$ is open in $\cV(b,r)$, and the closure is $\overline{\cO(b,r)} = \cV(b,r)$. We will say that $\cV(b,r)$ \textbf{degenerates} to $\cV(b,r')$ if $\cV(b,r') \subset \cV(b,r)$, and we say an orbit $\cO(b,r)$ \textbf{degenerates} to the orbit $\cO(b,r')$ if $\cO(b,r')$ is in the closure $\cO(b,r) = \cV(b,r)$. This yields a partial order on orbits which we call the \textbf{degeneration order}.

\subsection{Coordinate Rings}

Next, we would like to study the coordinate rings $\scO(\cV(b))$ and $\scO(\cV(b,r))$. In particular, we want to show that

\begin{prop}
Given Betti numbers $b=(b_0, b_1, ..., b_n)$ and rank conditions $(r_1, ..., r_n)$ we have isomorphisms, 
\begin{enumerate}
\item $\scO(\cV(b)) = k[X(i)_{j(i), k(i)}]/I(b)$, where $I = (X(i)X(i-1))_{i=1,...,n}$, and
\item $\scO(\cV(b,r)) = \scO(\cV(b))/I(r)$, where $I(r)$ is generated by the polyonomial equations given by $\bigwedge^{r_i+1}X_i$. 
\end{enumerate}
\end{prop}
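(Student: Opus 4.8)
The plan is to derive both isomorphisms straight from the functor-of-points definitions of $\cV(b)$ and $\cV(b,r)$ given above, using the Yoneda lemma; the mathematical content then reduces to the elementary observation that "being a complex" and "having bounded rank" are conditions cut out in the affine space $A^N$ by explicit polynomials in the matrix entries.

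First I would fix notation: let $A := \bk[X(i)_{j(i),k(i)}]$ be the polynomial ring over $\bk$ in the $N = \sum_{k=1}^{n} b_{k-1}b_k$ indeterminates, and for each $\bk$-algebra $S$ identify $\Hom_{\bk}(A,S)$ with the set of $n$-tuples $(X_1,\dots,X_n)$, $X_i \in \Mat_{b_{i-1}\times b_i}(S)$, via evaluation of the indeterminates. The crucial point is purely formal: the $(p,q)$ entry of the product $X_iX_{i-1}$ is the fixed element $\sum_{\ell} X(i)_{p,\ell}\,X(i-1)_{\ell,q}$ of $A$, so a homomorphism $A \to S$ annihilates the entries of all the products $X_iX_{i-1}$ if and only if the corresponding tuple of matrices forms a complex of free $S$-modules of format $b$. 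Thus $\Hom_{\bk}(A/I(b),S)$ is in natural bijection with the $S$-points of $\cV(b)$; naturality in $S$ together with Yoneda yields $\cV(b) \cong \Spec(A/I(b))$, that is, $\scO(\cV(b)) = A/I(b)$, which is part (1). Part (2) goes the same way once one recalls, from the treatment of the ideals $I_j(\varphi)$ in Section~\ref{presentation spaces}, that the entries of $\bigwedge^{r_i+1}X_i$ are precisely the $(r_i+1)\times(r_i+1)$ minors of $X_i$, hence generate $I_{r_i+1}(X_i)$: a point of $\cV(b)(S)$ satisfies $\rank(X_i) \le r_i$ for all $i$ exactly when the associated homomorphism kills all these minors, so $\cV(b,r)$ is the closed subscheme of $\cV(b)$ defined by the ideal $I(r)$ generated by the images of the $(r_i+1)$-minors, and a closed subscheme of $\Spec(B)$ cut out by an ideal $J$ has coordinate ring $B/J$.

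The only step that is more than bookkeeping is making sure everything is read scheme-theoretically: neither $I(b)$ nor the minor ideals $I_{r_i+1}(X_i)$ need be radical, so $\scO(\cV(b))$ and $\scO(\cV(b,r))$ must be the rings carried by the functorially defined schemes---equivalently, the rings presented by the literal defining equations---rather than the coordinate rings one would attach to the underlying classical varieties over $\overline{\bk}$. With that convention fixed, the proof is essentially complete; a secondary detail worth spelling out is the precise generating set for $I(r)$, namely the sum over $i$ of the $(r_i+1)$-minor ideals of the $i$-th generic matrix, matching the conventions of Section~\ref{presentation spaces}, so that the formula $\scO(\cV(b,r)) = \scO(\cV(b))/I(r)$ is unambiguous. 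I do not anticipate any genuine obstacle beyond this.
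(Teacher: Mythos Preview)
Your argument is correct under the scheme-theoretic reading you explicitly adopt, but it is \emph{not} the route the paper takes, and the difference is substantive rather than cosmetic. Immediately after the proposition the paper writes: ``To prove this, we provide an equivariant filtration of the coordinate rings via Schur functors. In particular, we show there is a basis of the rings given by \emph{standard multitableaux}. We then show this basis provides a standard monomial theory and yields a Gr\"obner basis of the coordinate rings.'' In other words, the paper proves the proposition by building the standard monomial machinery of Section~\ref{Bases and Standard Monomials} (following De Concini--Strickland and Pragacz--Weyman) and exhibiting an explicit $\bk$-basis of $A/I(b)$ and of $A/(I(b)+I(r))$ consisting of standard multitableaux. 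The point of that approach is that it simultaneously identifies the quotient with the coordinate ring of the \emph{reduced} variety: the paper defines $\cV(b)$ and $\cV(b,r)$ as closed \emph{subsets} of $A^N$ (``we define this closed subset by $\cV(b,r)$''), so the nontrivial content is that the naive ideals $I(b)$ and $I(b)+I(r)$ are already the full vanishing ideals. Your Yoneda argument does not touch this --- you yourself flag that ``neither $I(b)$ nor the minor ideals \dots\ need be radical'' and then resolve the issue by \emph{redefining} $\scO(\cV(b))$ to mean the structure ring of the scheme cut out by the literal equations. That makes the proposition a tautology, but it sidesteps exactly the theorem the paper is after.

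So: what you wrote is a valid proof of a weaker (scheme-theoretic) statement; the paper's standard-monomial proof buys the stronger, classical result that these ideals are radical (indeed prime) and hence really are the coordinate rings of the varieties of complexes. If you want your write-up to match the paper's intent, you would need either to invoke the De Concini--Strickland / Pragacz--Weyman basis theorem directly, or to reproduce the straightening-law argument of Section~\ref{Bases and Standard Monomials}.
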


To prove this, we provide an equivariant filtration of the coordinate rings via Schur functors. In particular, we show there is a basis of the rings given by \emph{standard multitableaux}. We then show this basis provides a standard monomial theory and yields a Gr\"{o}bner basis of the coordinate rings.

%------------------------------------------------------------------------------------------
%
%------------------------------------------------------------------------------------------
\section{Bases of Coordinate Rings via Young Tableaux}\label{Bases and Standard Monomials}

\subsection{Standard Monomials and Gr\"{o}bner Bases}
We now define a Gr\"{o}bner basis for the coordinate rings of the Buchsbaum-Eisenbud varieties of complexes. Much of modern commutative algebra and algebraic geometry is formulated in a nonconstructive manner. Finding practical algorithms allows one to apply computer algebra packages such as Macaulay2 to various problems. One extremely useful and central tool is that of Gr\"{o}bner bases. Gr\"{o}bner bases allow many questions about ideals in polynomial rings to be reduced to monomial ideals, which are much easier to handle. 

\begin{defn}
	Let us write monomials in the ring $R = \KK[x_1, ..., x_n]$ as a vector $a = (a_1, ..., a_n)$, which will represent the monomial $x^{a_1} \cdots x^{a_n}$. Any ideal generated by such monomials will be called a \textbf{monomial ideal}. If $J \subset R$ is a monomial ideal, then the set of all monomials not in $J$ forms a vector space basis of $R/J$. For arbitrary ideals $I \subset R$ one would like to find a similar description of the basis of $R/I$. Any $R/I$ will have a monomial basis, and if we choose this basis $B$ to be the complement of some monomial ideal $J$, we are able to easily determine when a monomial is in $B$, since $J$ is generated by finitely many monomials by simply determining if it is divisible by a generator of $J$. We define a \textbf{monomial order} on an ideal $I \subset R$ to be a total order $>$ on the monomials of $I$ such that if $m_1, m_2$ are two monomials of $I$ and $n \neq 1$ is a monomial of $R$, then
	\[ m_1 > m_2 \implies nm_1 > nm_2.\]
	Any such monomial order is \textbf{artinian}, i.e. every subset has a least element. 
\end{defn}

\begin{defn}
	If $>$ is a monomial order on $I \subset R$, then for any $f \in I$ we define the \textbf{initial term of} $f$, denoted by $\init(f)$, to be the greatest monomial term of $f$. We define the \textbf{reverse lexicographic order} by $a = (a_1, ..., a_m) > b = (b_1, ..., b_n)$ if and only if the degree of $a$ is larger than the degree of $b$, or the degrees are equal and $a_i<b_i$ for the last index $i$ with $a_i \neq b_i$. In other words, we compare total degree first, then in reverse order compare $a_n, b_n$, then $a_{n-1}, b_{n-1}$, and so on. When we reach an $a_i \neq b_i$ in the reverse order, then $a_i < b_i$ implies $a>b$. 
\end{defn}

Let $d_1, d_2$ be two positive integers and let 
\[ V(d_1, d_2, r) = \{(A,B): A\subset [1,d_1], \ B \subset [1,d_2] ; 1 \leq |A|=|B| \leq r\}.\]
For $k=1, ..., N$, let $V_k = \{ \la A|B \ra : (A,B)_k \in V(d_{k-1}, d_k, r_k)\}$ define a set of indeterminates. 
This notation $\la A,B \ra$ is meant to represent the $r \times r$-minor of a matrix given by the rows $A$, and columns $B$ ($|A| = |B| = r$). Now, define a total order on $V_k$ for all $k \in [1,r]$ by 
\[ \la A|B \ra < \la C|D \ra \iff A<C,\ \text{or} \ A=C \ \text{and}\ B<D.\]
Now, define $V(\bd) = V_1 \amalg \cdots V_N$ to be the disjoint union, and let $R = k[x:x \in V(\bd)]$. Assign degree $1$ to each variable $x$. Let
\[ N^{V(\bd)} = \N^{V_1} \times \cdots \times \N^{V_N}. \]
Later, we will think of $\bd = (d_0, ..., d_N)$ as a dimension vector of a type $\A_{N+1}$ quiver (with relations), and each $r_k$ will be a rank condition on the matrix $X_k$ associated to the arrow $a_k$ in a representation of this quiver. Extend the orders on the $V_i$, to a monomial order on $\N^{V(\bd)}$ using the \emph{reverse lexicographic order} on each $\N^{V_i}$ and the \emph{lexicographic product order} on $\N^{V(\bd)}$. Now, we define a partial order on finite subsets of $\N$ by
\[ \{a_1 < \cdots < a_s\} \leq \{b_1 < \cdots < b_t\} \iff s \leq t, \ \text{and} \ a_i \geq b_i, i=1, ....,s, \]
then define a partial order on the $V_k$ by
\[ \la A|B \ra \leq \la C|D \ra \iff A \leq C \ \text{and} \ B\leq D.\]

This is then extended to the lexicographic product order on each $V_k$. This can be visualized as a sequence of lattices, each representing the partial order on each $V_k$, a vertex being a monomial, and a path connecting points of each lattice. If two paths are of equal length and differ at some point, then the first place they differ determines the "lower path" in the order. Obviously since monomials can be of arbitrary length the complete lattice for each $V_k$ cannot be drawn, but for every monomial (which is necessarily finite) the lattice of elements above it in the partial order may be drawn. 

\begin{ex}
If \[X = \begin{pmatrix}
x_{11} & x_{12} & x_{13} \\
x_{21} & x_{22} & x_{23}
\end{pmatrix},\]
then the linear order on the minors is
\[ \la 2|3 \ra < \la 2|2 \ra < \la 2|1 \ra < \la 1|3 \ra < \la 1|2 \ra < \la 1|1 \ra  \]
\[ < \la 1,2|2,3 \ra < \la 1,2|1,3 \ra < \la 1,2|1,2 \ra
\]
which can be represented as a column of nine vertices with arrows connecting elements immediately lower in the order. The partial order on the minors is given by
\medskip
\[
\begin{tikzcd}
 &  & {\langle 1,2|1,2 \rangle} \arrow[d] &  \\
 &  & {\langle 1,2|1,3 \rangle} \arrow[ld] \arrow[rd] &  \\
 &  \langle 1|1 \rangle \quad \arrow[ld] \arrow[rd] &  & {\langle 1,2|2,3 \rangle} \arrow[ld] \\
\langle 2|1 \rangle \quad \arrow[rd] &  & \langle 1|2 \rangle \arrow[ld] \arrow[rd] &  \\
 & \langle 2|2 \rangle \quad \arrow[rd] &  & \quad \langle 1|3 \rangle \arrow[ld] \\
 &  & \langle 2|3 \rangle & 
\end{tikzcd}
\]
\medskip
\end{ex}

\begin{ex}\label{2wedge of A}
As another example, let 
\[ A = \begin{pmatrix}{x}_{1}&
      {x}_{4}&
      {x}_{7}&
      {x}_{10}&
      {x}_{13}\\
      {x}_{2}&
      {x}_{5}&
      {x}_{8}&
      {x}_{11}&
      {x}_{14}\\
      {x}_{3}&
      {x}_{6}&
      {x}_{9}&
      {x}_{12}&
      {x}_{15}\\
      \end{pmatrix} \]
Then we have 
\[\left(\bigwedge^2(A)\right)^t = \begin{pmatrix}-{x}_{2} {x}_{4}+{x}_{1} {x}_{5}&
      -{x}_{3} {x}_{4}+{x}_{1} {x}_{6}&
      -{x}_{3} {x}_{5}+{x}_{2} {x}_{6}\\
      -{x}_{2} {x}_{7}+{x}_{1} {x}_{8}&
      -{x}_{3} {x}_{7}+{x}_{1} {x}_{9}&
      -{x}_{3} {x}_{8}+{x}_{2} {x}_{9}\\
      -{x}_{5} {x}_{7}+{x}_{4} {x}_{8}&
      -{x}_{6} {x}_{7}+{x}_{4} {x}_{9}&
      -{x}_{6} {x}_{8}+{x}_{5} {x}_{9}\\
      -{x}_{2} {x}_{10}+{x}_{1} {x}_{11}&
      -{x}_{3} {x}_{10}+{x}_{1} {x}_{12}&
      -{x}_{3} {x}_{11}+{x}_{2} {x}_{12}\\
      -{x}_{5} {x}_{10}+{x}_{4} {x}_{11}&
      -{x}_{6} {x}_{10}+{x}_{4} {x}_{12}&
      -{x}_{6} {x}_{11}+{x}_{5} {x}_{12}\\
      -{x}_{8} {x}_{10}+{x}_{7} {x}_{11}&
      -{x}_{9} {x}_{10}+{x}_{7} {x}_{12}&
      -{x}_{9} {x}_{11}+{x}_{8} {x}_{12}\\
      -{x}_{2} {x}_{13}+{x}_{1} {x}_{14}&
      -{x}_{3} {x}_{13}+{x}_{1} {x}_{15}&
      -{x}_{3} {x}_{14}+{x}_{2} {x}_{15}\\
      -{x}_{5} {x}_{13}+{x}_{4} {x}_{14}&
      -{x}_{6} {x}_{13}+{x}_{4} {x}_{15}&
      -{x}_{6} {x}_{14}+{x}_{5} {x}_{15}\\
      -{x}_{8} {x}_{13}+{x}_{7} {x}_{14}&
      -{x}_{9} {x}_{13}+{x}_{7} {x}_{15}&
      -{x}_{9} {x}_{14}+{x}_{8} {x}_{15}\\
      -{x}_{11} {x}_{13}+{x}_{10} {x}_{14}&
      -{x}_{12} {x}_{13}+{x}_{10} {x}_{15}&
      -{x}_{12} {x}_{14}+{x}_{11} {x}_{15}\\
      \end{pmatrix}\]

\end{ex}

We follow Pragacz and Weyman in \cite{PW} to define a set of \textbf{standard monomials} in $\N^{V(\bd)}$. Let $m = m_1 \cdots m_N \in \N^{V(\bd)}$ ($m_i \in \N^{V_i}$). Write $m_i = m_{i1}> \cdots > m_{is_i}$, so that $m_{ij} = \la A_{ij}|B_{ij} \ra_i$, and $m_{i1} \geq \cdots \geq m_{is_i}$. Now let $t(A_i) = (A_{i1}, ..., A_{is_i})$, and
\[ t(B_i) =
\begin{cases}
\emptyset      & \quad \text{if } i=n \ \text{and} \ B_{Nj}=\{1,...,d_N\} \ \text{for} \ j=1,...,s_N\\
((B_{Ns_N})^c, ..., (B_{Np_N})^c)  & \quad \text{if} \  i=n \ \text{and}\ p_N = \min\{j:[1,d_N] \neq B_{Nj} \\
((B_{is_i})^c, ..., (B_{i1})^c) & \quad \text{if} \ i \neq N
\end{cases}
\]
and define the tableau for $m$ as
\[ t(m) = \begin{pmatrix}
\emptyset & t(B_1) & t(B_2) & \cdots & t(B_{n-1}) & t(B_n) \\
t(A_1) & t(A_2) & t(A_3) & \cdots & t(A_n) & \emptyset 
\end{pmatrix}. \]

\begin{defn}
	A monomial $m \in \N^{V(\bd)}$ will be a \textbf{standard monomial} if $t(w)$ is a standard multitableau, and if the elements of $V^{\max}(\bd) = \{\la A|B \ra_k: 1 \leq k \leq N, |A|=|B|=r_k \}$ do not divide $m$. 
\end{defn}

\begin{remark}
	Let $\lambda(B_i)$ denote the Young diagram associated to $t(B_i)$. Computing the entries of $t(B_i)$ amounts to computing the image of the equivariant isomorphism
	\[ \bigwedge^{\lambda(B_i)}F_i^* \to \bigwedge^{\lambda(B_i)^*}F_i \]
	where $\lambda(B_i)^*$ is the "\textbf{dual partition}", or the complementary partition in the $|\lambda(B_i)| \times \bd(i)$ rectangle. For notation see \S \ref{Young Tableaux}, and \cite{W}. 
\end{remark}

\begin{ex}
	Let $\bd = (2,5,3)$, $r = (r_1, r_2) = (2,3)$, and let $m = \la 1,2|1,4 \ra_1 \la 2|3 \ra_1 \la 1,3|2,3 \ra_2$. Then
	\[ t(m) = \left( \ 
	\young(12,2),\  \young(1245,235,13), \ \young(13) \right). \]
	Since this is not a standard multi-tableau, $m$ is not a standard monomial. 
\end{ex}

\begin{ex}
	Suppose again that $\bd = (2,5,3)$, $r = (r_1, r_2) = (2,3)$, and let $m = \la 1,2|1,4 \ra_1 \la 2|3 \ra_1 \la 2,3|2,3 \ra_2$. Thus, 
	\[ t(m) = \left( \ 
	\young(12,2),\  \young(1245,235,23), \ \young(23) \right). \]
	This is a standard multitableau, but $\la 1,2|1,4 \ra_1|m$ and $\la 1,2|1,4 \ra_1 \in V^{\max}(\bd)$, thus $m$ is still not a standard monomial. If we take $m = \la 2|3 \ra_1 \la 2,3|2,3 \ra_2$, then $m$ is standard. 
\end{ex}

\begin{defn}
Now, as in \ref{BE multipliers}, we will need the Buchsbaum-Eisenbud multipliers. Recall, these can be described by minors as follows. Let $A \subset \{1, ..., b_{k-1}\}$ such that $|A| = r_k$, and let 
\[ M_k = \{ \la A \ra_k : \ A \subset \{1, ..., b_{k-1}\}, \ |A|=r_k\} \]
be a set of indeterminates. We have from Section \ref{BE multipliers} that there are unique $\la A \ra_k \in R$ such that
\[ \la A|E \ra -\sgn(E^c,E) \la A \ra_k \la E^c \ra_{k+1} = 0 \in R \]
for all $E \subset \{1, ..., b_k\}$, $|E|=r_k$. These are precisely the \emph{Buchsbaum-Eisenbud multipliers} for the complex $\cF_{\bullet}$ of format $(b_0, ..., b_N)$ with ranks $r = (r_1, ..., r_N)$. 
\end{defn}
\begin{theorem}
(\cite{T1} Proposition $1.3$): The following expressions are equal to $0 \in R$,
\begin{enumerate}
\item \[ \sum_{(C \cap A) \subset \Gamma \subset (C-D)}^{|\Gamma|=q} \sgn(A,C-\Gamma) \sgn(C-\Gamma, \Gamma) \sgn(\Gamma, D) \cdot \la A \cup (C-\Gamma) \ra_k \la \Gamma \cup D \ra_k \]
\item \[ \sum_{(C \cap A) \subset \Gamma \subset (C-E)}^{|\Gamma|=q} \sgn(A,C-\Gamma) \sgn(C-\Gamma, \Gamma) \sgn(\Gamma, E) \cdot \la A \cup (C-\Gamma) \ra_k \la \Gamma \cup E| F  \ra_k \]
\item \[ \sum_{\Gamma \subset \{1, ..., b_k\}-(\Lambda \cup H \cup K)}^{|\Gamma|=t} \sgn(H,\Gamma) \sgn(\Gamma, K) \la G| H \cup \Gamma \ra_k \la \Gamma \cup K \ra_{k+1} .\]
\end{enumerate}
Here we have $A, C, D, E, G \subset \{1, ..., b_{k-1}\}$, $F, H, K, \Lambda \subset \{1,...,b_k\}$, and
\[ |A|=r_k-p \quad |F|=s \leq r_k \quad |G| = m \leq r_k \]
\[ |D| = r_k-q \quad |E|=s-q \quad |H|=m-t \]
\[ |C| = p+q \geq r_k+1 \quad |K|=r_{k+1}-t \quad |\Lambda| < t \leq \min\{m, r_{k+1}\} \]
\end{theorem}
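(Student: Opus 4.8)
The plan is to prove all three identities by reducing to a single generic computation and then recognising each one as a classical incidence relation. Every displayed expression is a polynomial identity with integer coefficients in the entries of the matrices $X_1,\dots,X_N$ together with the Buchsbaum--Eisenbud multipliers $\langle A\rangle_k$, so by Bruns' generic exactification it is enough to verify it for the generic free resolution $R_{com}$; and since $R_{com}$ is reduced --- indeed a domain after passing to a minimal prime --- it suffices to check the identities on a dense open subset of (each component of) $\Spec R_{com}$, equivalently over the fraction field. First I would recall from the defining relation $\langle A|E\rangle=\sgn(E^c,E)\,\langle A\rangle_k\langle E^c\rangle_{k+1}$ the meaning of the multipliers: since $\bigwedge^{r_k}d_k$ has rank one, the $r_k\times r_k$ minors $\langle A|E\rangle$ of $d_k$ all become proportional to a single covector, and the factorisation says precisely that $\langle A\rangle_k$ is the $A$-th Plücker coordinate of the rank-$r_k$ submodule $\im d_k\subseteq F_{k-1}$, while $\langle B\rangle_{k+1}$ is the $B$-th Plücker coordinate of $\ker d_k=\im d_{k+1}\subseteq F_k$ --- the sign $\sgn(E^c,E)$ being the one produced by the orientation isomorphism $\bigwedge^{r_k}F_k^{*}\cong\bigwedge^{r_{k+1}}F_k$ discussed in the Remark above. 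This interpretation is valid over $R_{com}$ exactly because there the complex is generically split.

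Granting this, each identity is a known relation. Expression (1) is a generalised Grassmann--Plücker relation among the Plücker coordinates $\langle\cdot\rangle_k$ of the \emph{single} subspace $\im d_k$: the shuffle-sum over $\Gamma\subseteq C$ with the overlap hypothesis $|C|=p+q\ge r_k+1$ is exactly the form in which such a quadratic relation is guaranteed to hold. Expression (3) encodes the sacred equation $d_k\circ d_{k+1}=0$: the columns of $d_{k+1}$, and their exterior products, lie in $\ker d_k$, so every determinant assembled from rows $G$ of $d_k$ and the appropriate minors of $d_{k+1}$ must vanish; Laplace-expanding that vanishing determinant and then replacing each $r_{k+1}\times r_{k+1}$ minor of $d_{k+1}$ by $\pm\langle\cdot\rangle_{k+1}\langle\cdot\rangle_{k+2}$ produces the displayed sum. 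Expression (2) is the mixed case: one takes a classical Sylvester/Laplace identity among the minors of the single matrix $d_k$ involving one $r_k$-minor and several $s$-minors ($s\le r_k$), and substitutes the factorisation for the $r_k$-minor. In each case, after conjugating $d_k$ (and $d_{k+1}$) into the split normal form in which it is the identity on an $r_k$-dimensional summand and zero on the complement, the multipliers take only the values $0$ and $\pm1$ and the identity collapses to a finite sign computation; $\GL(F_{k-1})\times\GL(F_k)\times\GL(F_{k+1})$-equivariance then transports it back to arbitrary $d_k$, $d_{k+1}$.

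The hard part will be the sign bookkeeping --- propagating the products $\sgn(A,C-\Gamma)\sgn(C-\Gamma,\Gamma)\sgn(\Gamma,D)$, and their analogues in (2) and (3), through the Laplace expansions and through the two passages between $\bigwedge^{r}P$ and $\bigwedge^{r'}P^{*}$ --- together with the degenerate case, flagged in the parenthetical remarks earlier, in which the ranks $r_k+r_{k+1}$ do not fill up $b_k$: there the complex is only generically exactified, the factorisation $\langle A|E\rangle=\pm\langle A\rangle_k\langle E^c\rangle_{k+1}$ acquires correction terms, and one must argue inside $R_{com}$ that the extra contributions cancel summand by summand. A cleaner, sign-free route --- the one closest to \cite{T1} --- is to observe that each expression is a $\GL(F_{k-1})\times\GL(F_k)\times\GL(F_{k+1})$-equivariant element of $R_{com}$ of a fixed weight and to show, using the De Concini--Eisenbud--Procesi filtration $\bigoplus_\lambda S_\lambda\otimes S_\lambda$ of the coordinate ring from \cite{DEP}, that the corresponding isotypic component is zero; I would use this to cross-check the explicit sign computation rather than to replace it.
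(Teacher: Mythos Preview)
The paper does not give its own proof of this theorem; it is simply quoted from \cite{T1}. What the paper \emph{does} supply, in the very next result (also cited from \cite{T1}, Lemma~1.8), is precisely the interpretation you discovered on your own: after passing to the total ring of fractions $\Frac(R)$, the multipliers $\langle A\rangle_k$ are the maximal minors of a $b_{k-1}\times r_k$ matrix $y^k$ presenting the free module $\im(x^k)\otimes\Frac(R)$, and relation~(1) is exactly the Pl\"ucker relation on $y^k$. So your strategy---localise, interpret the multipliers as Pl\"ucker coordinates of $\im d_k$, and read off (1) as Pl\"ucker relations, (3) as the incidence relation coming from $\im d_{k+1}\subseteq\ker d_k$, and (2) as a Laplace/Sylvester identity in the minors of a single $d_k$---is exactly the route indicated by the paper and by Tchernev.

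Two small corrections to your write-up. First, the detour through Bruns' $R_{com}$ and the claim that it is reduced (``a domain after passing to a minimal prime'' is true of any reduced ring, so that phrase adds nothing) is unnecessary and in fact delicate: the structure of $R_{com}$ is subtle, and the paper never asserts it is a domain. The cleaner route, and the one the paper actually records, is to work directly over $\Frac(R)$ under the mild hypothesis that each $I_{r_i}(x^i)$ contains a nonzerodivisor; the identities are then between elements of $R$ that agree in $\Frac(R)$, hence in $R$. Second, your worry that the factorisation $\langle A|E\rangle=\sgn(E^c,E)\langle A\rangle_k\langle E^c\rangle_{k+1}$ ``acquires correction terms'' when $r_k+r_{k+1}<b_k$ is misplaced in this setting: in the paper (and in \cite{T1}) that equation is the \emph{defining} relation for the multipliers, and the format under discussion has $b_k=r_k+r_{k+1}$, so $|E^c|=r_{k+1}$ on the nose. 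Your sketch for (3) also wanders: you do not need to introduce $\langle\cdot\rangle_{k+2}$ at all---the relation is directly between the $m$-minors of $d_k$ and the Pl\"ucker coordinates $\langle\cdot\rangle_{k+1}$ of $\im d_{k+1}\subseteq F_k$, expressing that this subspace is annihilated by $d_k$.
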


\begin{theorem}
(\cite{T1} Lemma $1.8$): Let $x^i$ be a matrix in the complex of free modules once a basis is fixed,
\[
\xymatrix{
R^{b_0} & R^{b_{1}} \ar[l]_{x^{1}} & \cdots \ar[l]_{x^2} & R^{b_{n-1}} \ar[l]_{x^{n-1}} & R^{b_n}  \ar[l]_{x^n}& \ar[l] 0
}
\]
and denote by $\im(x^i)$ the image of $x^i$ in $R^{b_{i-1}}$. Let $\Frac(R)$ be the total ring of fractions of $R$. Assuming each ideal of minors $I_{r_i}(x^i)$ contains an $R$-regular element, we have that
\[ \im(x^i) \otimes_R \Frac(R) \]
is free of rank $r_i$ with free $\Frac(R)$-basis so that for all $A \subset \{1, ..., b_{i-1}\}$, $|A|=r_i$, the Buchsbaum-Eisenbud multiplier $\la A \ra_i \in R$ is the maximal minor $\la A| 1, ..., r_i \ra_{y^i}$, of the $b_{i-1} \times r_i$ matrix $y^i$ of the map
\[ \im(x^i) \otimes_R \Frac(R) \to R^{b_{i-1}} \otimes_R \Frac(R).\]
Moreover, the relations
\[ 
\sum_{(C \cap A) \subset \Gamma \subset (C-D)}^{|\Gamma|=q} \sgn(A,C-\Gamma) \sgn(C-\Gamma, \Gamma) \sgn(\Gamma, D) \cdot \la A \cup (C-\Gamma) \ra_k \la \Gamma \cup D \ra_k
\]
are the Pl\"{u}cker relations on $y^i$ as discussed in \ref{Plucker relations}. 
\end{theorem}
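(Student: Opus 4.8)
The plan is to do everything over the fraction field $K=\Frac(R)$ --- since $R$ is a domain, its total ring of fractions is a field --- and to recognize the Buchsbaum--Eisenbud multipliers as the Pl\"{u}cker coordinates of a point of a Grassmannian. First I would base-change the complex along the flat inclusion $R\hookrightarrow K$, so that $\im(x^i)\otimes_R K=\im(x^i\otimes_R K)$. The hypothesis that $I_{r_i}(x^i)$ contains an $R$-regular (hence nonzero) element says that $I_{r_i}(x^i)$ extends to the unit ideal of $K$, i.e.\ some $r_i\times r_i$ minor of $x^i$ becomes invertible in $K$; since $r_i=\rank(x^i)$, every $(r_i+1)\times(r_i+1)$ minor of $x^i$ vanishes. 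Hence $x^i\otimes_R K\colon K^{b_i}\to K^{b_{i-1}}$ has rank exactly $r_i$, and $W_i:=\im(x^i)\otimes_R K$ is an $r_i$-dimensional $K$-subspace of $K^{b_{i-1}}$, in particular a free $K$-module of rank $r_i$. This settles the first clause.

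To produce the claimed basis, recall the multipliers $\la A\ra_i$ exist and are unique (Section \ref{BE multipliers}). Since $I_{r_i}(x^i)\neq 0$, the defining relation
\[ \la A\,|\,E\ra_{x^i}-\sgn(E^c,E)\,\la A\ra_i\,\la E^c\ra_{i+1}=0 \]
shows that some $\la E^c\ra_{i+1}$ is nonzero in $K$ --- otherwise every $r_i\times r_i$ minor of $x^i$ would vanish --- and likewise that the $\la A\ra_i$ are not all zero. Fix a column set $E\subset\{1,\dots,b_i\}$ with $|E|=r_i$ and $\la E^c\ra_{i+1}\neq 0$, and let $x^i_E$ be the $b_{i-1}\times r_i$ submatrix of $x^i$ on the columns $E$, whose maximal minors are exactly the $\la A\,|\,E\ra_{x^i}$. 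The relation rewrites, over $K$, as
\[ \la A\ra_i=\frac{\sgn(E^c,E)}{\la E^c\ra_{i+1}}\;\la A\,|\,E\ra_{x^i}\qquad(|A|=r_i), \]
so the vector $(\la A\ra_i)_A$ is a nonzero $K$-multiple of the maximal-minor vector of $x^i_E$; in particular the columns of $x^i_E$ are $K$-linearly independent and hence form a basis of $W_i$. Taking $y^i$ to be $x^i_E$ with its first column scaled by $\sgn(E^c,E)/\la E^c\ra_{i+1}\in K^\times$, the matrix $y^i$ represents the inclusion $W_i\hookrightarrow K^{b_{i-1}}$ in that basis and its maximal minors are $\sgn(E^c,E)^2\la A\ra_i=\la A\ra_i$ (as $\sgn(E^c,E)=\pm1$). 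This is the main clause --- and, a posteriori, these minors computed over $K$ lie in $R$.

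For the "moreover" clause, $W_i$, now presented by $y^i$, is a $K$-point of the Grassmannian $\Gr(r_i,b_{i-1})$ whose Pl\"{u}cker coordinates are, by the previous paragraph, exactly the $\la A\ra_i$. Every point of a Grassmannian satisfies the Pl\"{u}cker relations defining $\Gr(r_i,b_{i-1})\hookrightarrow\PP\bigl(\bigwedge^{r_i}\bigr)$ (see \S\ref{presentation spaces} and Example \ref{Plucker relations}); in their standard quadratic form among the maximal minors of a matrix these are precisely expression (1) of the preceding theorem, with $\la A\cup(C-\Gamma)\ra_k$ and $\la\Gamma\cup D\ra_k$ read off as minors of $y^i$. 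Hence (1) is exactly the Pl\"{u}cker relations on $y^i$.

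The main obstacle is the reassembly in the second paragraph: one must be certain that the row-indexed multipliers of the earlier definition really do collect into the maximal minors of one matrix representing $W_i$ --- equivalently, that any auxiliary column set $E$ with $\la E^c\ra_{i+1}\neq 0$ automatically has $x^i_E$ of full rank (which is exactly what the displayed proportionality delivers, once one knows the $\la A\ra_i$ are not all zero) --- and that the signs $\sgn(E^c,E)$, which enter via the orientation identifications $\bigwedge^{r_i}F_i^*\cong\bigwedge^{r_{i+1}}F_i$ behind the definition of the multipliers, genuinely cancel. An alternative that avoids this bookkeeping is to work from the commuting square defining the multipliers: over $K$, $\bigwedge^{r_i}(x^i\otimes K)$ has one-dimensional image $\bigwedge^{r_i}W_i$ and factors as $a_i\circ a_{i-1}^*$, so $a_i(1)$ spans $\bigwedge^{r_i}W_i$; the coordinates of $a_i(1)$ are the $\la A\ra_i$ while the coordinates of $w_1\wedge\cdots\wedge w_{r_i}$ are the maximal minors of $[\,w_1\,|\,\cdots\,|\,w_{r_i}\,]$ for any $K$-basis $(w_j)$ of $W_i$, so the two vectors are proportional, and rescaling one basis vector completes the argument.
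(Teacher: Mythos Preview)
The paper does not give its own proof of this statement: it is quoted verbatim as Lemma~1.8 of \cite{T1} and immediately followed by a new definition, so there is nothing in the paper to compare your argument against line by line.

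That said, your argument is sound in the paper's setting and matches the standard proof one finds in the literature. You correctly observe that, since here $R=\KK[x_1,\dots,x_n]$ is a domain, $\Frac(R)$ is a field, so $\im(x^i)\otimes_R\Frac(R)$ is automatically a subspace of $K^{b_{i-1}}$ of the right dimension; the defining relation $\la A|E\ra=\sgn(E^c,E)\,\la A\ra_i\,\la E^c\ra_{i+1}$ then exhibits the tuple $(\la A\ra_i)_A$ as a nonzero scalar multiple of the maximal-minor vector of any full-rank column submatrix $x^i_E$, and a single column rescaling produces the required $y^i$. The ``moreover'' clause is then immediate, since the Pl\"ucker relations hold for the maximal minors of any $b_{i-1}\times r_i$ matrix over a field, and relation~(1) of the preceding theorem is exactly the standard quadratic Pl\"ucker relation written in row-index form. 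Your alternative phrasing via the factorization $\bigwedge^{r_i}(x^i\otimes K)=a_i\circ a_{i-1}^*$ is in fact the cleaner route and avoids the sign bookkeeping you flag.

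One caveat worth recording: Tchernev's Lemma~1.8 is stated for a general Noetherian ring, where $\Frac(R)$ need not be a field. Your opening sentence leans on $R$ being a domain, which is fine for this paper but is a genuine simplification of the cited result. Over a general $R$ one argues instead that the hypothesis on $I_{r_i}(x^i)$ forces $x^i\otimes\Frac(R)$ to have a unit $r_i\times r_i$ minor, so its image is a direct summand of $\Frac(R)^{b_{i-1}}$ and hence free; the rest of your argument then goes through unchanged.
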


\begin{defn}\label{Standard Monomials}
	Now, define a homomorphism of $k$-algebras $\pi: R \to S$, where $R = k[x:x \in V(\bd)]$ and $S$ is the ring corresponding to 
	the quotient of $\Sym(V_0 \otimes V_{1}^* \oplus \cdots \oplus V_{n-1} \otimes F_n^*)$, by the relations induced by the representations
	\[ V_{i} \otimes V_{i+2}^* \hookrightarrow (V_{i} \otimes V_{i+1}^*) \otimes (V_{i+1} \otimes V_{i+2}^*) \]
	corresponding to the conditions $d_{i}d_{i+1} = 0$, and the representations
	\[ \bigwedge^{r_i+1}V_i \otimes \bigwedge^{r_i+1}V_{i+1}^* \]
	corresponding to the condition $\bigwedge^{r_i+1}d_i = 0$. The homomorphism is defined by sending $\la A|B \ra_k$ to the corresponding minor of $X_k \in \Hom(V_{k-1}, V_k)$. The map $\pi$ is surjective. The map $\pi$ maps the standard monomials bijectively to a free basis of $S$ as a $k$-vector space. This follows directly from the facts described in \S \ref{Straightening Law 1} and results of Pragacz and Weyman in \cite{PW}. If we let $\Sigma(\bd)$ be the set of \textbf{nonstandard monomials} then $\Sigma(\bd)$ is a monomial ideal in $\N^{V(\bd)}$.
\end{defn}

\begin{theorem}
	(Tchernev): $\Sigma(\bd)$ is the initial ideal of $I \subset k[X_1,...,X_n]$, where $I$ is the ideal generated by
	\[V_{i} \otimes V_{i+2}^* \hookrightarrow (V_{i} \otimes V_{i+1}^*) \otimes (V_{i+1} \otimes V_{i+2}^*) \]
	and 
	\[ \bigwedge^{r_i+1}V_i \otimes \bigwedge^{r_i+1}V_{i+1}^*.\]
\end{theorem}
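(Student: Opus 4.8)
The plan is to establish the two inclusions $\Sigma(\bd) \subseteq \init(I)$ and $\init(I) \subseteq \Sigma(\bd)$, the first by producing explicit elements of $I$ with prescribed leading monomials and the second by a multigraded Hilbert function comparison. All initial terms are taken with respect to the monomial order fixed above on $\N^{V(\bd)}$: reverse lexicographic on each factor $\N^{V_k}$ and the lexicographic product order across $V_1, \dots, V_N$. Recall from Definition \ref{Standard Monomials} that $\pi\colon R \to S$ is surjective, that $S$ is the quotient of a symmetric algebra by exactly the relations generating $I$, and that $\pi$ carries the standard monomials bijectively onto a $k$-basis of $S$; in particular $I \subseteq \ker\pi$, so there is a surjection $R/I \twoheadrightarrow S$.

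For $\Sigma(\bd) \subseteq \init(I)$ it is enough to handle the minimal monomial generators of the monomial ideal $\Sigma(\bd)$. Such a generator is either divisible by an element of $V^{\max}(\bd)$ --- that is, a maximal-size minor whose vanishing is imposed by the rank condition --- in which case it is a leading term of one of the generators $\bigwedge^{r_i+1} V_i \otimes \bigwedge^{r_i+1} V_{i+1}^*$ listed in the theorem; or it records the failure of the associated multitableau to be standard, whether inside a single factor $V_k \otimes V_{k+1}^*$ or across an adjacent pair. In the latter case the straightening law of Pragacz--Weyman \cite{PW}, together with Tchernev's identities for the relations coming from $d_i d_{i+1} = 0$ and $\bigwedge^{r_i+1} d_i = 0$ (Proposition $1.3$ and Lemma $1.8$ of \cite{T1}, quoted above), rewrites the offending monomial $m$ modulo $I$ as a $k$-linear combination of standard monomials that are strictly smaller in the chosen order; the resulting element of $I$ then has $m$ as its initial term. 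Hence every minimal generator of $\Sigma(\bd)$ lies in $\init(I)$.

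To pass to equality, observe that the very same straightening procedure expresses an arbitrary monomial, modulo $I$, in terms of standard monomials, so the images of the standard monomials span $R/I$; their images in the further quotient $S$ form a $k$-basis by Definition \ref{Standard Monomials}, which forces the standard monomials to be linearly independent already in $R/I$. Thus the standard monomials form a $k$-basis of $R/I$, and since the monomials lying outside the monomial ideal $\Sigma(\bd)$ are precisely the standard ones, $R/I$ and $R/\Sigma(\bd)$ have the same multigraded Hilbert function. As $R/I$ and $R/\init(I)$ also have the same Hilbert function (a standard property of initial ideals), the surjection $R/\Sigma(\bd) \twoheadrightarrow R/\init(I)$ induced by $\Sigma(\bd) \subseteq \init(I)$ is a surjection of multigraded vector spaces with equal finite dimension in every degree, hence an isomorphism, and therefore $\init(I) = \Sigma(\bd)$.

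The hard part is the leading-term bookkeeping underlying the second paragraph: one must check that, for this specific order, the shuffle/Pl\"ucker straightening relations, the relations $d_i d_{i+1} = 0$, and the rank-drop relations $\bigwedge^{r_i+1} d_i = 0$ can be applied in a sequence that always reduces to \emph{strictly smaller} standard monomials, and that the initial terms thereby produced exhaust the minimal generators of $\Sigma(\bd)$. Interleaving the reductions within a factor with those across adjacent factors, and keeping them compatible with the reverse-lexicographic-then-lexicographic-product order at each step, is precisely the combinatorial content carried by the Young-tableau machinery of \cite{PW} and by Tchernev's identities, and it is where essentially all of the work lies.
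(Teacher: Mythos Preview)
The paper does not actually prove this theorem; it is stated as a result of Tchernev \cite{T1} and left without proof, as befits the survey nature of the article. So there is no proof in the paper to compare your attempt against.

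That said, your outline is the standard two-step argument for identifying an initial ideal --- establish the inclusion $\Sigma(\bd)\subseteq\init(I)$ by exhibiting relations with the prescribed leading terms, then upgrade to equality by a Hilbert-function count --- and it is correct in spirit. The key input you invoke, that the standard monomials map bijectively to a $k$-basis of $S$, is precisely what the paper asserts in Definition~\ref{Standard Monomials} (citing \cite{PW}) immediately before the theorem, so your use of it is legitimate in context. Your final paragraph is honest: the genuine content is the verification that every straightening step (within a factor, across adjacent factors, and via the rank relations) strictly decreases the monomial in the chosen revlex/lex-product order, and you correctly defer this to \cite{PW} and \cite{T1} rather than claiming to have done it. This is essentially how Tchernev's own argument proceeds, so your sketch is faithful to the original even though the paper itself does not reproduce it.
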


We may study the singularities of $S$ via the combinatorics of simplicial ideals. Let $\Delta(\bd)$ be the simplicial complex with vertex set $V(\bd)$, and with faces $F \subset V(\bd)$ given by $m_F = \prod_{v \in F}v$, such that the product is standard. 

\begin{defn}
	A simplicial complex $\Delta$ of dimension $d$ is \textbf{constructible} if:
	\begin{enumerate}
		\item $\Delta$ is a simplex; or
		\item there exist proper $d$-dimensional constructible subcomplexes $\Delta_1, \Delta_2 \subset \Delta$ such that $\Delta_1 \cap \Delta_2$ is constructible of dimension $d-1$ and $\Delta_1 \cup \Delta_2 = \Delta$. 
	\end{enumerate}
\end{defn}

\begin{theorem}
	(Tchernev): The simplicial complex $\Delta(\bd)$ is constructible. 
\end{theorem}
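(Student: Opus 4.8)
The plan is to deduce this from the combinatorics of the poset governing the straightening law, following the method De Concini--Eisenbud--Procesi \cite{DEP} use for discrete Hodge algebras on wonderful posets. Put $P = V(\bd)\setminus V^{\max}(\bd)$, equipped with the order fixed in \S\ref{Bases and Standard Monomials}; then $\Delta(\bd)$ is the Stanley--Reisner complex of $\Sigma(\bd)$, whose faces are exactly the subsets $F\subseteq P$ with $\prod_{v\in F}v$ standard. I would argue by induction on $|P|$, framing the inductive hypothesis so that it applies to the standard-monomial complex of any wonderful poset with strictly fewer elements. The engine is the vertex decomposition: for a vertex $v$,
\[ \Delta(\bd) = \mathrm{del}_{\Delta}(v)\cup \overline{\mathrm{star}}_{\Delta}(v),\qquad \mathrm{del}_{\Delta}(v)\cap\overline{\mathrm{star}}_{\Delta}(v) = \mathrm{link}_{\Delta}(v), \]
with $\overline{\mathrm{star}}_{\Delta}(v) = v*\mathrm{link}_{\Delta}(v)$ a cone, together with the elementary lemma that a cone $v*\Gamma$ is constructible whenever $\Gamma$ is --- proved by induction on the constructibility of $\Gamma$, since $v*(\text{simplex})$ is a simplex and $v*(\Gamma_1\cup\Gamma_2)=(v*\Gamma_1)\cup(v*\Gamma_2)$ with $(v*\Gamma_1)\cap(v*\Gamma_2)=v*(\Gamma_1\cap\Gamma_2)$, all dimensions going up by one.

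First I would take $v$ to be a maximal element of $P$ (say, the largest minor of submaximal size in the order fixed in \S\ref{Bases and Standard Monomials}). Because $v$ is maximal, $P\setminus\{v\}$ is an order ideal and a squarefree monomial on $P$ avoiding $v$ is standard iff it is standard for $P\setminus\{v\}$; hence $\mathrm{del}_{\Delta}(v)$ is the standard-monomial complex of $P\setminus\{v\}$. Dually, a squarefree monomial divisible by $v$, once divided by $v$, is a standard monomial of the poset contracted at $v$ (the localization of the Hodge algebra at $v$), so $\mathrm{link}_{\Delta}(v)$ is the standard-monomial complex of that contracted poset. Both have strictly fewer vertices, so, provided they again lie in the class over which the induction runs, they are constructible by the induction hypothesis, and then so is $\overline{\mathrm{star}}_{\Delta}(v)$ by the cone lemma.

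The decisive point --- and the step I expect to be the main obstacle --- is precisely that $P=V(\bd)\setminus V^{\max}(\bd)$ is a wonderful poset in the sense of \cite{DEP}, and that wonderfulness is preserved under passing to an order ideal and under contraction at a maximal element; this is exactly what guarantees that $\mathrm{del}_{\Delta}(v)$ and $\mathrm{link}_{\Delta}(v)$ are again standard-monomial complexes of the required type, and it is what the straightening combinatorics of Pragacz--Weyman \cite{PW} (as recorded in the theorems above, together with the description of the Buchsbaum--Eisenbud multipliers and their Pl\"ucker relations) should supply. Granting this, the remaining dimension bookkeeping is routine. The complex $\Delta(\bd)$ is pure --- this is part of the wonderful-poset package, and is consistent with the Cohen--Macaulayness of $k[\Delta(\bd)]=R/\Sigma(\bd)$ discussed earlier --- so if $\Delta(\bd)$ is not already a simplex (the base case), every vertex cannot lie in every facet, and one may choose the maximal vertex $v$ above to lie in some but not all facets of the top dimension $d$. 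Then $\mathrm{del}_{\Delta}(v)$ and $\overline{\mathrm{star}}_{\Delta}(v)$ are proper subcomplexes of dimension $d$ (using $\overline{\mathrm{star}}_\Delta(v)=v*\mathrm{link}_\Delta(v)$ contains a $d$-facet through $v$, while $\mathrm{del}_\Delta(v)$ contains a $d$-facet missing $v$), while $\mathrm{link}_{\Delta}(v)$, being constructible hence pure and containing a $(d-1)$-face, has dimension $d-1$. This exhibits the required decomposition and closes the induction.
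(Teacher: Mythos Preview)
The paper does not actually supply a proof of this theorem: it is stated as a result of Tchernev, attributed to \cite{T1}, and the text moves immediately to \S9 (Examples) with no argument given. There is therefore nothing in the paper to compare your proposal against.

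Your outline is a plausible sketch of the De Concini--Eisenbud--Procesi strategy, and you are candid that the real content is the claim that $P=V(\bd)\setminus V^{\max}(\bd)$ is a wonderful poset whose wonderfulness is stable under passing to order ideals and under contraction at a maximal element. That is exactly where a full proof (as in \cite{T1}) does the work, and you should not expect it to fall out of the statements recorded in this survey; the Pragacz--Weyman and Tchernev references must be consulted directly. One small caution on the dimension bookkeeping: you first fix $v$ to be a maximal element of $P$, and only afterwards ask that $v$ lie in some but not all top-dimensional facets. In a poset with a unique maximal element those two requirements conflict, so you should either argue that $P$ has several maximal elements (which is true here, since the $V_k$ for different $k$ are disjoint and each contributes its own top minors), or reorder the choice so that you first pick a facet-avoiding vertex and then note it can be taken maximal in the relevant order ideal.
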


\section{Examples}
\begin{ex}
	Suppose that we have a product of three varieties of complexes, represented by the following diagram:
	\[ \xymatrix{ \bullet_1 \ar@/^/@{.>}[r]  \ar[r] \ar@/_/@{~>}[r] & \bullet_2 \ar@/^/@{.>}[r]  \ar[r] \ar@/_/@{~>}[r] & \bullet_3 \ar@/^/@{.>}[r]  \ar[r] \ar@/_/@{~>}[r] & \bullet_4 } \]
	Let us denote the "colors" by the set $\{1, 2, 3\}$ (ordered from top to bottom). Let $\{a_1, a_2, a_3\}$ be the arrows of color "$1$", labeled from left to right. Similarly, label arrows of color "$2$" by $\{b_j\}_{j=1, 2, 3}$, and arrows of color "$3$" by $\{c_k\}_{k=1, 2, 3}$. Take the dimension vector $\bd = (2,4,5,3)$, and rank maps $r_1 = (2,2,3) = r_2, r_3 = (1,3,2) $. 
	
	The associated graded object for the coordinate ring is
	\begin{align*}
	\gr\left(\KK[A_i, B_j, C_k]_{i,j,k =1, 2, 3} / I\right) &= S_{(\lambda(a_1))}V_1 \otimes S_{(\lambda(a_2),-\lambda(a_1))}V_2 \otimes 
	S_{(\lambda(a_3), -\lambda(a_2))}V_3 \otimes S_{(-\lambda(a_3))}V_4 \\
	& \otimes  S_{(\mu(b_1))}V_1 \otimes S_{(\mu(b_2),-\mu(b_1))}V_2 \otimes S_{(\mu(b_3), 
		-\mu(b_2))}V_3 \otimes S_{(-\mu(b_3))}V_4 \\
	& \otimes  S_{(\nu(c_1))}V_1 \otimes S_{(\nu(c_2),-\nu(c_1))}V_2 \otimes S_{(\nu(c_3), 
		-\nu(c_2))}V_3 \otimes S_{(-\nu(c_3))}V_4 .
	\end{align*}
	Using the dimension vector $(2, 4, 5, 3)$ and the rank sequences $r_1 = (2,2,3) = r_2$, and $r_3 = (1,3,2)$, then the maps $\lambda, \mu, \nu:Q_1 \to \mathcal{P}$, which assign a partition (or Young diagram) to each arrow, are restricted to partitions such that $\lambda(a_i)$ and $\mu(b_j)$ have no more than $r_1(i) = r_2(j)$ (for $i=j$) parts. Similarly, $\nu(c_k)$ is restricted to partitions which have no more than $r_3(k)$ parts. 
	
	Let $m_1 = \la 3|2 \ra_1^1 \la 2|2 \ra_1^1 \la 3|4 \ra_2^1 \la 2,3|2,4 \ra_3^1 \in k[\rep_{Q, 1}(\bd_1, r_1)]$, $m_2 = \la 2|2 \ra_2^2 \la 1,2|1,2 \ra_3^2 \in k[\rep_{Q, 2}(\bd_2, r_2)]$, and $m_3 = \la 2, 3 | 2, 3 \ra_2^3 \la 1,2|2,3 \ra_3^3  \in k[\rep_{Q, 3}(\bd_3, r_3)]$. Then the monomial $m_1 \otimes m_2 \otimes m_3 \in k[\rep_{Q, c}(\bd, r)]$ corresponds to the product of multitableaux, 
	\[ \left(\young(2,3),\  \young(134,134,3), \ \young(1235,23),\ \young(13) \right) \times \left(\emptyset ,\  \young(2), \ \young(1345,12), \ \young(3) \right) \times \left(\emptyset,\  \young(23), \ \young(145,12), \ \young(1) \right). \]
	This is \emph{not standard} since $m_2$ and $m_3$ are not standard, thus this is an element of $\Sigma(\bd) = \Sigma(\bd_1) + \Sigma(\bd_2) + \Sigma(\bd_3)$. In the partial order we have defined, it is larger than the monomial
	\[ \la 3|2 \ra_1^1 \la 2|2 \ra_1^1 \la 3|4 \ra_2^1 \la 2,3|2,4 \ra_3^1 \otimes  \la 2|3 \ra_2^2 \la 1,2|1,2 \ra_3^2 \otimes \la 2, 3 | 2, 3 \ra_2^3 \la 1,2|2,3 \ra_3^3,\]
	which differs in only one place from the $m_2$ factor. 
\end{ex}

\section{Applications: Ising Model Hamiltonian}

From Xanadu's \emph{Pennylane} software tutorials for quantum computing \cite{Xanadu}, consider the class of qubit Hamiltonians that are quadratic, meaning that the terms of the Hamiltonian represent either interactions between two qubits, or the energy of individual qubits. This class of Hamiltonians is naturally described by graphs, with second-order terms between qubits corresponding to weighted edges between nodes, and first-order terms corresponding to node weights. A well known example of a quadratic Hamiltonian is the transverse-field Ising model, which is defined as

\[
H_{\text{Ising}}(\mathbf{\theta}) = \sum_{(i,j) \in E} \theta_{i,j}^{(1)}Z_i \otimes Z_j + \sum_{i \in V} \theta_i^{(2)}Z_i + \sum_{i \in V}X_i  
\] 

where $\mathbf{\theta} = \{ \theta^{(1)}, \theta^{(2)} \}$. In this Hamiltonian, the set $E$ that determines which pairs of qubits have $Z_i \otimes Z_j$ interactions can be represented by the set of edges for some graph. With the qubits as nodes, this graph is called the interaction graph. The $\theta^{(1)}$ parameters correspond to the edge weights and the $\theta^{(2)}$ parameters correspond to weights on the nodes. If we allow a time varying interaction graph (the $\mathbf{\theta}$ terms) we have a way of applying topological data analysis (persistent homology) to the interaction graph. As the time parameter increases, we apply an edge dropout to edges below a given interaction threshold, to be thought of as a second parameter, time being the first. If three nodes are connected to each other with sufficient strength, we fill in the face of the triangle. If four or more are connected, we fill in the tetrahedron, etc. This provides us with a two parameter persistent homology model with time and bonding strength as our two parameters, and with the interaction graph associated to the time varying Hamiltonian. The software package \href{https://github.com/rivetTDA}{\textbf{Rivet}} built for $2$-parameter persistent homology (see \cite{Lesnick-Wright-et al.}) can then be used to determined various data such as the presentations of the persistence modules discussed in \S \ref{presentation spaces}

As an application of the theory developed thus far we propose a way of defining a Hamiltonian Ansatz for arbitrary materials by defining a multiscale, $2$-parameter Hamiltonian Ansatz for an arbitrary material via persistence homology given by a dynamic DB-Scan (density based scan) algorithm related to a proposal presented in \cite{WXZ} and \cite{AJLMWX}. This method will work for a useful scale-dependent definition of a Hamiltonian and a definition of entanglement that can be adjusted for emergent topological, structural, and dynamic patterns, properties, and features of the material at different scales. 

In the quantum computing version of this problem, using arbitrary controlled-U gates in place of controlled-X (CNOT) gates to entangle qubits we get a more complex variety of entangled states. The tunable controlled-U gate parameters can be interpreted as a form of distance between qudits. Using energy level of qudits, with a notion of distance between two qudits defined in terms of energy level as a second parameter and using a dynamic DB-scan with each of these two parameters gives a way of computing 2-parameter persistent homology. The persistent homology allows us to understand emergent geometric and topological properties of the quantum system.

%------------------------------------------------------------------------------------------
%
%------------------------------------------------------------------------------------------
\section{Appendix: Macaulay2 Code}\label{Appendix}

\textcolor{darkred}{\hrule}
\subsection{Example \ref{BE multipliers 1}}

\begin{lstlisting}
i1 : R=ZZ[x_1..x_15, y_1..y_10];

i2 : A=genericMatrix(R,x_1,3,5);

             3       5
o2 : Matrix R  <--- R

i3 : B=genericMatrix(R,y_1,5,2);

             5       2
o3 : Matrix R  <--- R

i4 : A




o4 = | x_1 x_4 x_7 x_10 x_13 |
     | x_2 x_5 x_8 x_11 x_14 |
     | x_3 x_6 x_9 x_12 x_15 |

             3       5
o4 : Matrix R  <--- R

i5 : B

o5 = | y_1 y_6  |
     | y_2 y_7  |
     | y_3 y_8  |
     | y_4 y_9  |
     | y_5 y_10 |

             5       2
o5 : Matrix R  <--- R

i6 : A*B

o6 = | x_1y_1+x_4y_2+x_7y_3+x_10y_4+x_13y_5 x_1y_6+
															  x_4y_7+x_7y_8+x_10y_9+x_13y_10 |
     | x_2y_1+x_5y_2+x_8y_3+x_11y_4+x_14y_5 x_2y_6+
     														  x_5y_7+x_8y_8+x_11y_9+x_14y_10 |
     | x_3y_1+x_6y_2+x_9y_3+x_12y_4+x_15y_5 x_3y_6+
     														  x_6y_7+x_9y_8+x_12y_9+x_15y_10 |

             3       2
o6 : Matrix R  <--- R

i7 : exteriorPower(3,A)

o7 = | -x_3x_5x_7+x_2x_6x_7+x_3x_4x_8-x_1x_6x_8-x_2x_4x_9+x_1x_5x_9 -
	x_3x_5x_10+x_2x_6x_10+x_3x_4x_11-x_1x_6x_11-x_2x_4x_12+x_1x_5x_12
     ----------------------------------------------------------------------
     --------------------------------------------------------------------
     -x_3x_8x_10+x_2x_9x_10+x_3x_7x_11-x_1x_9x_11-x_2x_7x_12+x_1x_8x_12 -
     x_6x_8x_10+x_5x_9x_10+x_6x_7x_11-x_4x_9x_11-x_5x_7x_12+x_4x_8x_12
     ----------------------------------------------------------------------
     --------------------------------------------------------------------
     -x_3x_5x_13+x_2x_6x_13+x_3x_4x_14-x_1x_6x_14-x_2x_4x_15+x_1x_5x_15 -
     x_3x_8x_13+x_2x_9x_13+x_3x_7x_14-x_1x_9x_14-x_2x_7x_15+x_1x_8x_15
     ----------------------------------------------------------------------
     --------------------------------------------------------------------
     -x_6x_8x_13+x_5x_9x_13+x_6x_7x_14-x_4x_9x_14-x_5x_7x_15+x_4x_8x_15
     --------------------------------------------------------------------------
     ----------------------------------------------------------------
     -x_3x_11x_13+x_2x_12x_13+x_3x_10x_14-x_1x_12x_14-x_2x_10x_15+x_1x_11x_15
     --------------------------------------------------------------------------
     ----------------------------------------------------------------
     -x_6x_11x_13+x_5x_12x_13+x_6x_10x_14-x_4x_12x_14-x_5x_10x_15+x_4x_11x_15
     --------------------------------------------------------------------------
     ----------------------------------------------------------------
     -x_9x_11x_13+x_8x_12x_13+x_9x_10x_14-x_7x_12x_14-x_8x_10x_15+x_7x_11x_15 |

             1       10
o7 : Matrix R  <--- R

i8 : exteriorPower(2,B)

o8 = | -y_2y_6+y_1y_7  |
     | -y_3y_6+y_1y_8  |
     | -y_3y_7+y_2y_8  |
     | -y_4y_6+y_1y_9  |
     | -y_4y_7+y_2y_9  |
     | -y_4y_8+y_3y_9  |
     | -y_5y_6+y_1y_10 |
     | -y_5y_7+y_2y_10 |
     | -y_5y_8+y_3y_10 |
     | -y_5y_9+y_4y_10 |

             10       1
o8 : Matrix R   <--- R
-------------------------------------------------------------
\end{lstlisting}

\medskip
\textcolor{darkred}{\hrule}
\medskip

\subsection{Exactification Example \ref{Exactify1}}\label{Exactify1 Code}
\begin{flushleft}
\begin{lstlisting}
i1 : R=ZZ[x_1..x_8]

o1 = R

o1 : PolynomialRing

i2 : M=genericMatrix(R,x_1,2,4)

o2 = | x_1 x_3 x_5 x_7 |
     | x_2 x_4 x_6 x_8 |

             2       4
o2 : Matrix R  <--- R

i3 : I=minors(M)
stdio:3:3:(3): error: no method found for applying minors to:
     argument   :  | x_1 x_3 x_5 x_7 | (of class Matrix)
                   | x_2 x_4 x_6 x_8 |

i4 : I=minors(2,M)

o4 = ideal (- x x  + x x , - x x  + x x , - x x  + x x , - x x  + x x , - x x  + x x , - x x  + x x )
               2 3    1 4     2 5    1 6     4 5    3 6     2 7    1 8     4 7    3 8     6 7    5 8

o4 : Ideal of R

i5 : res(I)

      1      6      8      4      1
o5 = R  <-- R  <-- R  <-- R  <-- R  <-- 0
                                         
     0      1      2      3      4      5

o5 : ChainComplex

i6 : F=res(I)

      1      6      8      4      1
o6 = R  <-- R  <-- R  <-- R  <-- R  <-- 0
                                         
     0      1      2      3      4      5

o6 : ChainComplex

i7 : F.dd








o7 = 
              1                                             6
	 0 : R  <----------------------------------------- R  : 1
               | -x_2x_3+x_1x_4 -x_2x_5+x_1x_6 -x_4x_5+x_3x_6 
               -x_2x_7+x_1x_8-x_4x_7+x_3x_8 -x_6x_7+x_5x_8 |

          6                                             8
     1 : R  <----------------------------------------- R  : 2
               {2} | x_6  x_5  x_8  x_7  0    0    0    0    |
               {2} | -x_4 -x_3 0    0    0    0    x_8  x_7  |
               {2} | x_2  x_1  0    0    x_8  x_7  0    0    |
               {2} | 0    0    -x_4 -x_3 0    0    -x_6 -x_5 |
               {2} | 0    0    x_2  x_1  -x_6 -x_5 0    0    |
               {2} | 0    0    0    0    x_4  x_3  x_2  x_1  |

          8                                             4
     2 : R  <----------------------------------------- R  : 3
               {3} | -x_8 -x_7 0    -x_2x_7+x_1x_8 |
               {3} | 0    -x_8 -x_7 -x_2x_8        |
               {3} | x_6  x_5  0    x_2x_5-x_1x_6  |
               {3} | 0    x_6  x_5  x_2x_6         |
               {3} | x_2  x_1  0    0              |
               {3} | 0    x_2  x_1  x_2^2          |
               {3} | -x_4 -x_3 0    -x_2x_3+x_1x_4 |
               {3} | 0    -x_4 -x_3 -x_2x_4        |

          4                    1
     3 : R  <---------------- R  : 4
               {4} | x_1  |
               {4} | -x_2 |
               {4} | 0    |
               {5} | 1    |

          1
     4 : R  <----- 0 : 5
               0

o7 : ChainComplexMap

i8 : d1=F.dd_1

o8 = | -x_2x_3+x_1x_4 -x_2x_5+x_1x_6 -x_4x_5+x_3x_6 -x_2x_7
			+x_1x_8 -x_4x_7+x_3x_8 -x_6x_7+x_5x_8 |

             1       6
o8 : Matrix R  <--- R

i9 : d2=F.dd_2

o9 = {2} | x_6  x_5  x_8  x_7  0    0    0    0    |
     {2} | -x_4 -x_3 0    0    0    0    x_8  x_7  |
     {2} | x_2  x_1  0    0    x_8  x_7  0    0    |
     {2} | 0    0    -x_4 -x_3 0    0    -x_6 -x_5 |
     {2} | 0    0    x_2  x_1  -x_6 -x_5 0    0    |
     {2} | 0    0    0    0    x_4  x_3  x_2  x_1  |

             6       8
o9 : Matrix R  <--- R

i10 : d3=F.dd_3

o10 = {3} | -x_8 -x_7 0    -x_2x_7+x_1x_8 |
      {3} | 0    -x_8 -x_7 -x_2x_8        |
      {3} | x_6  x_5  0    x_2x_5-x_1x_6  |
      {3} | 0    x_6  x_5  x_2x_6         |
      {3} | x_2  x_1  0    0              |
      {3} | 0    x_2  x_1  x_2^2          |
      {3} | -x_4 -x_3 0    -x_2x_3+x_1x_4 |
      {3} | 0    -x_4 -x_3 -x_2x_4        |

              8       4
o10 : Matrix R  <--- R

i11 : d4=F.dd_4

o11 = {4} | x_1  |
      {4} | -x_2 |
      {4} | 0    |
      {5} | 1    |

              4       1
o11 : Matrix R  <--- R

i17 : S0=ZZ[y_1..y_90];

i20 : D1=genericMatrix(S0,y_1,1,6)

o20 = | y_1 y_2 y_3 y_4 y_5 y_6 |

               1        6
o20 : Matrix S0  <--- S0

i21 : D2=genericMatrix(S0,y_7,6,8)

o21 = | y_7  y_13 y_19 y_25 y_31 y_37 y_43 y_49 |
      | y_8  y_14 y_20 y_26 y_32 y_38 y_44 y_50 |
      | y_9  y_15 y_21 y_27 y_33 y_39 y_45 y_51 |
      | y_10 y_16 y_22 y_28 y_34 y_40 y_46 y_52 |
      | y_11 y_17 y_23 y_29 y_35 y_41 y_47 y_53 |
      | y_12 y_18 y_24 y_30 y_36 y_42 y_48 y_54 |

               6        8
o21 : Matrix S0  <--- S0

i22 : D3=genericMatrix(S0,y_55,8,4)

o22 = | y_55 y_63 y_71 y_79 |
      | y_56 y_64 y_72 y_80 |
      | y_57 y_65 y_73 y_81 |
      | y_58 y_66 y_74 y_82 |
      | y_59 y_67 y_75 y_83 |
      | y_60 y_68 y_76 y_84 |
      | y_61 y_69 y_77 y_85 |
      | y_62 y_70 y_78 y_86 |

               8        4
o22 : Matrix S0  <--- S0

i23 : D4=genericMatrix(S0,y_87,4,1)

o23 = | y_87 |
      | y_88 |
      | y_89 |
      | y_90 |

               4        1
o23 : Matrix S0  <--- S0

i24 : I12=D1*D2

o24 = | y_1y_7+y_2y_8+y_3y_9+y_4y_10+y_5y_11+y_6y_12
	  y_1y_13+y_2y_14+y_3y_15+y_4y_16+y_5y_17+y_6y_18 y_1y_19
	 +y_2y_20+y_3y_21+y_4y_22+y_5y_23+y_6y_24
      -----------------------------------------------------------
      y_1y_25+y_2y_26+y_3y_27+y_4y_28+y_5y_29+y_6y_30 
      y_1y_31+y_2y_32+y_3y_33+y_4y_34+y_5y_35+y_6y_36 y_1y_37
      +y_2y_38+y_3y_39+y_4y_40+y_5y_41+y_6y_42
      -----------------------------------------------------------
      y_1y_43+y_2y_44+y_3y_45+y_4y_46+y_5y_47+y_6y_48 
      y_1y_49+y_2y_50+y_3y_51+y_4y_52+y_5y_53+y_6y_54 |

               1        8
o24 : Matrix S0  <--- S0

i25 : I23=D2*D3



               6        4
o25 : Matrix S0  <--- S0

i26 : I34=D3*D4

o26 = | y_55y_87+y_63y_88+y_71y_89+y_79y_90 |
      | y_56y_87+y_64y_88+y_72y_89+y_80y_90 |
      | y_57y_87+y_65y_88+y_73y_89+y_81y_90 |
      | y_58y_87+y_66y_88+y_74y_89+y_82y_90 |
      | y_59y_87+y_67y_88+y_75y_89+y_83y_90 |
      | y_60y_87+y_68y_88+y_76y_89+y_84y_90 |
      | y_61y_87+y_69y_88+y_77y_89+y_85y_90 |
      | y_62y_87+y_70y_88+y_78y_89+y_86y_90 |

               8        1
o26 : Matrix S0  <--- S0


i30 : exteriorPower(3,D3)



               56        4
o30 : Matrix S0   <--- S0
\end{lstlisting}
\end{flushleft}

\medskip
\textcolor{darkred}{\hrule}
\medskip

\subsection{Example \ref{2wedge of A}}

\begin{lstlisting}
----------------------------------------------------
i1 : R=ZZ[x_1..x_15, y_1..y_10];

i2 : A=genericMatrix(R,x_1,3,5);

             3       5
o2 : Matrix R  <--- R
i3 : transpose(exteriorPower(2,A))




o3 =  {-2} | -x_2x_4+x_1x_5     -x_3x_4+x_1x_6     -x_3x_5+x_2x_6     |
      {-2} | -x_2x_7+x_1x_8     -x_3x_7+x_1x_9     -x_3x_8+x_2x_9     |
      {-2} | -x_5x_7+x_4x_8     -x_6x_7+x_4x_9     -x_6x_8+x_5x_9     |
      {-2} | -x_2x_10+x_1x_11   -x_3x_10+x_1x_12   -x_3x_11+x_2x_12   |
      {-2} | -x_5x_10+x_4x_11   -x_6x_10+x_4x_12   -x_6x_11+x_5x_12   |
      {-2} | -x_8x_10+x_7x_11   -x_9x_10+x_7x_12   -x_9x_11+x_8x_12   |
      {-2} | -x_2x_13+x_1x_14   -x_3x_13+x_1x_15   -x_3x_14+x_2x_15   |
      {-2} | -x_5x_13+x_4x_14   -x_6x_13+x_4x_15   -x_6x_14+x_5x_15   |
      {-2} | -x_8x_13+x_7x_14   -x_9x_13+x_7x_15   -x_9x_14+x_8x_15   |
      {-2} | -x_11x_13+x_10x_14 -x_12x_13+x_10x_15 -x_12x_14+x_11x_15 |

              10       3
o3 : Matrix R   <--- R
----------------------------------------------------
\end{lstlisting}

\medskip
\textcolor{darkred}{\hrule}
\medskip

%------------------------------------------------------------------------------------------
%Bibliography
%------------------------------------------------------------------------------------------

\end{document}